\newtheorem{thm}{Theorem}[section]
\newtheorem{Theorem}[thm]{Theorem}
\newtheorem{Lemma}[thm]{Lemma}
\newtheorem{Proposition}[thm]{Proposition}
\newtheorem{Corollary}[thm]{Corollary}
\theoremstyle{definition}
\newtheorem{Definition}[thm]{Definition}
\newtheorem{question}[thm]{Question}
\newtheorem{Example}[thm]{Example}
\newtheorem{Remark}[thm]{Remark}
\newlist{propenum}{enumerate}{1}
\setlist[propenum]{label=(\roman*), ref=\theProposition.(\roman*)}
\newlist{defenum}{enumerate}{1}
\setlist[defenum]{label=(\alph*), ref=\theDefinition.(\alph*)}
\newlist{lemenum}{enumerate}{1}
\setlist[lemenum]{label=(\roman*), ref=\theLemma.(\roman*)}
\newlist{remarksenum}{enumerate}{1}
\setlist[remarksenum]{label=(\roman*), ref=\theRemarks.(\roman*)}
\newcommand{\smallmat}[4]{\left(\begin{smallmatrix} #1 & #2 \\ #3 & #4\end{smallmatrix}\right)}
\newcommand*\isomarrow{%
	\xrightarrow{\raisebox{-0.35em}{\smash{\ensuremath{\sim}}}}
}  
\newcommand{\xisomarrow}[1]{%
	\xrightarrow[\raisebox{0.3em}{\smash{\ensuremath{\sim}}}]{#1}
}
\newcommand{\wh}{\widehat}
\newcommand{\LSD}{\mathrm{LSD}}
\newcommand{\pf}{{\tt}}
\newcommand{\xtofrom}[1]{\mathbin{\xleftrightarrow{#1}}}
\newcommand{\isotofrom}{\xtofrom\sim}
\tikzset{
	labelrotate/.style={anchor=south, rotate=90, inner sep=.5mm}}
\newcommand{\N}{\mathbb{N}}
\newcommand{\Z}{\mathbb{Z}}
\newcommand{\Q}{\mathbb{Q}}
\newcommand{\C}{\mathbb{C}}
\newcommand{\B}{\mathbb{B}}
\renewcommand{\O}{\mathcal{O}}
\newcommand{\G}{\mathbb{G}}
\newcommand{\Hom}{\operatorname{Hom}}
\newcommand{\Ext}{\operatorname{Ext}}
\newcommand{\End}{\operatorname{End}}
\newcommand{\IEnd}{\operatorname{\underline{End}}}
\newcommand{\Aut}{\operatorname{Aut}}
\newcommand{\GL}{\operatorname{GL}}
\newcommand{\Perf}{\operatorname{Perf}}
\newcommand{\Spa}{\operatorname{Spa}}
\newcommand{\id}{{\operatorname{id}}}
\newcommand{\cts}{{\operatorname{cts}}}
\newcommand{\an}{{\mathrm{an}}}
\newcommand{\et}{{\operatorname{\acute{e}t}}}
\newcommand{\proet}{{\operatorname{pro\acute{e}t}}}
\newcommand{\profet}{{\operatorname{prof\acute{e}t}}}
\newcommand{\HT}{\operatorname{HT}}
\newcommand{\comp}{\mathbin{\circ}}
\newcommand{\Pic}{\operatorname{Pic}}
\newcommand{\Higgs}{\operatorname{Higgs}}
\newcommand{\wt}{\widetilde}
\newcommand{\rk}{\mathrm{rk}}
\renewcommand{\wh}{\widehat}
\renewcommand{\lim}{\varprojlim}
\newcommand{\cH}{{\ifmmode \check{H}\else{\v{C}ech}\fi}}
\renewcommand{\tt}{\mathrm{tt}}
\newcommand{\aeq}{\stackrel{a}{=}}
\renewcommand{\O}{\mathcal O}
\newcommand{\tensor}{\mathbin{\otimes}}
\newcommand{\wtOm}{\widetilde{\Omega}}
\renewcommand{\diamond}{\diamondsuit}
\newcommand{\xto}[1]{\mathbin{\xrightarrow{#1}}} 
\newcommand{\injto}{\mathrel{\hookrightarrow}}
\newcommand{\surjto}{\mathrel{\twoheadrightarrow}}
\DeclareMathOperator{\HOM}{\mathscr{H}\text{\kern -3pt {\calligra\large om}}\,}
\DeclareMathOperator{\EXT}{\mathscr{E}\text{\kern -3pt {xt}}\,}
\newcommand{\doublewidetilde}[1]{{%
		\mathpalette\double@widetilde{#1}%
}}
\newcommand{\double@widetilde}[2]{%
	\sbox\z@{$\m@th#1\widetilde{#2}$}%
	\ht\z@=.85\ht\z@
	\widetilde{\box\z@}%
}
\let\old@widetilde\doublewidetilde
\begin{document}
	\title{The $p$-adic Corlette--Simpson correspondence for abeloids}
	\author{Ben Heuer, Lucas Mann, Annette Werner}
	\date{}
	\maketitle
	\begin{abstract}
		For an abeloid variety $A$ over a complete algebraically closed field extension $K$ of $\Q_p$, we construct a $p$-adic Corlette--Simpson correspondence, namely an equivalence between finite-dimensional continuous $K$-linear representations of the Tate module and a certain subcategory of the Higgs bundles on $A$.  To do so, our central object of study is the category of vector bundles for the $v$-topology on the diamond associated to $A$. We prove that any pro-finite-\'etale $v$-vector bundle can be built from pro-finite-\'etale $v$-line bundles and unipotent $v$-bundles. To describe the latter,  we extend the theory of universal vector extensions to the $v$-topology and use this to generalise a result of Brion by relating unipotent $v$-bundles on abeloids to representations of vector groups.
	\end{abstract}
	\centerline{\small \textbf{2020 MSC:} 14K15, 14G45, 14G22
	}
	\setcounter{tocdepth}{2}

	\tableofcontents

	\section{Introduction}
	Let $K$ be a complete algebraically closed field extension of $\Q_p$. The goal of this article is to prove the following version of a $p$-adic Corlette--Simpson theorem for abeloid varieties, i.e.\ connected smooth proper commutative rigid group varieties over $K$ (see \cref{rslt:Simpson-correspondence}):
	
	\begin{Theorem} \label{rslt:intro-Simpson-correspondence-vers2}
		Let $A$ be an abeloid variety over $K$. Then there is an equivalence of categories
		\begin{align*}
			\Bigg\{\begin{array}{@{}c@{}l}\text{finite-dimensional continuous  }\\
				\text{$K$-linear representations of $\pi_1(A,0)$}\end{array}\Big\}\cong  \Bigg\{\begin{array}{@{}c@{}l}\text{ pro-finite-\'etale}\\
				\text{Higgs bundles on $A$}\end{array}\Bigg\}.
		\end{align*}
		This is canonical after choices of an exponential on $K$ and of a splitting of the Hodge--Tate sequence of $A$.
	\end{Theorem}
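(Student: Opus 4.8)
The plan is to factor the claimed equivalence through the category $\VBpf(A)$ of pro-finite-\'etale $v$-vector bundles on (the diamond associated to) $A$, by producing two equivalences
\[
\{\text{reps.\ of }\pi_1(A,0)\}\;\cong\;\VBpf(A)\;\cong\;\{\text{pro-finite-\'etale Higgs bundles on }A\}
\]
and composing them. For the first equivalence I would use the universal pro-finite-\'etale cover $\wt A\to A$: a continuous finite-dimensional $K$-representation of $\pi_1(A,0)$ descends along the pro-$\pi_1$-torsor $\wt A\to A$ to a $v$-bundle that is trivialized by $\wt A$, and conversely every pro-finite-\'etale $v$-bundle is essentially by definition of this form, so that passing to $v$-sections over $\wt A$ gives a quasi-inverse. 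I would check this is an exact tensor equivalence by $v$-descent, the main inputs being that $\wt A$ computes $\pi_1(A,0)$ and that $H^0(\wt A,\O)=K$.

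For the second equivalence I would reduce to the two classes of building blocks isolated earlier: every object of $\VBpf(A)$ is an iterated extension of bundles obtained by tensoring a pro-finite-\'etale $v$-line bundle with a unipotent $v$-bundle, and on the Higgs side I would set up the parallel filtration and match the pieces. In rank one, I would send a pro-finite-\'etale $v$-line bundle to a Higgs line bundle, reading off the Higgs field from the Hodge--Tate sequence of $A$; this is exactly where the chosen splitting of that sequence and the exponential on $K$ enter, as together they provide the data needed to pass from the multiplicative invariants computed by $\Pic$ to the additive Higgs datum. For the unipotent part I would invoke the $v$-topological theory of universal vector extensions together with the generalization of Brion's theorem, identifying unipotent $v$-bundles with representations of the relevant vector group and hence with unipotent Higgs bundles.

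The main obstacle is to show that the resulting functor is fully faithful and essentially surjective \emph{compatibly with extensions}, so that the two filtrations genuinely correspond and the equivalence propagates from the building blocks to all objects. Concretely, this amounts to identifying the groups $\Ext^1$ that govern extensions on each side---$v$-cohomology for $v$-bundles against Higgs (Dolbeault) cohomology for Higgs bundles---and verifying that the comparison map between them, built from the Hodge--Tate splitting, is an isomorphism preserving the pro-finite-\'etale condition in both directions. Establishing this $\Ext$-compatibility, rather than the equivalence on the individual line-bundle and unipotent pieces, is where I expect the real work to lie; once it is in place, exactness and tensor-compatibility of the equivalence should follow formally.
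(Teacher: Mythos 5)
Your overall skeleton --- factoring through pro-finite-\'etale $v$-vector bundles via \cref{rslt:profet-vb-vs-Reps}, then splitting the problem into the rank-one case and the unipotent case, with the exponential and the Hodge--Tate splitting entering exactly where you say --- is the paper's. But the step you yourself flag as ``where the real work lies'' is where your plan has a genuine gap. Matching abstract $\Ext^1$-groups on the two sides cannot by itself produce an equivalence of categories: isomorphisms of extension groups only assemble into an equivalence if they are induced by a functor that is already defined on all objects, and constructing that functor from its values on the building blocks is precisely the problem you are trying to solve. A devissage along filtrations also runs into well-definedness issues, since a pro-finite-\'etale bundle admits many filtrations. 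Indeed, no comparison between $v$-cohomology and Dolbeault cohomology across the correspondence appears anywhere in the paper's argument.

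The missing idea is that the decomposition is much stronger than ``iterated extension'': both categories split as \emph{direct sums of categories} indexed by isomorphism classes of pro-finite-\'etale line bundles, with no nonzero Homs and no nontrivial extensions between distinct blocks, and moreover each object is an honest direct sum $\bigoplus_i (U_i \tensor L_i)$ rather than a filtered object (\cref{rslt:vb-on-A-are-unipotent-times-lb}). On the $v$-side this rests on the vanishing $H^i_v(A,L)=0$ for every nontrivial pro-finite-\'etale $v$-line bundle $L$ (\cref{rslt:H^1_v(A-L)}, the analog of the classical vanishing of cohomology of nontrivial homogeneous line bundles on abelian varieties), which gives \cref{rslt:Ext^1(L_1-L_2)} and hence the categorical splitting \cref{rslt:decomposition-of-profet-vb-category}; on the Higgs side the analogous statement is \cref{rslt:no-Ext-of-Higgs-line-bundles}, giving \cref{rslt:decomposition-of-profet-Higgs-category}. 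Once these are in place the assembly is formal: the rank-one correspondence (\cref{rslt:Simpson-linebundles}) gives a bijection of the index sets, and the unipotent correspondence (\cref{rslt:Simpson-correspondence-for-unipotent}, proved via the universal vector extension and the Brion-type theorem \cref{rslt:characterize-tau-unipotent-VB-via-algebraic-reps}) gives an honest equivalence on each block, which automatically carries all extension-theoretic information because it is an equivalence of categories, not merely a bijection on objects together with Ext-isomorphisms. So the Ext-compatibility you expected to verify never has to be checked: all nontrivial extensions live within a single block, and within a block they are absorbed by the unipotent equivalence.
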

	Here pro-finite-\'etale Higgs bundles are defined as Higgs bundles such that the underlying vector bundle becomes trivial on a pro-finite-\'etale covering of $A$.

	To motivate the  theorem, we recall that the complex Corlette--Simpson correspondence for a compact Kähler manifold  $X$ provides a category equivalence between  finite-dimensional complex representations of the fundamental group of $X$ and semi-stable Higgs bundles on $X$ with vanishing Chern classes \cite{SimpsonCorrespondence}.

	In the past 15 years, there has been extensive work on establishing analogs of the complex Corlette--Simpson correspondence in the $p$-adic setting, i.e.\ for proper smooth varieties over $\C_p$ (or more generally for proper smooth rigid spaces over any complete algebraically closed extension $K$ of $\Q_p$): Faltings \cite{Faltings_SimpsonI} and later Abbes--Gros--Tsuji \cite{AGT} have established a correspondence between small Higgs bundles and small so-called generalised representations for smooth proper non-archimedean varieties with a toroidal model. Faltings also proves a result for curves without the smallness assumptions on both sides, but it seems that it is currently not known whether these assumptions can also be eliminated  in higher dimensions.

	Despite all advances, some of which we review below, a major question in the field remains open, namely the problem to determine which Higgs bundles correspond to genuine representations of the \'etale fundamental group: For algebraic varieties over $\C_p$, one might expect that numerical flatness plays a role. For rigid analytic varieties over general base fields,
	it seems that there is currently not even a precise conjecture for what the correct subcategory of Higgs bundles for such a $p$-adic Corlette--Simpson correspondence should be.
	
	\paragraph{The case of abeloid varieties.}
	The purpose of this paper is to solve the aforementioned problem in the case of abeloid varieties. This gives a first idea what kind of statement to expect in general.

	We note that \cref{rslt:intro-Simpson-correspondence-vers2} is closely analogous to the complex case: On complex abelian varieties, semi-stable bundles with vanishing Chern classes are precisely the homogeneous ones
	\cite[Theorem~2]{MehtaNori}. Since on an abelian variety $A$  over  $K = \C_p$ the pro-finite-étale vector bundles coincide with the homogeneous ones (see \cref{rem:homogeneous}), our result \cref{rslt:intro-Simpson-correspondence-vers2} in this case is indeed analogous to the complex Corlette--Simpson correspondence. However, interestingly, this immediate analogy breaks down if the base field is larger than $\C_p$, in which case being pro-finite-\'etale turns out to be stronger than being homogeneous.
	
	Our proof of \cref{rslt:intro-Simpson-correspondence-vers2} relies heavily on the machinery of the pro-\'etale site of Scholze's $p$-adic Hodge theory \cite{Scholze_p-adicHodgeForRigid}. More generally, it will be convenient to work in the framework of diamonds \cite{etale-cohomology-of-diamonds} and the $v$-site that comes with it. This allows us to reinterpret the representations on the left-hand side of the main result as vector bundles on the $v$-site, which are very closely related to Faltings' generalised representations. This  reduces \cref{rslt:intro-Simpson-correspondence-vers2} to a study of the category of $v$-vector bundles on $A$.
	
	\paragraph{$v$-Vector Bundles on Abeloid Varieties.} The category of (locally spatial) diamonds over $K$ is a large category of ``spaces'' which contains all smooth rigid $K$-varieties and all perfectoid spaces over $K$ as full subcategories and hence provides a convenient framework for our purposes. Diamonds come equipped with a topology (i.e.\ an analytic site) and an étale site, which match the respective sites of smooth rigid $K$-varieties, thereby allowing us to identify any smooth rigid $K$-variety with its associated diamond without losing any information. But they also provide us with the much finer $v$-site: For any locally spatial diamond $X$, the $v$-site $X_v$ is the site of all locally spatial diamonds over $X$, where coverings are jointly surjective maps satisfying some quasi-compactness hypothesis. The site $X_v$ comes equipped with a structure sheaf $\O_X$, which enables us to consider vector bundles on $X_v$:
	
	\begin{Definition}
		Let $X$ be a locally spatial diamond over $K$ (e.g.\ a smooth rigid $K$-variety).
		\begin{defenum}
			\item A \emph{$v$-vector bundle} on $X$ is a sheaf of $\O_X$-modules on $X_v$ which is $v$-locally isomorphic to $\O_X^n$ for some $n \in \Z_{\ge0}$.
			
			\item A $v$-vector bundle $E$ on $X$ is called \emph{pro-finite-étale} if it becomes trivial on a pro-finite-étale cover of $X$, i.e.\ a cofiltered inverse limit of finite \'etale covers.
		\end{defenum}
	\end{Definition}
	
	Via pullback along $X_v\to X_{\an}$, the category of $v$-vector bundles contains the category of classical vector bundles on $X_{\an}$ as a full subcategory. To avoid confusion, we will call objects of the latter category \emph{analytic} vector bundles.
	
	Let now $X$ be a connected proper smooth rigid $K$-variety and fix any point $x \in X(K)$. Then using the language of $v$-vector bundles, we get a new interpretation of the category of $K$-linear representations of $\pi_1(X,x)$, so in particular of the left-hand-side of \cref{rslt:intro-Simpson-correspondence-vers2}. Namely, the language of diamonds allows us to construct the universal pro-finite-étale cover
	\begin{align*}
		\wt X := \varprojlim_{X' \to X} X',
	\end{align*}
	where the limit is taken over all pairs $(X', x')$ of connected finite étale covers $X' \surjto X$ together with a point $x' \in X'(K)$ over $x$. Then $\wt X \to X$ is a $\pi_1(X, x)$-torsor and it follows (see \cref{rslt:profet-vb-vs-Reps}) that there is a natural equivalence of categories
	\begin{align*}
		\Bigg\{\begin{array}{@{}c@{}l}\text{pro-finite-\'etale  }\\
			\text{ $v$-vector bundles on $X$}\end{array}\Big\}\cong  \Bigg\{\begin{array}{@{}c@{}l}\text{ fin.-dim.\ continuous}\\
			\text{$K$-linear $\pi_1(X,x)$-rep.}\end{array}\Bigg\}.
	\end{align*}
	
	In this article, we study the category of $v$-vector bundles in the case that $X$ is an abeloid variety $A$. Abeloids are a rigid generalisation of abelian varieties that may be regarded as the $p$-adic analogs of complex tori. In close analogy with the description of homogeneous vector bundles on complex tori due to Matsushima and Morimoto \cite{Morimoto}\cite{Matsushima-Morimoto}, as well as on abelian varieties due to Miyanishi and Mukai \cite{Miyanishi}\cite[Theorem 4.17]{Mukai_semi-homog-VB_on_AV}, we prove the following structure result for pro-finite-\'etale $v$-vector bundles on abeloids
	(see \cref{rslt:vb-on-A-are-unipotent-times-lb}):
	
	\begin{Theorem} \label{rslt:intro-decomposition-of-profet-vb-vers2}
		Let $A$ be an abeloid variety over $K$. Then a $v$-vector bundle $E$ on $A$ is pro-finite-\'etale if and only if it decomposes as a direct sum
		\begin{align*}
			E = \bigoplus_{i=1}^n (U_i \tensor L_i),
		\end{align*}
		where each $U_i$ is a unipotent $v$-vector bundle (i.e.\ a successive extension of trivial bundles) and each $L_i$ is a pro-finite-\'etale line bundle on $A$.
		
		If $E$ is analytic, all $U_i$ and $L_i$ occuring in this decomposition are also analytic.
	\end{Theorem}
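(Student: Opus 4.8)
The plan is to reduce the statement, via the equivalence of \cref{rslt:profet-vb-vs-Reps}, to a decomposition of finite-dimensional continuous representations of the abelian profinite group $\pi_1(A,0)=\Tate(A)$, and then to transport this decomposition back to $v$-bundles.

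First I would record the structural properties of \cref{rslt:profet-vb-vs-Reps} that I intend to use: it is exact and symmetric monoidal, it carries $\O_A$ to the trivial representation, and under it unipotent $v$-bundles correspond to unipotent representations (iterated self-extensions of the trivial representation) and pro-finite-\'etale line bundles to continuous characters $\chi\colon\pi_1(A,0)\to K^\times$. Since $A$ is a commutative rigid group, $\pi_1(A,0)$ is abelian; concretely it is the Tate module $\Tate(A)$. Hence it suffices to show: a finite-dimensional continuous $K$-representation $V$ of an abelian profinite group $G$ decomposes as $V\cong\bigoplus_i(\chi_i\tensor W_i)$ with each $\chi_i$ a continuous character and each $W_i$ unipotent, and conversely every such sum is again such a representation.

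For the decomposition I would pass to the subalgebra $R:=K[\rho(G)]\subseteq\End_K(V)$ generated by the image of $\rho$. As $G$ is abelian this $R$ is commutative, and being finite-dimensional it is Artinian, so $R=\prod_i R_i$ is a finite product of local Artinian $K$-algebras; because $K$ is algebraically closed each residue field is $K$. The associated idempotents split $V=\bigoplus_iV_i$, and on $V_i$ the composite of $\rho$ with the residue map $R_i\surjto K$ is a continuous character $\chi_i$. Every element of the local ring $R_i$ has its residue as its unique eigenvalue, so $\chi_i(g)^{-1}\rho(g)$ is unipotent for all $g\in G$; thus $V_i\cong\chi_i\tensor W_i$ with $W_i:=(V_i,\chi_i^{-1}\rho)$ unipotent. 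Transporting this through the monoidal equivalence yields $E\cong\bigoplus_i(U_i\tensor L_i)$, which is the ``only if'' direction. For the converse, each $L_i$ is pro-finite-\'etale by hypothesis and each $U_i$, being an iterated extension of $\O_A$, is pro-finite-\'etale since that property is stable under extensions; monoidality and stability under direct sums then show that $\bigoplus_i(U_i\tensor L_i)$ is pro-finite-\'etale.

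It remains to treat the analytic refinement, which I expect to be the most delicate point. The idempotents $e_i\in R\subseteq\End_v(E)$ giving $E=\bigoplus_iE_i$ with $E_i=U_i\tensor L_i$ are canonical, so when $E$ is analytic it is enough to know that they are analytic; this follows from $\End_v(E)=\End_{\an}(E)$, i.e.\ from the agreement $H^0_v(A,-)=H^0_{\an}(A,-)$ of global sections on the proper smooth $A$ for analytic coefficients. Each $E_i$ is then analytic. To separate the two tensor factors, the $\m_i$-adic filtration of $R_i$ realises $L_i^{\oplus r_i}$, with $r_i=\rk E_i$, as the associated graded of a filtration of $E_i$ by analytic subbundles, hence analytic; since $\End_v(L_i)=H^0_v(A,\O)=K$, the standard matrix idempotents on $L_i^{\oplus r_i}$ are analytic and exhibit $L_i$ itself as an analytic line bundle, whence $U_i=E_i\tensor L_i^{-1}$ is analytic as well. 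The real content of the argument lies in the inputs assumed above --- the exactness and monoidality of \cref{rslt:profet-vb-vs-Reps}, the matching of unipotent bundles with unipotent representations, and the stability of the pro-finite-\'etale condition under extensions; granting these, the only genuinely new verification is that the canonical isotypic decomposition descends along $A_v\to A_{\an}$, which is exactly the comparison of $v$- and analytic global sections.
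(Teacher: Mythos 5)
Your argument for part (i) is correct and takes a genuinely different route from the paper. The paper works on the bundle side: it triangularizes the representation to exhibit $E$ as a successive extension of pro-finite-\'etale line bundles, and then regroups this extension into isotypic blocks using the vanishing $\Ext^1_v(L_1,L_2)=0$ for $L_1\ne L_2$ (\cref{rslt:Ext^1(L_1-L_2)}), which in turn rests on the cohomological computation $H^i_v(A,L)=0$ for non-trivial pro-finite-\'etale $L$ (\cref{rslt:H^1_v(A-L)}). Your decomposition via the commutative Artinian algebra $R=K[\rho(G)]$ and its local factors does the splitting entirely on the representation side by linear algebra, so for part (i) you avoid that cohomological input altogether; this is a clean simplification (modulo two points you should make explicit: that a continuous representation of the abelian group $TA$ by unipotent matrices is an iterated extension of trivial representations, i.e.\ commuting triangularization, and that ``pro-finite-\'etale is stable under extensions'' is not formal --- it is exactly the vanishing $H^1_v(\wt A,\O)=0$ of \cref{rslt:properties-of-wt-X}, which is how the paper proves that unipotent $v$-bundles are pro-finite-\'etale).

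The analytic refinement, however, has a genuine gap, and it is located precisely where you claim the argument is cheapest. Full faithfulness of $\nu^\ast$ (\cref{l:an-to-v-vb-fully-faithful}) does give that the idempotents $e_i$ are analytic, and images of \emph{idempotent} analytic endomorphisms are analytic bundles (direct summands of locally free modules are locally free, and $\nu^\ast$ preserves the resulting splitting), so each $E_i=U_i\tensor L_i$ is analytic. But the next step --- that the $\m_i$-adic filtration consists of ``analytic subbundles, hence'' has analytic graded pieces --- does not follow from the comparison of global sections. The subsheaf $\m_i E_i$ is not a direct summand; it is the image of finitely many analytic endomorphisms, and full faithfulness gives no control over whether this image (or the quotient $E_i/\m_i E_i$) is locally free on $A_\an$, nor whether its formation commutes with $\nu^\ast$. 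What is needed is exactly the paper's \cref{rslt:v-loc-free-implies-analytic-loc-free}: a coherent sheaf on $A_\an$ whose $v$-pullback is $v$-locally free is analytically locally free --- a non-trivial statement proved via splittings of toric towers and descent of finite projective modules along universally injective ring maps. With that lemma your argument can be repaired, and in fact simplified: you only need the single quotient $E_i/\m_i E_i$, whose analytic incarnation (the cokernel of the analytic map $E_i^{\oplus s}\to E_i$ given by a basis of $\m_i$) is coherent with $v$-locally free pullback $\cong L_i^{\oplus d}$, hence analytically locally free; your matrix-idempotent trick then extracts $L_i$, giving an alternative to the paper's appeal to \cref{rslt:analyticity-of-sub-LB-of-VB}. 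But your closing claim that ``the only genuinely new verification is the comparison of $v$- and analytic global sections'' is false as stated: descent of local freeness along $A_v\to A_\an$, not just of global sections, is an essential and independent ingredient.
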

	
	\paragraph{Proof Strategy.} We sketch the proof of \cref{rslt:intro-Simpson-correspondence-vers2}. Using \cref{rslt:intro-decomposition-of-profet-vb-vers2} we can reduce the proof to the following two cases:
	
	\begin{enumerate}
		\item Line bundles. This was done in previous work of the first author \cite{heuer-v_lb_rigid} (see \S\ref{sec:prelim-simpson-for-line-bundles}). 
		
		\item Unipotent objects on both sides, i.e.\ successive extensions of the trivial object. This part does not rely on the choice of an exponential.
	\end{enumerate}
	
	The proof of step 2 relies on the following analog of Brion's correspondence of unipotent vector bundles and representations of the vector group $H^1(A, \O_A)^* \tensor \mathbb G_a$  (where $-^{\ast}$ denotes the $K$-vector space dual) in the algebraic setting of abelian varieties \cite[\S3.3]{Brion_homogVB_AV} (see \cref{rslt:characterize-tau-unipotent-VB-via-algebraic-reps}).
	
	\begin{Theorem} \label{rslt:intro-Brion-vers2}
		Let $A$ be an abeloid variety over $K$ and let $\tau \in \{\an, v\}$. Then there is a natural equivalence of categories
		\begin{align*}
			\{ \text{alg.\ representations of $H^1_\tau(A,\O_A)^* \tensor_K \mathbb G_a$} \} \ \cong \ \{ \text{unipotent $\tau$-vector bundles on $A$} \}.
		\end{align*}
	\end{Theorem}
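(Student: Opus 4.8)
The plan is to realise the desired equivalence as descent along the universal vector extension, whose $v$-topological incarnation we have constructed above. Write $V := H^1_\tau(A,\O_A)^*$ and $W := V \tensor_K \mathbb{G}_a$, and recall the universal vector extension
\[
0 \to W \to E_\tau(A) \xto{\pi} A \to 0,
\]
an extension of group objects in the topology $\tau$ whose extension class is the identity in $\Ext^1_\tau(A, W) = V \tensor H^1_\tau(A, \O_A) = \End(H^1_\tau(A, \O_A))$. In particular $\pi$ is a torsor under the vector group $W$, so for every algebraic representation $M$ of $W$ we may form the associated bundle $\Phi(M) := E_\tau(A) \times^{W} M$, the descent of the trivial bundle $M \tensor \O_{E_\tau(A)}$ along the $W$-action. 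Since $W$ is unipotent, every representation is a successive extension of the trivial one, so $\Phi(M)$ is a unipotent $\tau$-vector bundle; moreover $\Phi$ is a $K$-linear tensor functor sending the trivial representation to $\O_A$. It remains to prove that $\Phi$ is an equivalence.

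The engine of both halves is the cohomological vanishing
\[
H^0_\tau(E_\tau(A), \O) = K, \qquad H^1_\tau(E_\tau(A), \O) = 0.
\]
To establish this I would compute $R\pi_* \O_{E_\tau(A)}$: as $\pi$ is a $W$-torsor with affine structure group, this pushforward is filtered with graded pieces $\operatorname{Sym}^n(V^*) \tensor \O_A$, and the boundary maps of the filtration are governed by the extension class of $E_\tau(A)$. The universal property — that this class is the identity of $\End(H^1_\tau(A,\O_A))$ — forces the first boundary map $V^* = H^0_\tau(A, V^*\tensor\O_A) \to H^1_\tau(A, \O_A)$ to be an isomorphism, and inductively that every copy of $H^1_\tau(A, \O_A)$ appearing in the Leray filtration is killed. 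This is precisely the defining feature of the universal vector extension, now carried out in the topology $\tau$.

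Granting the vanishing, essential surjectivity follows: $H^1_\tau(E_\tau(A), \O) = 0$ means $\Ext^1_\tau(\O, \O) = 0$ on $E_\tau(A)$, so every unipotent bundle there is trivial; in particular $\pi^* U$ is trivial for every unipotent $\tau$-bundle $U$ on $A$ (induct on the length of a unipotent filtration, splitting off one trivial quotient at a time, each splitting being controlled by $\Ext^1_\tau(\O,\O^r)=0$). Thus $U$ is trivialised by the $W$-torsor $\pi$ and hence descends to a representation of $W$ mapping to $U$ under $\Phi$. Full faithfulness is the computation
\[
\Hom_A(\Phi M, \Phi N) = \big(\Hom_K(M,N)\tensor H^0_\tau(E_\tau(A),\O)\big)^{W} = \Hom_K(M,N)^W = \Hom_W(M,N),
\]
where the first equality is descent along $\pi$ (a global $\O$-linear map between the trivialised pullbacks is $\Hom_K(M,N)\tensor H^0_\tau(E_\tau(A),\O)$, and descends iff it is $W$-invariant) and the second uses $H^0_\tau(E_\tau(A),\O) = K$ with its trivial $W$-action.

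The main obstacle is the input from the previous sections, namely constructing $E_\tau(A)$ and proving the vanishing $H^1_\tau(E_\tau(A),\O)=0$, $H^0_\tau(E_\tau(A),\O)=K$ in the $v$-topology. For $\tau = \an$ this is essentially classical and underlies Brion's argument, but for $\tau = v$ both the existence of $E_v(A)$ as an extension by $H^1_v(A,\O_A)^*\tensor\mathbb{G}_a$ and the pushforward computation for the $v$-torsor $\pi$ must be set up afresh, since $H^1_v(A,\O_A)$ is strictly larger than its analytic counterpart. Conceptually, the vanishing encodes Koszul duality between the exterior algebra $H^*_\tau(A,\O_A) = \bigwedge^\bullet H^1_\tau(A,\O_A)$ and the symmetric algebra $\operatorname{Sym}(V)$; this is the structural reason the Tannakian group of unipotent $\tau$-bundles is the abelian vector group $W$ rather than a larger (e.g.\ free) pro-unipotent group. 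Once the vanishing is available, the remainder of the argument is formal and uniform in $\tau$.
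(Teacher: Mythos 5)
Your functor $\Phi$ is the same one the paper uses, and your overall strategy is the natural transplant of Brion's algebraic argument: trivialize along the universal extension and descend. The fatal problem is that the cohomological linchpin is \emph{false} for $\tau = v$: one does \emph{not} have $H^1_v(E_v A, \O) = 0$. The reason is exactly the point where the $v$-topology differs from the Zariski topology: the fibres of $\pi\colon E_vA \to A$ form the \emph{rigid-analytic} vector group $W = H^1_v(A,\O)^*\tensor \G_a \cong \G_a^{2g}$, and rigid spaces are never $\O$-acyclic in the $v$-topology --- one has $H^q_v(\G_a^{N},\O) \cong H^0(\G_a^{N},\wtOm^q) \neq 0$ by $R^q\nu_*\O \cong \wtOm^q$ and quasi-Steinness. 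So $R\pi_{v*}\O$ is not (a completion of) a symmetric algebra: it has large higher direct images which your filtration with graded pieces $\operatorname{Sym}^n(V^*)\tensor\O_A$ ignores entirely. One can make the failure completely explicit using the paper's own description of $E_vA$ as the pushout $(\wt A^p \times W)/T_pA$ of the sequence $0 \to T_pA \to \wt A^p \to A \to 0$: Cartan--Leray for the pro-\'etale $T_pA$-torsor $\wt A^p \times W \to E_vA$, together with the acyclicity of $\wt A^p$ (\cref{rslt:p-adic-cover-O-acyclic}) and base change, gives $R\Gamma_v(E_vA,\O) = R\Gamma_{\cts}\big(T_pA, \bigoplus_q H^0(W,\wtOm^q_W)[-q]\big)$, where $T_pA$ acts by translation. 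On entire functions the fibrewise difference operators are surjective with kernel the constants (identity theorem), so $R\Gamma_{\cts}(T_pA,\O(W)) = K$ in degree $0$; the spectral sequence then degenerates and the translation-invariant fibrewise one-forms survive:
\[
H^1_v(E_vA,\O) \;\cong\; H^0(W,\wtOm^1_W)^{T_pA} \;\cong\; H^1_v(A,\O)(-1) \;\neq\; 0.
\]
(The one-dimensional model of this phenomenon is the ``$p$-adic cylinder'' $\G_a/\Z_p$, where the invariant differential gives a nonzero class in $H^1_v$.) In particular $\Ext^1_v(\O,\O) \neq 0$ on $E_vA$, so there \emph{do} exist non-trivial unipotent $v$-bundles on $E_vA$, and your essential-surjectivity step collapses. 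Even for $\tau=\an$, where the vanishing may well be true, your filtration claim needs repair: fibrewise functions are entire rather than polynomial, so $\pi_{\an*}\O$ is a Fr\'echet completion of $\bigoplus_n \operatorname{Sym}^n(V^*)\tensor\O_A$, and Koszul exactness must be re-proved after completion; likewise your full-faithfulness computation needs $H^0(E_\tau A,\O)=K$, which is true but requires an argument.

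It is worth noting what survives and how the paper routes around the obstruction. The statement you actually need is not that all unipotent bundles on $E_vA$ are trivial, but only that $\pi^*U$ is trivial for $U$ unipotent on $A$ --- and this \emph{is} true, but the only visible proof is to write down the trivializing coboundary explicitly: the representation $\rho\colon T_pA \to \GL_n(K)$ attached to $U$ (via \cref{rslt:unipotent-trivial-on-wtA}) is unipotent, $\log\rho$ extends $K$-linearly to a nilpotent-valued map $N_K$ on $W(K)$, and $w \mapsto \exp(-N_K(w))$ does the job. This log/exp extension from the lattice $T_pA$ to the vector group is precisely the engine of the paper's proof (\cref{rslt:algebraic-reps-of-vector-groups-equiv-unip-reps-of-Z-p-mod}, via Zariski-density of $T_pA$ in $T_pA\tensor\G_a$ and the rigid Identity Theorem). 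The paper never takes cohomology of $E_\tau A$ at all: it identifies unipotent $\tau$-bundles with unipotent continuous representations of $T_pA$ using the perfectoid cover $\wt A^p$ --- where the acyclicity you want genuinely holds --- and identifies algebraic representations of the vector group with the same category; essential surjectivity is then obtained by endowing the associated $V$-torsor with a vector-extension structure via $\Ext^1_v(A,V) = H^1_v(A,V)$ (\cref{rslt:Ext-A-Ga-is-H1-of-O}) and invoking the universal property (\cref{rslt:universal-vector-extension-exists}), and the case $\tau=\an$ is deduced from $\tau=v$ by the Cartesian squares of \cref{rslt:cartesian-diagram-for-sub-vector-group-of-unipotent}. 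So repairing your argument forces you back onto the paper's route; the descent-along-$E_\tau A$ strategy, as stated, cannot be completed in the $v$-topology.
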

	
	The proof of \cref{rslt:intro-Brion-vers2} relies on a theory of vector extensions (both for the analytic and the $v$-site) analogous to the classical theory. The appearance of $H^1_\tau(A, \O_A)^* \tensor \mathbb G_a$ comes from the fact that this vector group features in the universal $\tau$-vector extension of $A$.
	This is shown in \cref{rslt:universal-vector-extension-exists}, which generalises the classical result on the universal vector extensions from abelian to abeloid varieties and from the analytic to the $v$-topology.
	
	In order to link unipotent $v$-vector bundles to unipotent Higgs bundles, recall that on every smooth proper rigid space $X$ over $K$ there is a canonical short exact sequence
	\begin{align}
		0 \to H^1_\an(X, \O_X) \to H^1_v(X, \O_X) \xto{\HT} H^0(X, \Omega_X^1(-1)) \to 0 \label{eq:intro-HT}
	\end{align}
	of finite dimensional $K$-vector spaces, the Hodge-Tate sequence (see \cref{sec:prelim-higgs-bundles}). Now apply this to $X = A$ and fix a splitting of this sequence. This induces an isomorphism
	\begin{align*}
		H^1_v(A, \O_A)^* \cong H^1_\an(A, \O_A)^* \oplus H^0(A, \Omega_A^1(-1))^*.
	\end{align*}
	By \cref{rslt:intro-Brion-vers2}, representations of the left-hand-side correspond to unipotent $v$-vector bundles, while representations of the right-hand-side correspond to unipotent analytic vector bundles together with a commuting action of $H^0(A, \Omega_A^1(-1))^*$. It is not hard to verify that the latter action corresponds precisely to a Higgs field (see \cref{rslt:Simpson-correspondence-for-unipotent}). This concludes the proof of \cref{rslt:intro-Simpson-correspondence-vers2}.

	\paragraph{Naturality of the Correspondence.} 
	In contrast to the classical result over the complex numbers, the $p$-adic Corlette--Simpson correspondence is in general expected to depend on a splitting of the Hodge--Tate sequence and on a choice of an exponential on $K$.
	
	To understand why the first choice appears, one should view the $v$-cohomology $H^1_v(X, \O_X)$ appearing in \cref{eq:intro-HT} as an analog of the singular cohomology $H^1(Y, \C)$ on a complex manifold $Y$. In fact, by Scholze's Primitive Comparison Theorem we have
	\begin{align*}
		H^1_v(X, \O_X) = H^1_{\et}(X, \Z_p) \tensor_{\Z_p} K.
	\end{align*}
	In the complex case, the existence of a Kähler form guarantees a canonical splitting of the analog of \cref{eq:intro-HT} (the Hodge decomposition), which plays an important role in the Corlette--Simpson correspondence. In contrast, there is in general no canonical splitting in the non-archimedean world, so we content ourselves with artificially choosing any splitting of the Hodge-Tate map $\HT$. A better way to make this choice is arguably to choose a lift of $A$ to $B_{\mathrm{dR}}^+/\xi^2$, which by \cite[Proposition 7.2.5]{guo2019hodgetate} induces a splitting of this sequence. In particular, by considering the category of abeloids equipped with such a lift, the correspondence in \cref{rslt:intro-Simpson-correspondence-vers2} becomes natural in $A$. We also note that if $A$ is defined over a finite extension of $\Q_p$, there is a canonical choice of such a lift, and thus of a splitting of $\HT$.
	
	The choice of an exponential results from the step from $\G_a$ to $\G_m$, and essentially only affects the correspondence for line bundles since it does not enter in the unipotent case. Indeed, given any two choices of an exponential, the resulting equivalences on indecomposable objects only differ by twists with analytic torsion line bundles.
	
	\paragraph{Related Work.}
	Towards establishing a $p$-adic Corlette--Simpson correspondence for representations of the \'etale fundamental group, 
	Deninger and the third author have investigated the case of Higgs bundles with vanishing Higgs field, in which case they show that one can attach representations to vector bundles with numerically flat reduction \cite{DeningerWerner_vb_p-adic_curves,DeningerWerner_vb_p-adic_varieties}. W\"urthen \cite{wuerthen_vb_on_rigid_var} has extended this functor to the rigid analytic case and has shown that for analytic vector bundles, the notion of numerically flat reduction is closely related to being pro-finite-\'etale, i.e.\ the condition that also appears in \cref{rslt:intro-Simpson-correspondence-vers2}. For abeloids, it is easy to see that the functor thus defined agrees with ours restricted to vanishing Higgs fields.

	Liu--Zhu \cite{LiuZhu_RiemannHilbert} investigate a Riemann--Hilbert functor on a smooth rigid analytic variety $X$ over a finite extension of $\mathbb{Q}_p$, which yields part of a $p$-adic Corlette--Simpson correspondence, namely a tensor functor from the category of \'etale $\mathbb{Q}_p$-local systems on $X$ to the category of nilpotent Higgs bundles on $X \tensor \mathbb{C}_p$. Their approach also uses the pro-\'etale site in an essential way, but it starts with an arithmetic datum on the representation side and does not consider the question when a local system can be attached to a Higgs bundle.

	The category $v$-bundles has previously been studied by the second and third author \cite{MannWerner_LocSys_p-adVB}: For example, they show that pro-finite-\'etaleness and related properties of vector bundles can be checked on proper covers.
	
	One case in which a $p$-adic Corlette--Simpson correspondence for representations is known is the case of rank one \cite{heuer-v_lb_rigid}: In this case, characters of the \'etale fundamental group correspond to pro-finite-\'etale Higgs line bundles, and one can also explicitly describe pro-finite-\'etale line bundles as topological torsion points in the Picard variety \cite{heuer-diamantine-Picard}.
	
	However, a ``full'' $p$-adic Corlette--Simpson correspondence from a specific subcategory of Higgs bundles on a proper smooth variety $X$ to the category of finite-dimensional continuous $K$-linear representations of the étale fundamental group of $X$ has so far not been established yet, not even in non-trivial special cases of $X$.
	In particular, the question which vector bundles correspond to genuine $p$-adic representations in Faltings' equivalence remains open.
	
	\paragraph{Outlook.} We believe that our results are useful in two ways:     Firstly, we believe that studying abeloid and abelian varieties from a diamantine point of view gives interesting new insights in their theory, for example the universal $v$-vector extension in this case.
	
	Secondly, since currently there is not even a conjecture for what the $p$-adic Corlette--Simpson correspondence for representations of the \'etale fundamental group should look like, it seems very important to first understand interesting special cases. 
	Similarly to how $p$-adic Hodge theory was first fully understood on abelian varieties, the goal of this article is therefore to fully understand the $p$-adic side of the Corlette--Simpson correspondence (i.e.\ ``non-abelian $p$-adic Hodge theory'') for abelian varieties, and more generally abeloids.  Of course, the fact that the \'etale fundamental group is abelian in this case makes the correspondence much more accessible. However, we note that the investigation of the category of $v$-bundles in this special case already has implications for much more general situations.
	
	For example, for algebraic $X$, or by results of Hansen--Li \cite[Proposition 4.3]{Hansen_Li} also for rigid analytic $X$ with projective reduction, there is an ``Albanese abeloid'' $X\to A$ with $H^0(A,\Omega^1)=H^0(X,\Omega^1)$. It follows that our abeloid description captures the entire ``abelian'' part of the $p$-adic Corlette--Simpson correspondence for $X$, i.e.\ those representations that factor through the torsionfree quotient of the abelianization of the fundamental group of $X$ (which is the adelic Tate module $\pi_1(A,0)$ of the Albanese abeloid). In particular, this explains in this generality which representations should be associated to Higgs fields on the trivial bundle on $X$. 
	
	\paragraph{Acknowledgements.} We would like to thank Peter Scholze for many helpful discussions, and the referee for their careful reading and very useful comments.
	
	The first and second author were each supported by the Deutsche Forschungsgemeinschaft (DFG, German Research Foundation) via the Leibniz-Preis of Peter Scholze. 
	The first author was funded by DFG under Germany's Excellence Strategy-- EXC-2047/1 -- 390685813. The third author acknowledges support by the Deutsche Forschungsgemeinschaft, through TRR 326 \textit{Geometry and Arithmetic of Uniformized Structures}, project number 444845124.

	
	\section{Preliminaries on $v$-vector bundles}
	
	Throughout this article, we fix a complete algebraically closed non-archimedean field extension $K$ of $\Q_p$. We will work with smooth proper rigid spaces $X$ over $K$; later $X$ will always be an abeloid variety, but for now we can work in the general case.
	
	\subsection{Recollections on diamonds} \label{sec:prelim-diamonds}
	
	It will be important for our considerations to regard $X$ as a diamond in the sense of Scholze. In the following we recall some technical background for this.
	
	Let $\Perf_{K}$ the category of perfectoid spaces over $K$. This carries various natural topologies; we will be most interested in the $v$-topology in the sense of \cite[Definition~8.1]{etale-cohomology-of-diamonds}. We refer to \cite[\S11]{etale-cohomology-of-diamonds} for the definition of diamonds; we will exclusively work with ``locally spatial diamonds''. By \cite[Theorem~12.18]{etale-cohomology-of-diamonds} these can roughly be described as $v$-sheaves $Y$ on $\Perf_K$ that admit a quasi-pro-\'etale surjection by a perfectoid space $X$. We denote by $\LSD_K$ the full subcategory of $v$-sheaves on $\Perf_K$ consisting of all locally spatial diamonds. Every locally spatial diamond $Y$ comes equipped with the following sites:
	\begin{itemize}
		\item The $v$-site $Y_v$ consisting of all locally spatial diamonds over $Y$ with the $v$-topology. This definition differs slightly from \cite[Definition 14.1]{etale-cohomology-of-diamonds}, where $Y_v$ consists of all small $v$-sheaves on $\Perf_K$ over $Y$ -- however, both definitions produce the same topos, so the distinction does not matter in practice. In a similar vein we could also replace $Y_v$ by the $v$-site of perfectoid spaces over $Y$.
		
		\item The analytic site $Y_\an$, which is the site associated to the topological space $|Y|$ (see \cite[Definition 11.14]{etale-cohomology-of-diamonds}). 
		By definition, the condition that $Y$ be locally spatial means that $|Y|$ is locally spectral.
		
		\item The étale site $Y_\et$ and the pro-étale site $Y_\proet$ (see \cite[\S14]{etale-cohomology-of-diamonds}).
	\end{itemize}
	Given an analytic adic space $Z$ over $K$, one associates a presheaf $Z^\diamondsuit$ on $\Perf_{K}$ by sending any perfectoid space $X$ over $K$ to the set of morphisms of adic spaces $X\to Z$ over $K$. By the tilting equivalence, this coincides with the definition in \cite[Definition 15.5]{etale-cohomology-of-diamonds}.

	\begin{Theorem}[{\cite[Theorem 15.6]{etale-cohomology-of-diamonds}}, {\cite[Theorem 8.2.3]{relative-p-adic-hodge-2}}]\label{rslt:dmd-fully-faithful}
		$Z^\diamondsuit$ is a diamond, and in particular a $v$-sheaf. This ``diamondification'' defines a fully faithful functor
		\[\{ \text{semi-normal rigid spaces over } K \}\hookrightarrow \mathrm{LSD}_K,\quad X\mapsto X^\diamond.\]
		Moreover, for every semi-normal rigid space $X$ over $K$ we have natural equivalences of sites $X_\an = X^\diamond_\an$ and $X_\et = X^\diamond_\et$.
	\end{Theorem}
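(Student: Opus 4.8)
The plan is to establish the three assertions in turn, in each case reducing the global statement to the affinoid situation and then exploiting the tilting equivalence together with $v$-descent.

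\emph{The $v$-sheaf property.} Since $Z^\diamond = \Hom_K(-,Z)$ is a presheaf on $\Perf_K$, I would verify descent along $v$-covers of perfectoid spaces directly. The key input is that a morphism from an affinoid perfectoid space $\Spa(S,S^+)$ to an analytic adic space $Z$ amounts to a compatible family of bounded functions on $S$ satisfying the integrality conditions cutting $Z$ out of a polydisc. The $v$-descent for the structure sheaf of perfectoid spaces (vanishing of higher $v$-cohomology of $\O^+$ up to bounded $p$-power torsion, i.e.\ almost-mathematically) then allows one to glue such families along a $v$-cover, which yields the sheaf property.

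\emph{The diamond property.} Working analytic-locally, I may assume $Z = \Spa(R,R^+)$ is affinoid and analytic over $\Q_p$. I would then construct a pro-\'etale perfectoid cover $\wt Z \to Z$: realising $Z$ as a closed or rational subspace of a polydisc and adjoining all $p$-power roots of the coordinates produces a perfectoid toric tower over the polydisc, whose pullback $\wt Z$ is an affinoid perfectoid space with $\wt Z \to Z$ pro-\'etale. By the tilting equivalence $\wt Z^\diamond$ is represented by the perfectoid space $\wt Z^\flat$, and $\wt Z^\diamond \to Z^\diamond$ is a quasi-pro-\'etale surjection of $v$-sheaves; hence $Z^\diamond$ is a diamond by \cite[Theorem~12.18]{etale-cohomology-of-diamonds}, and it is locally spatial because $|Z|$ is locally spectral.

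\emph{Spaces, \'etale sites and full faithfulness.} Points of $Z^\diamond$ are equivalence classes of maps from $\Spa(C,C^+)$, i.e.\ adic points of $Z$, yielding a homeomorphism $|Z^\diamond| \cong |Z|$; and since finite \'etale covers pull back compatibly to all perfectoid test objects and conversely, one obtains $Z^\diamond_\et \cong Z_\et$, compatibly with the analytic sites. A morphism $f\colon X^\diamond \to Y^\diamond$ thus induces a continuous map $|X| \to |Y|$ and a morphism on \'etale sites. The crux, and the reason the source and target are restricted to \emph{semi-normal} rigid spaces, is to promote such an $f$ to an honest morphism of rigid spaces by reconstructing the map on structure sheaves. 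Evaluating $f$ on the perfectoid covers and descending gives a map $\O_Y \to \O_X$ of untilted structure sheaves on the \'etale sites; the subtlety is that the diamond only sees $X$ through its semi-normalisation, i.e.\ $X^\diamond = (X^{\mathrm{sn}})^\diamond$. Restricting to semi-normal $X,Y$ is exactly the hypothesis guaranteeing that the analytic functions $\O_X$ are recovered as the descended functions on $\wt X$, so that the reconstructed map of locally ringed spaces is a genuine rigid morphism and $\Hom(X,Y)\cong\Hom(X^\diamond,Y^\diamond)$. I expect this reconstruction of functions through the perfectoid cover -- and the verification that semi-normality precisely closes the gap between $\O_X$ and the larger ring of functions the diamond a priori sees -- to be the main obstacle; faithfulness is then immediate, since a rigid morphism is determined by the recovered data $|X|\to|Y|$ and $\O_Y\to\O_X$.
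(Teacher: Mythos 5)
First, a point of calibration: the paper offers no proof of this statement at all. It is imported wholesale from the literature --- the $v$-sheaf/diamond assertions and the comparison of sites are \cite[Theorem 15.6]{etale-cohomology-of-diamonds}, and full faithfulness on semi-normal rigid spaces is \cite[Theorem 8.2.3]{relative-p-adic-hodge-2} --- so your attempt has to be measured against the proofs in those references. Your architecture does match theirs: reduce to affinoids, obtain the $v$-sheaf property from $v$-descent of $\O$ and $\O^+$, produce a perfectoid quasi-pro-\'etale cover for the diamond property, and locate the whole difficulty of full faithfulness in recovering the structure sheaf from the diamond, with semi-normality as the exact hypothesis that makes this possible. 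As a roadmap this is accurate; as a proof it has genuine gaps, because the steps you assert in passing are precisely the deep theorems.

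Concretely: (1) the claim that the pullback $\wt Z = Z \times_{\B^n} \B^n_\infty$ of the perfectoid toric tower along a closed immersion is affinoid perfectoid is not formal --- a fibre product with a perfectoid space has no a priori reason to be perfectoid, and the actual argument rests on Scholze's theorem that Zariski closed subspaces of affinoid perfectoid spaces are again affinoid perfectoid; you neither cite nor prove this, and without it the construction does not start. (2) The claim that $\wt Z \to Z$ is pro-\'etale is false whenever $Z$ meets a vanishing locus of a coordinate: adjoining $p$-power roots of $T_i$ is (pro-)finite \'etale only where $T_i$ is invertible, since $T \mapsto T^p$ on $\B^1$ is ramified at $0$ --- which is exactly why the toric towers used in this paper, following \cite[Lemma 4.5]{Scholze_p-adicHodgeForRigid}, live over tori $\TT^n$ rather than polydiscs. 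What is true, and what \cite[Theorem 12.18]{etale-cohomology-of-diamonds} actually requires, is that $\wt Z^\diamond \to Z^\diamond$ is quasi-pro-\'etale (pro-\'etale after pullback to strictly totally disconnected spaces, where on each connected component every coordinate is either zero or invertible); that needs its own argument, not a relabelling. (3) Most seriously, for full faithfulness you correctly isolate the key statement --- that for semi-normal $X$ one has $\nu_\ast \O_v = \O_{X_{\an}}$, so that morphisms of diamonds can be promoted to morphisms of rigid spaces --- and then explicitly defer it as ``the main obstacle''. That statement \emph{is} the theorem of Kedlaya--Liu; it is the entire mathematical content of the full faithfulness assertion (and is what the paper itself invokes, again via \cite[Theorem 8.2.3]{relative-p-adic-hodge-2}, when it identifies $\nu_\ast\O_v$ with $\O_{X_{\an}}$). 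So the proposal correctly reconstructs the skeleton of the cited proofs, but all three load-bearing inputs --- Zariski closed immersions of perfectoids, quasi-pro-\'etaleness of the covers, and structure-sheaf recovery for semi-normal spaces --- are assumed rather than established.
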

	
	For any analytic adic space over $K$, diamondification defines a natural morphism of sites
	\[\nu\colon X^\diamond_v \to X_\an.\]
	There are structure sheaves $\O_{v}$ and $\O^+_{v}$ on $\Perf_K$ which are sheaves for the $v$-topology \cite[Theorem 8.7]{etale-cohomology-of-diamonds}. In particular, they naturally extend to $v$-sheaves on $\LSD_K$. If $X$ is a semi-normal rigid space, we will usually abuse notation and write $X_v$ instead of $X^\diamond_v$.  On $X$ the restriction $\nu_{\ast}\O_v$ of $\O_v$ to $X_{\an}$ coincides with the analytic structure sheaf $\O_{X_{\an}}$ via the natural map \cite[Theorem 8.2.3]{relative-p-adic-hodge-2}.
	See \cite[\S2]{MannWerner_LocSys_p-adVB} for more details. We will therefore often omit the subscript and simply write $\O$ and $\O^+$.
	
	We will write $H^\ast_{\an}(X,-)$ and $H^\ast_{v}(X,-)$ for the cohomology with respect to the (small) analytic site $X_{\an}$ and the site $X^\diamond_v$, respectively.
	
	\subsection{The diamantine universal cover}
	Next, we recall the pro-finite-\'etale universal cover of a rigid space:
	\begin{Definition}[{cf. \cite[Definition 4.6]{heuer-v_lb_rigid}}]
		Let $X$ be a connected smooth rigid space over $K$ and fix any base-point $x \in X(K)$. We define the \emph{pro-finite-étale universal cover} $\wt X$ of $X$ as
		\begin{align*}
			\wt X := \varprojlim_{X'\to X} X',
		\end{align*}
		where the index category on the right is given by the pointed maps $(X', x')\to (X,x)$ from connected finite-étale covers $X' \to X$ together with a point $x' \in X'(K)$ over $x$. The limit is taken in the category of locally spatial diamonds and in particular $\wt X$ is a locally spatial diamond (cf. \cite[Lemma 11.22]{etale-cohomology-of-diamonds}). The points $x'$ give rise to a lift $\wt x\in \wt X(K)$ of $x$. While  $\wt X\to X$ has a large automorphism group, the additional datum of this point $\wt x$ makes the pointed space $(\wt X,\wt x)$ unique up to \textit{unique} isomorphism, and functorial in $X$. 
	\end{Definition}

	It is easy to see that $\wt X$ is the universal pro-finite-\'etale cover of $X$ in the following sense:
	\begin{Lemma}[{\cite[Lemma~4.8]{heuer-v_lb_rigid}}]\label{up-1-of-univ-cover}
		Let $Y\to X$ be any pro-finite-\'etale cover with a lift $y\in Y(K)$ of $x$. Then there is a unique morphism $\wt X\to Y$ over $X$ sending $\wt x$ to $y$.
	\end{Lemma}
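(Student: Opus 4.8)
The plan is to reduce the universal property to the defining property of the limit $\wt X = \varprojlim_{X'} X'$ by recognising the relevant connected components of $Y$ as objects of the indexing category. By definition of a pro-finite-\'etale cover, I would write $Y = \varprojlim_{j\in J} Y_j$ as a cofiltered limit of finite-\'etale covers $q_j\colon Y_j \to X$ with transition maps $\phi_{jj'}\colon Y_j\to Y_{j'}$ over $X$; the point $y$ then projects to compatible points $y_j\in Y_j(K)$ lying over $x$. The individual $Y_j$ need not be connected, but since $X$ is connected the connected component $Y_j^\circ \subseteq Y_j$ containing $y_j$ is again a finite-\'etale cover of $X$, so that $(Y_j^\circ, y_j)$ is a pointed connected finite-\'etale cover of $(X,x)$ and hence an object of the index category defining $\wt X$.

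To produce the morphism, each object $(Y_j^\circ, y_j)$ yields a canonical projection $p_j\colon \wt X \to Y_j^\circ \hookrightarrow Y_j$ with $p_j(\wt x)=y_j$. A transition map $\phi_{jj'}$ carries $Y_j^\circ$ into $Y_{j'}^\circ$ (its image is connected and contains $\phi_{jj'}(y_j)=y_{j'}$), so $\phi_{jj'}\colon Y_j^\circ \to Y_{j'}^\circ$ is a morphism in the index category, and functoriality of the projections from $\wt X$ gives $\phi_{jj'}\circ p_j = p_{j'}$. Thus the $p_j$ are compatible and, by the universal property of $Y=\varprojlim_j Y_j$, they assemble into a morphism $f\colon \wt X \to Y$ over $X$ with $f(\wt x)=y$.

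For uniqueness I would argue by rigidity. Suppose $f,g\colon \wt X\to Y$ are two maps over $X$ with $f(\wt x)=g(\wt x)=y$. Composing with the projections $\pi_j\colon Y\to Y_j$ gives maps $f_j,g_j\colon \wt X\to Y_j$ over $X$ agreeing at $\wt x$. Since $q_j$ is finite-\'etale, hence separated and unramified, the diagonal $Y_j \hookrightarrow Y_j\times_X Y_j$ is open and closed, so the locus where $f_j$ and $g_j$ agree, being the preimage of this diagonal under $(f_j,g_j)$, is an open and closed subspace of $\wt X$ containing $\wt x$. As $\wt X$ is connected --- being a cofiltered limit of connected covers with surjective transition maps --- this locus is all of $\wt X$, whence $f_j=g_j$ for every $j$ and therefore $f=g$ because $Y=\varprojlim_j Y_j$.

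The main obstacle is not the categorical skeleton above but the geometric input in the diamond setting: I need that the connected component of a finite-\'etale cover of the connected locally spatial diamond $X$ through a chosen point is itself an (open-and-closed) finite-\'etale cover, and so a genuine object of the index category; that $\wt X$ is connected; and that the diagonal of a finite-\'etale map of diamonds is open and closed, so that the section-determined-by-a-point argument is valid. These rest on the theory of the \'etale site of diamonds and the behaviour of the underlying topological spaces under cofiltered limits; once they are in place, both existence and uniqueness follow formally.
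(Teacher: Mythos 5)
Your existence argument is correct and is the expected one (note that the paper itself does not prove this lemma but quotes it from \cite[Lemma~4.8]{heuer-v_lb_rigid}, so there is no internal proof to compare against): passing to the connected components $Y_j^\circ$ through the marked points realises them as objects of the index category defining $\wt X$ (each $Y_j$ is an honest rigid space, hence locally connected, so $Y_j^\circ$ is clopen in $Y_j$, therefore again finite \'etale, and surjective onto $X$ since its image is a non-empty clopen subset of the connected space $X$), the transition maps respect the marked components, and the cone of limit projections assembles into the desired pointed map $\wt X \to Y$ by the universal property of $Y = \varprojlim_j Y_j$. Your reduction of uniqueness to the finite levels $Y_j$ is also the right move; it is indeed essential to pass to the $Y_j$, because the diagonal of the pro-finite-\'etale map $Y \to X$ itself is closed but in general not open, so the clopen-equalizer argument cannot be run against $Y$ directly.

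The genuine gap is the connectedness of $\wt X$, on which your uniqueness argument stands or falls. The parenthetical justification ``being a cofiltered limit of connected covers with surjective transition maps'' is not a theorem of general topology: all standard proofs of such statements pass through quasi-compactness, and you supply no substitute. What is true, and what you should say, is the following: if $X$ is quasi-compact (in particular in the paper's application, where $X=A$ is an abeloid, hence proper), then every $|X'|$ is a connected \emph{spectral} space, the transition maps are finite \'etale, hence quasi-compact, closed and surjective, and $|\wt X| = \varprojlim |X'|$ by \cite[Lemma~11.22]{etale-cohomology-of-diamonds}; then any clopen subset $W \subseteq |\wt X|$ is a quasi-compact open, hence of the form $\pi_{X'}^{-1}(W')$ for a quasi-compact open $W' \subseteq |X'|$ at some finite level (see e.g.\ the results on limits of spectral spaces in \cite{StacksProject}), and since the projections $\pi_{X'}$ are surjective (cofiltered limits of non-empty finite sets, applied fibrewise), both $W'$ and its complement are forced to have preimages $W$ and $|\wt X| \setminus W$, so $W'$ is clopen in the connected space $|X'|$ and hence trivial. (For proper $X$ one may instead quote $H^0(\wt X, \Z/2)=\Z/2$ from \cref{rslt:properties-of-wt-X}.) However, the lemma as stated in the paper allows an arbitrary connected smooth rigid space $X$, which need not be quasi-compact; then $|\wt X|$ is only locally spectral, clopen subsets need not be quasi-compact, and the finite-level descent above breaks down. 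The natural alternative --- factoring any map $\wt X \to Y_j$ through a finite stage of the tower and invoking rigidity of pointed maps between connected finite \'etale covers --- likewise requires a quasi-compactness hypothesis. So as written your proof establishes the lemma for quasi-compact (e.g.\ proper) $X$, which is all the paper's main results need, but in the stated generality the connectedness of $\wt X$ requires an additional argument that you have not supplied.
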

	
	If $X$ is proper, then $\wt X$ is also the ``universal cover'' of $X$ in a $v$-cohomological sense:
	
	\begin{Proposition} \label{rslt:properties-of-wt-X}
		Let $X$ be a connected smooth proper rigid space over $K$.
		\begin{propenum}
			\item $\wt X \to X$ is a pro-\'etale torsor under the étale fundamental group $\pi_1(X, x)$.
			\item Let $F$ be one of the following $v$-sheaves: $\Z_p$, $\widehat \Z$, $\O^{+a}$, $\O$ or any abelian torsion group $G$ considered as a constant sheaf. Then
			\begin{align*}
				H^0(\wt X, F) = F(K), \qquad H^1_v(\wt X, F) = 0.
			\end{align*}
		\end{propenum}
	\end{Proposition}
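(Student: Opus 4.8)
The plan is to treat the two parts separately, and within part (ii) to organize the coefficient sheaves into three groups: the torsion sheaves (from which $\Z_p$ and $\widehat\Z$ follow formally), and the integral and rational structure sheaves $\O^{+a}$ and $\O$.

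For part (i), I would first note that the connected pointed finite-\'etale Galois covers $(X_i,x_i)$ are cofinal in the indexing category, so that $\wt X=\varprojlim_i X_i$ with each $X_i\to X$ a torsor under the finite group $G_i=\Aut(X_i/X)=\pi_1(X,x)/H_i$, and $\pi_1(X,x)=\varprojlim_i G_i$. Each finite level gives an isomorphism $X_i\times_X X_i\cong \underline{G_i}\times X_i$. Passing to the cofiltered limit along the diagonal-cofinal index set and commuting the limit past the fibre product and the product with the profinite constant diamonds $\underline{G_i}$ yields $\wt X\times_X\wt X\cong \underline{\pi_1(X,x)}\times\wt X$. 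Since $\wt X\to X$ is by construction a cofiltered limit of finite-\'etale surjections, it is in particular a pro-\'etale cover, and the above identity exhibits it as a pro-\'etale $\pi_1(X,x)$-torsor.

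For the $H^0$-statements in part (ii), the key point is that $\wt X$ is connected: $|\wt X|=\varprojlim_i|X_i|$ is a cofiltered limit of connected spectral spaces along surjective transition maps, hence connected. For a constant sheaf this gives $H^0(\wt X,F)=F(K)$ immediately, and for $\Z_p$, $\widehat\Z$ it will follow after passing to the limit; for $\O$ and $\O^{+a}$ it comes out of the computation below. For the $H^1$-vanishing with torsion coefficients $G$, I would use that $\wt X$ is \emph{simply connected}: by \cref{up-1-of-univ-cover}, any connected finite-\'etale cover $Y\to \wt X$ descends to some finite-\'etale $Y_i\to X_i$, which as a pointed finite-\'etale cover of $X$ receives a map from $\wt X$; this produces a section of $Y\to\wt X$ and forces $Y=\wt X$. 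Hence every $G$-torsor on $\wt X_\et$ is trivial for finite $G$, so $H^1_\et(\wt X,G)=0$ (and then for all torsion $G$ by passing to the colimit over finite subgroups, using that $\wt X$ is qcqs), and the comparison of \'etale and $v$-cohomology for torsion coefficients \cite[\S14]{etale-cohomology-of-diamonds} gives $H^1_v(\wt X,G)=0$. The cases $F=\Z_p=R\varprojlim_n\Z/p^n$ and $F=\widehat\Z=R\varprojlim_n\Z/n$ then follow from $R\Gamma_v(\wt X,F)=R\varprojlim_n R\Gamma_v(\wt X,F/n)$: the system $\{H^0=\Z/n\}$ has surjective transition maps, so its $\varprojlim^1$ vanishes and $H^0(\wt X,F)=F(K)$, $H^1_v(\wt X,F)=0$.

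The remaining and substantially harder case is $F=\O^{+a}$ (whence $\O$ follows): since $\O=\O^{+a}[\tfrac1p]$ and $\wt X$ is spatial, inverting $p$ commutes with $v$-cohomology, so it suffices to prove $H^0_v(\wt X,\O^{+a})\aeq\O_K^{+a}$ and $H^1_v(\wt X,\O^{+a})\aeq 0$; an almost-zero module becomes genuinely zero after $[\tfrac1p]$, and $\O_K^{+a}[\tfrac1p]=K$, giving the $\O$-statement. To compute the almost-integral cohomology I would reduce modulo $p$: as $R\Gamma_v(\wt X,\O^{+a})$ is derived $p$-complete, it is enough to control $R\Gamma_v(\wt X,\O^{+a}/p)$, and here the $p$-torsion coefficients allow one to use continuity along the tower, $R\Gamma_v(\wt X,\O^{+a}/p)\aeq\varinjlim_i R\Gamma_v(X_i,\O^{+a}/p)$. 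The heart of the matter, and the step I expect to be the main obstacle, is then to show that this filtered colimit is almost concentrated in degree $0$ with value $\O_K^{+}/p$. Unlike the torsion case this is not formal, since $\O^{+}/p$ is not a constant sheaf; it relies on Scholze's primitive comparison theorem and the precise behaviour of structure-sheaf cohomology in the pro-finite-\'etale tower (as in \cite{heuer-v_lb_rigid}), the point being that the positive-degree Hodge--Tate contributions $H^0(X_i,\Omega^1_{X_i})(-1)$ and $H^1_\an(X_i,\O_{X_i})$ must die in the limit. Once this almost-vanishing is established, the derived $p$-completion and the inversion of $p$ close out the proof for $\O^{+a}$ and $\O$.
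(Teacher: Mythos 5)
Your part (i), your treatment of torsion coefficients via simple connectedness, the limit argument for $\Z_p$ and $\widehat\Z$, and your reduction of the $\O^{+a}$-case to controlling $\varinjlim_i R\Gamma_v(X_i,\O^{+a}/p)$ all match the structure of the paper's argument (which defers to \cite[Proposition~4.9]{heuer-v_lb_rigid}). However, there is a genuine gap exactly at the point you yourself flag as ``the main obstacle'': you never prove that this filtered colimit is almost concentrated in degree $0$, and the heuristic you offer --- that the Hodge--Tate contributions $H^1_\an(X_i,\O)$ and $H^0(X_i,\Omega^1_{X_i})(-1)$ ``must die in the limit'' --- is not only unproven but false as stated. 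For a finite \'etale cover $f\colon X_j\to X_i$ in characteristic $0$, the trace map splits $f^*$ on coherent cohomology, so $H^1_\an(X_i,\O)\to H^1_\an(X_j,\O)$ is injective; hence $\varinjlim_i H^1_\an(X_i,\O)\neq 0$ whenever $H^1_\an(X,\O)\neq 0$ (e.g.\ for an abeloid). The rational picture cannot die in the colimit; what reconciles this with $H^1_v(\wt X,\O)=0$ is precisely that continuity along the tower holds only for (almost) torsion coefficients, and that the $R\lim$ over $p$-power truncations does not commute with the colimit over $i$. So any attempt to verify the key step by tracking Hodge--Tate pieces rationally would fail.

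The gap can be closed with tools you already set up, and this is what the paper does via the Primitive Comparison Theorem \cite[Theorem~5.1]{Scholze_p-adicHodgeForRigid}: at each finite level it gives an almost isomorphism
\begin{align*}
H^q_v(X_i,\O^+/p^n) \aeq H^q_\et(X_i,\Z/p^n)\otimes_{\Z/p^n}\O_K^+/p^n ,
\end{align*}
so the colimit you need to compute is one of \emph{\'etale torsion cohomology}, not of Hodge--Tate pieces. In degree $0$, connectedness of each $X_i$ gives $\varinjlim_i H^0 = \Z/p^n$; in degree $1$, a class in $H^1_\et(X_i,\Z/p^n)$ is a $\Z/p^n$-torsor, i.e.\ a finite \'etale cover of $X_i$, which splits over $\wt X$ and hence --- since finite \'etale covers of the limit descend to finite level, as you already used --- splits over some $X_j$; thus $\varinjlim_i H^1_\et(X_i,\Z/p^n)=0$. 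This is exactly your simple-connectedness argument from the torsion case, re-used after the comparison. Combined with your derived $p$-completion step and the Mittag--Leffler argument (the transition maps on the almost-$H^0$ terms $\O_K^+/p^n$ are surjective, killing the $\lim^1$-contribution), this establishes $H^0_v(\wt X,\O^{+a})\aeq\O_K^{+a}$ and $H^1_v(\wt X,\O^{+a})\aeq 0$, and your inversion of $p$ then yields the statement for $\O$.
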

	\begin{proof}
		Part (i) follows from the fact that $\wt X \times_X \wt X \to \wt X$ is an inverse limit of finite étale covers and every finite étale cover of $\wt X$ splits and hence is isomorphic to a finite disjoint union of copies of $\wt X$. Part (ii) follows easily from \cite[Proposition 4.9]{heuer-v_lb_rigid}. As a summary, note that the cohomology of $G$ and consequently $\Z_p$ is easily computed. To get the claim about the cohomology of $\O^+$ we can then use the Primitive Comparison Theorem (\cite[Theorem 5.1]{Scholze_p-adicHodgeForRigid}). The cohomology of $\O$ follows by inverting $p$ (since $\wt X$ is qcqs, filtered colimits can be pulled out of the cohomology).
	\end{proof}
	
	\subsection{The universal cover of abeloids}
	
	If $X=A$ is an abeloid variety, then the universal cover $\wt X$ has particularly good properties: In this case we have a canonical base point $0\in A(K)$. Since any connected finite \'etale cover of $A$ is an isogeny from an abeloid $A'\to A$, we then more explicitly have
	\[ \wt A=\varprojlim_{[N]} A,\]
	and thus $\pi_1(A,0)=TA=\varprojlim_{N\in\N} A[N]$ is the adelic Tate module. In particular, the universal cover in this case gives rise to a diamantine uniformisation
	\[ A=\wt A/TA.\]
	We refer to \cite{perfectoid-covers-Arizona}\cite[\S1, \S3]{heuer-isoclasses} for a more detailed discussion of the space $\wt A$ and its properties. For us it will be important that for $\wt A$ one can improve on the properties stated in \cref{rslt:properties-of-wt-X}:
	\begin{Proposition}[{\cite[Corollary~5.8]{perfectoid-covers-Arizona}, \cite[Proposition 4.2]{heuer-Picard-good-reduction}.}] \label{rslt:vanishing-of-higher-cohom-on-wt-X-for-abeloid} 
		$\wt A$ is a perfectoid space. Moreover, we have 
		\[ H^i(\wt A,\O^+)\aeq 0 \quad \text{ for all }i\geq 1.\]
	\end{Proposition}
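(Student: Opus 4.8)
The plan is to prove the two assertions in turn: first that $\wt A$ is perfectoid, and then to deduce the almost-vanishing of its higher $\O^+$-cohomology from the resulting tilde-limit presentation $\wt A\sim\varprojlim_n A$ along the multiplication maps.

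\textbf{Perfectoidness.} Since the prime-to-$p$ part of the tower is a pro-finite-\'etale cover and finite-\'etale covers of perfectoid spaces are perfectoid (almost purity), it suffices to treat the $p$-power tower $\wt A_p=\varprojlim_{[p]}A$. If $A$ has good reduction I would argue on a formal model: $A$ then admits a formal abelian scheme model $\mathfrak A$ over $\O_K$, and the special fibre $A_0=\mathfrak A\otimes_{\O_K}k$ over the residue field $k$ satisfies $[p]=V\circ F$, where $F$ is the relative Frobenius and $V$ the Verschiebung. As $F^\ast$ is the $p$-th power map, every element in the image of the transition map $[p]^\ast$ is, modulo $p$, a $p$-th power; passing to the completed colimit $\O^+_{\wt A_p}=(\varinjlim_n\O^+_A)^\wedge_p$ this shows that Frobenius is almost surjective on $\O^+_{\wt A_p}/p$, which is the integral perfectoid condition. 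For a general abeloid I would invoke the Raynaud (non-archimedean) uniformisation $0\to M\to E\to A\to 0$, in which $E$ is an extension of a good-reduction abeloid by a torus and $M$ is a lattice: the $[p]$-tower of the torus is the classical perfectoid torus, that of the good-reduction part is covered by the previous case, and the lattice contributes only a pro-finite-\'etale layer. This is exactly \cite[Corollary~5.8]{perfectoid-covers-Arizona}.

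\textbf{Continuity of cohomology along the tower.} Granting perfectoidness, $\wt A\sim\varprojlim_n A$ is a tilde-limit of the proper smooth rigid space $A$ along $[p]$, and the technical heart of the argument is the almost-continuity of $\O^+$-cohomology along this limit, namely
\[
R\Gamma_v(\wt A,\O^+)\aeq \big(\varinjlim_n R\Gamma_v(A,\O^+)\big)^{\wedge}_p,
\]
where the transition maps are the pullbacks $[p]^\ast$ and the completion is $p$-adic. (For $i=1$ the vanishing is already contained in \cref{rslt:properties-of-wt-X}, so the genuinely new content is $i\ge 2$.)

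\textbf{Computation of the transition maps.} I would then compute that $[p]^\ast$ acts on $H^i_v(A,\O^+)$ by multiplication by $p^i$. By the Hodge--Tate sequence \eqref{eq:intro-HT} it suffices to see this on $H^1$: invariant differentials scale by $p$ under $[p]$, so $[p]^\ast$ is multiplication by $p$ on $H^0(A,\Omega^1_A)(-1)$, and $[p]$ acts by $p$ on $\Pic^0_A$ and hence by $p$ on $H^1_{\an}(A,\O_A)=\operatorname{Lie}(A^\vee)$; thus $[p]^\ast$ is multiplication by $p$ on $H^1_v(A,\O)$. Since the graded algebra $H^\ast_v(A,\O)$ is the exterior algebra on $H^1_v(A,\O)$, as for abelian varieties, and $[p]^\ast$ is a ring homomorphism, it acts by $p^i$ on $H^i_v(A,\O)$, and integrally (almost) by $p^i$ on $H^i_v(A,\O^+)$. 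For $i\ge 1$ the colimit $\varinjlim(M\xrightarrow{p^i}M\xrightarrow{p^i}\cdots)\cong M\tf$ is uniquely $p$-divisible, so its (derived) $p$-adic completion vanishes; feeding this into the displayed formula yields $H^i_v(\wt A,\O^+)\aeq 0$ for all $i\ge 1$, while $i=0$ returns $\O_K$.

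\textbf{Main obstacle.} I expect the crux to be twofold. On the geometric side it is the perfectoidness in the absence of good reduction, which forces one through the Raynaud uniformisation in order to reduce to the torus and good-reduction cases. On the cohomological side it is the justification of the displayed $p$-completed-colimit formula, i.e.\ the continuity of almost-$\O^+$-cohomology along the tilde-limit; once this is available the vanishing is formal, since the transition maps act by $p^i$ and are therefore topologically nilpotent in every degree $i\ge 1$.
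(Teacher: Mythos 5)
The paper does not actually prove this Proposition: both assertions are imported wholesale from the two cited references, so there is no in-paper argument to compare against. Your sketch faithfully reconstructs the strategy of those references --- perfectoidness of the $[p]$-tower via formal models and $[p]=V\comp F$ on the special fibre in the good-reduction case, Raynaud uniformisation in general, and then almost-vanishing via continuity of $\O^+$-cohomology along the tilde-limit combined with the transition maps acting on $H^i$ by $p^i$ --- and you correctly isolate the two places where the real work lies (perfectoidness beyond good reduction, and the completed-colimit formula, which is indeed the technical heart of the cited proofs).

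There is, however, one genuine slip: in the cohomological half you prove the statement for the wrong space. In this paper $\wt A=\varprojlim_{[N]}A$ is the limit over \emph{all} multiplication maps $[N]$, whereas $\varprojlim_{[p]}A$ is the smaller cover $\wt A^{p}$ of \cref{d:p-adic-cover}, whose acyclicity is the separate statement \cref{rslt:p-adic-cover-O-acyclic}. Your displayed continuity formula, with colimit indexed by $n$ along $[p]^{\ast}$, therefore computes $R\Gamma_v(\wt A^{p},\O^+)$ rather than $R\Gamma_v(\wt A,\O^+)$; and the reduction to the $p$-tower that you made for perfectoidness (via limits of finite-\'etale covers of the perfectoid space $\wt A^{p}$) has no verbatim cohomological analogue, since Cartan--Leray computes cohomology of the base from that of the total space, not conversely. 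The repair is easy and stays inside your method: either run the identical argument over the full divisibility-ordered tower, using that $[N]^{\ast}$ acts (almost) by $N^i$ on $H^i_v(A,\O^+)$, so that the colimit is still uniquely $p$-divisible for $i\geq 1$ and its derived $p$-completion vanishes; or first treat $\wt A^{p}$ exactly as you do and then pass to $\wt A=\varprojlim_{(N,p)=1}\wt A^{p}$ by continuity of $\O^+/p$-cohomology along the finite-\'etale transition maps, followed by $R\lim$ over $\O^+/p^n$. A second, smaller point: knowing that $[p]^{\ast}$ is multiplication by $p$ on the sub and the quotient of the Hodge--Tate sequence does not formally force $[p]^{\ast}=p$ on $H^1_v(A,\O)$, since the discrepancy could a priori be a nonzero map $H^0(A,\wtOm^1)\to H^1_{\an}(A,\O)$; argue instead via the identification $H^1_v(A,\O)=\Hom_{\cts}(T_pA,K)$, on which $[p]^{\ast}$ is visibly multiplication by $p$, or via primitivity of $H^1$ with respect to the group law. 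With these repairs, and granting the tilde-limit continuity statement you flag, your outline is a correct account of how the cited results are proved.
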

	\begin{Remark}
		This is not true in general, e.g. for $X = \mathbb P^n$ we simply have $\wt X = X$. It is unknown whether $\wt X$ can always be represented by an analytic adic space.
	\end{Remark}
	\subsection{Pro-finite-étale vector bundles}
	The point of view we would like to adopt on Faltings' work is to replace generalised representations by locally trivial $\O$-modules in the $v$-topology. From this perspective, the honest representations $\pi\to \GL_n(K)$ correspond precisely to those $v$-vector bundles that become trivial on the cover $\wt X \to X$, as we shall now discuss.
	
	\begin{Definition}
		Let $X$ be a smooth rigid space over $K$.
		\begin{defenum}
			\item A \emph{$v$-vector bundle} on $X$ is a locally free $\O$-module of finite rank on $X_v$. 
			
			\item An \emph{analytic vector bundle} on $X$ is a locally free $\O$-module of finite rank on $X_{\an}$. 	There is a natural functor from analytic vector bundles to $v$-vector bundles given by
			\[ M\mapsto \nu^{\ast}M:=\nu^{-1}M\otimes_{\nu^{-1}\O_{X_{\an}}}\O_v\]
			where we recall that $\nu:X_{v}\to X_{\an}$ denotes the natural morphism of sites. We say that a $v$-vector bundle is analytic if it is in the essential image of this functor.
			\item A $v$-vector bundle on $X$ is called \emph{pro-finite-\'etale} if it becomes free on a pro-finite-étale cover of $X$. If $X$ is connected, then by \cref{up-1-of-univ-cover} this is equivalent to saying that it becomes free on $\wt X$. 
			
			\item We call an analytic vector bundle pro-finite-\'etale if its associated $v$-vector bundle is pro-finite-\'etale.
		\end{defenum}
	\end{Definition}
	
	In the case of $K = \C_p$, the category of pro-finite-\'etale $v$-vector bundles was studied in \cite[\S3.1]{wuerthen_vb_on_rigid_var} and \cite{MannWerner_LocSys_p-adVB} (more precisely, pro-finite-\'etale $v$-vector bundles are equivalent to  vector bundles with properly trivializable reduction modulo all $p^n$ in the sense of \textit{loc cit}).
	
	Note that we have a diagram of sites
	\begin{center}\begin{tikzcd}
			& X_{v} \arrow[ld] \arrow[rd] \\
			X_{\profet} && X_{\an}
	\end{tikzcd}\end{center}
	but there is no functor between the pro-finite-étale site and the analytic site. Indeed, if $X$ is connected, the only object contained in both is the identity $X\to X$.
	
	The following Lemma says that for analytic bundles, we may freely switch back and forth between $X_{\an}$ and $X_v$.
	\begin{Lemma}\label{l:an-to-v-vb-fully-faithful}
		The functor $M\mapsto \nu^{\ast}M$ is fully faithful.
	\end{Lemma}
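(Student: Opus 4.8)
The plan is to express morphisms as global sections and reduce everything to the identity $\nu_\ast\O_v=\O_{X_\an}$ recalled in \cref{sec:prelim-diamonds}. Since $\nu\colon X_v\to X_\an$ is a morphism of sites, pullback $\nu^\ast$ on $\O$-modules is left adjoint to pushforward $\nu_\ast$. Hence for any two analytic vector bundles $M,N$ on $X$ there is a natural identification
\[
\Hom_{X_v}(\nu^\ast M,\nu^\ast N)\cong \Hom_{X_\an}(M,\nu_\ast\nu^\ast N),
\]
under which the map on Hom-sets induced by $M\mapsto \nu^\ast M$ becomes post-composition with the adjunction unit $\eta_N\colon N\to \nu_\ast\nu^\ast N$. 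Therefore it suffices to prove that $\eta_N$ is an isomorphism for every analytic vector bundle $N$: full faithfulness then follows at once, uniformly in $M$.

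Next I would verify that $\eta_N$ is an isomorphism by working locally on $X_\an$. Being an isomorphism of sheaves on $X_\an$ is a local condition, so we may assume $N\cong \O_{X_\an}^n$ is free. As $\nu^\ast$ is a tensor functor sending $\O_{X_\an}$ to $\O_v$, we get $\nu^\ast N\cong \O_v^n$; and since $\nu_\ast$ is left exact and commutes with finite direct sums, $\nu_\ast\nu^\ast N\cong(\nu_\ast\O_v)^n$. By the identity $\nu_\ast\O_v=\O_{X_\an}$ this is canonically $\O_{X_\an}^n\cong N$, and one checks that under these identifications $\eta_N$ is the identity map. Thus $\eta_N$ is a local, hence global, isomorphism, which completes the argument.

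The only non-formal ingredient is the identity $\nu_\ast\O_v=\O_{X_\an}$, which is already available, so there is no serious obstacle. The one point requiring care is to ensure that $\eta_N$ is genuinely the \emph{canonical} unit map, rather than merely establishing that source and target are abstractly isomorphic: I would therefore keep track of the adjunction unit throughout instead of choosing ad hoc trivialisations, so that the identification $\nu_\ast\nu^\ast N\cong N$ is canonical and the verification legitimately reduces to the free case, where it is immediate.
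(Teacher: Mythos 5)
Your proof is correct and takes essentially the same route as the paper: the paper's one-line argument also rests on $\nu_\ast\O_v=\O_{X_\an}$ and a local verification, phrased as ``$\nu_\ast$ defines a quasi-inverse on the essential image,'' which is exactly the statement that the adjunction unit $\eta_N\colon N\to\nu_\ast\nu^\ast N$ is an isomorphism. Your write-up merely makes the adjunction bookkeeping explicit, which is a virtue rather than a deviation.
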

	\begin{proof}
		Since $\nu_{\ast}\O_v=\O_{X^{\an}}$ by \cref{rslt:dmd-fully-faithful} and the following remarks, the functor $\nu_{\ast}$ defines a quasi-inverse on the essential image, as one sees locally on $X$.
	\end{proof}
	
	For any base point $x$ we denote by $\mathrm{Rep}_{K}(\pi_1(X,x))$ the category of continuous finite-dimensional $K$-linear representations of $\pi_1(X,x)$.
	The following theorem is the key to analyze this category. 
	
	\begin{Theorem}[{\cite[Theorem~5.2]{heuer-v_lb_rigid}}]\label{rslt:profet-vb-vs-Reps}
		Let $X$ be a connected smooth proper rigid space over $K$ and fix $x \in X(K)$. Then there is an exact equivalence of tensor categories
		\begin{alignat*}{3}
			\{ \text{\normalfont pro-finite-\'etale $v$-vector bundles on $X$} \} & \ \isomarrow \ && \mathrm{Rep}_{K}(\pi_1(X,x)),\\
			V& \ \, \mapsto&&V(\wt X).\\
			V_{\rho}&  \ \, \mapsfrom &&\rho:\pi_1(X,x)\to \GL(W)
		\end{alignat*}
		where $V_\rho$ is defined as the v-sheaf on $X$ that sends $Y\to X$ to
		\[V_\rho(Y)=\{ x\in W\otimes_K\O(Y\times_X\wt X)\mid g^{\ast} x=\rho^{-1}(g)x \text{ for all }g\in \pi_1(X,x)\}\]
		If $K = \C_p$, then these categories are also equivalent to the category of $\C_p$-local systems on $X_v$ which possess an integral model, as defined in \cite[Definition 3.23]{MannWerner_LocSys_p-adVB}.
	\end{Theorem}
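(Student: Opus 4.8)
The plan is to establish the equivalence by $v$-descent along the pro-\'etale $G$-torsor $\pi\colon\wt X\to X$, where $G:=\pi_1(X,x)$, exploiting the $\O$-acyclicity of $\wt X$ recorded in \cref{rslt:properties-of-wt-X}. First I would verify that the functor $V\mapsto V(\wt X)$ is well defined. If $V$ is pro-finite-\'etale then $V|_{\wt X}\cong\O^{n}$ with $n=\rk V$, so $W:=H^0(\wt X,V)\cong H^0(\wt X,\O)^{n}=K^{n}$ is finite-dimensional over $K$, using $H^0(\wt X,\O)=K$ from \cref{rslt:properties-of-wt-X}(ii). The deck transformations of the torsor act on $\wt X$ over $X$ and hence on $W$ by pullback; since this action fixes $H^0(\wt X,\O)=K$, it is $K$-linear, producing a homomorphism $\rho\colon G\to\GL(W)$.

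The main obstacle is to show that $\rho$ is continuous, and for this I would argue with the descent datum rather than with the sections directly. Fixing a trivialisation $t\colon\O_{\wt X}^{\,n}\isomarrow V|_{\wt X}$, the two pullbacks of $V|_{\wt X}$ along the projections of $\wt X\times_X\wt X$ are canonically identified, and the resulting comparison isomorphism is an element $\theta\in\GL_n(\O(\wt X\times_X\wt X))$. Because $\wt X\to X$ is a $G$-torsor, the projection trivialises it, giving $\wt X\times_X\wt X\cong\underline G\times\wt X$ and hence $\O(\wt X\times_X\wt X)=\Mapc(G,\O(\wt X))$; thus $\theta$ is by construction a continuous map $G\to\GL_n(\O(\wt X))$. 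Evaluating this cocycle on the global sections $W=t(K^{n})$ and using once more that $G$ acts trivially on $H^0(\wt X,\O)=K$ shows that each $\theta(g)$ sends the constant vectors $K^{n}$ into $K^{n}$, so that $\theta$ actually takes values in the constant matrices $\GL_n(K)$ and coincides with $\rho$. Continuity of $\theta$ then yields continuity of $\rho$.

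For the inverse functor I would check that $V_\rho$ is a pro-finite-\'etale $v$-vector bundle. Pulling back to $\wt X$ and using $\wt X\times_X\wt X\cong\underline G\times\wt X$ again, the defining $\rho$-semi-invariance identifies $V_\rho|_{\wt X}$ with $\O^{\dim W}$, so $V_\rho$ trivialises on $\wt X$ and is in particular locally free. That the two functors are mutually quasi-inverse is then a descent statement: a pro-finite-\'etale $V$ is recovered from its restriction to $\wt X$ together with its descent datum, which by the previous paragraph is exactly the linear $G$-action on $W$, whence $V\cong V_{\rho}$; conversely a direct computation gives $V_\rho(\wt X)=W$ with its original action.

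Finally, exactness and the tensor structure are formal consequences of the $\O$-acyclicity. Given a short exact sequence $0\to V'\to V\to V''\to 0$ of pro-finite-\'etale bundles, restriction to $\wt X$ stays exact and $H^1_v(\wt X,V')=H^1_v(\wt X,\O)^{\,n'}=0$ forces $H^0(\wt X,-)$ to be exact. For the tensor product, $(V\tensor V')|_{\wt X}\cong\O^{nn'}$ together with $H^0(\wt X,\O)=K$ gives a natural isomorphism $H^0(\wt X,V\tensor V')\cong W\tensor_K W'$ compatible with the $G$-actions. The additional equivalence for $K=\C_p$ with $\C_p$-local systems admitting an integral model follows by matching this description with \cite[Definition 3.23]{MannWerner_LocSys_p-adVB}.
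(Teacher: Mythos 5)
Your proposal is correct and follows essentially the same route as the paper: $v$-descent along the $\pi_1(X,x)$-torsor $\wt X\to X$, combined with $H^0(\wt X,\O)=K$ and $H^1_v(\wt X,\O)=0$ from \cref{rslt:properties-of-wt-X}, identifying descent data of finite free $\O$-modules on $\wt X$ with continuous $K$-linear representations. The only difference is presentational: where the paper delegates the matching of descent data with continuous actions to the Cartan--Leray argument of \cite[Proposition 3.6]{heuer-v_lb_rigid}, you spell it out directly via the cocycle in $\GL_n(\O(\wt X\times_X\wt X))=\GL_n(\Mapc(\pi_1(X,x),\O(\wt X)))$, which is a fine (and, given $\O(\wt X)=K$, even slightly redundant) way to obtain continuity.
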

	
	This can be regarded as an extension of \cite[\S3]{wuerthen_vb_on_rigid_var}, where a functor from analytic vector bundles to representations is constructed.
	
	\begin{proof}
		This equivalence is a formal consequence of glueing in the $v$-site together with \cref{rslt:properties-of-wt-X}. More precisely, since any pro-finite-\'etale $v$-vector bundle on $X$ is free on $\wt X$, the category of pro-finite-\'etale $v$-vector bundles is equivalent to the category of descent data of finite free $\O$-modules on $\wt X_v$ along $\wt X \to X$. From $H^0(\wt X, \O) = K$ it is clear that the category of finite free $\O$-modules on $\wt X$ is equivalent to the category of finite $K$-vector spaces. Since $\wt X \to X$ is a $\pi_1(X, x)$-torsor, it follows that giving a descent datum of a finite free $\O$-module $\mathcal M$ along $\wt X \to X$ amounts to specifying a $\pi_1(X, x)$-action on $\mathcal M$, this can be seen as an instance of the Cartan--Leray spectral sequence \cite[Proposition 3.6]{heuer-v_lb_rigid}.
		
		For the second claim (about local systems) note that it follows in the same vein (using $H^0(\wt X, K) = K$) that $\mathrm{Rep}_K(\pi_1(X, x))$ is naturally equivalent to the category of $K$-local systems on $X_v$ which are constant on $\wt X$. If $K = \C_p$ one checks easily that this is indeed precisely the category of $\C_p$-local systems with integral model (for one direction use \cite[Corollary 3.21]{MannWerner_LocSys_p-adVB}).
	\end{proof}
	
	\begin{Remark}
		We learnt the idea that pro-finite-\'etale covers of $X$ can be used to study the $p$-adic Hodge theory of $X$ from Bhatt who in \cite{Lectures_Arizona} uses them for abelian varieties of good reduction to prove the Hodge--Tate decompositon in this case.
	\end{Remark}

	
	\subsection{Higgs bundles} \label{sec:prelim-higgs-bundles}
	Let $X$ be a smooth rigid space over $K$.
	To motivate the definition of Higgs bundles in the $p$-adic setting,
	let us begin by recalling Scholze's perspective on the Hodge--Tate spectral sequence: In \cite{Scholze2012Survey}, Scholze proves that the Leray spectral sequence for $\nu:X_v\to X_{\an}$ applied to the  structure sheaf $\O$ can be interpreted as the Hodge--Tate spectral sequence in case that $X$ is proper. In low degrees, this is in general a left-exact sequence
	
	\begin{equation}\label{HTseq}
		0\to H^1_{\an}(X,\O)\to H^1_{v}(X,\O)\xrightarrow{\HT} H^0_{\an}(X,\Omega^1(-1))\to 0
	\end{equation}
	that is also right-exact if $X$ is proper.
	Here the $(-1)$ is a Tate twist, which in the absence of Galois actions simply means tensoring with the free $\Z_p$-module $\Hom(T_p\mu_{p^\infty},\Z_p)$. Of course since we are working over an algebraically closed field, any choice of a compatible system of $p$-power unit roots induces an isomorphism $\Omega^1(-1) \cong \Omega^1$, but it is more natural not to make such a choice: For example, the Tate twist is important to keep track of the Galois action if $X$ has a model over a local field. 
	
	\begin{Definition}
		To simplify notation, we shall from now on write
		\[ \wt \Omega^1:=\Omega^1(-1).\]
	\end{Definition}
	\begin{Example}
		To see why the twist appears, recall that in the case of abelian varieties, one can define $\HT$ via the morphism
		\[ T_pA^\vee\to H^0(A,\Omega^1)\]
		given by regarding an element of $T_pA^\vee$ as a morphism $\Q_p/\Z_p\to A^\vee[p^\infty]$, dualising, and sending this to the pullback of $dT/T$ on $\mu_{p^\infty}$. Extending $K$-linearly, we can then use the Weil pairing to identify 
		\[T_pA^\vee\otimes K=\Hom(T_pA,\Z_p(1))\otimes K=H^1_v(A,\O)(1).\] Twisting by $(-1)$ gives the map $\HT$ in \cref{HTseq}.
	\end{Example}
	The upshot of this discussion is that in the $p$-adic situation, it is natural to include a Tate twist in the definition of Higgs bundles:
	\begin{Definition}\label{d:Higgs-bundles-2}
		A \emph{Higgs bundle} on $X$ is a pair $(E, \theta)$, where $E$ is an analytic vector bundle on $X$ and $\theta$ is an element 
		\begin{align*}
			\theta \in H^0(X, \IEnd(E) \tensor \wtOm^1)
		\end{align*}
		satisfying the Higgs field condition
		\[ \theta\wedge \theta=0.\]
		Note that we can view $\theta$ as a map $E \to E \tensor \wt\Omega^1$.

		A morphism $(E, \theta) \to (E', \theta')$ of Higgs bundles on $X$ is a map $\varphi:E \to E'$ of vector bundles such that the following diagram commutes:
		\[ \begin{tikzcd}
			E \arrow[d,"\varphi"] \arrow[r,"\theta"] & E\otimes \wtOm^1 \arrow[d,"\varphi\otimes \id"] \\
			E' \arrow[r,"\theta'"] & E'\otimes \wtOm^1
		\end{tikzcd}\]
	\end{Definition}
	
	The category of Higgs bundles is an exact tensor category, where exactness is measured on the underlying vector bundles and the tensor product is defined as
	\begin{align*}
		(E, \theta) \tensor (E', \theta') := (E \tensor E', \theta \tensor \id_E' + \id_E \tensor \theta')
	\end{align*}
	with identity object given by the ``trivial Higgs bundle'' $(\O, 0)$.
	
	\begin{Definition}
		Let $X$ be a smooth rigid variety over $K$.
		\begin{defenum}
			\item A \emph{pro-finite-\'etale Higgs bundle} on $X$ is a Higgs bundle $(E, \theta)$ on $X$ such that $E$ is a pro-finite-\'etale vector bundle.
			
			\item A \emph{unipotent Higgs bundle} on $X$ is a Higgs bundle $(E, \theta)$ on $X$ which can be written as a successive extension (in the category of Higgs bundles) of the trivial Higgs bundle.
		\end{defenum}
	\end{Definition}
	
	\subsection{The $p$-adic Corlette--Simpson correspondence for line bundles} \label{sec:prelim-simpson-for-line-bundles}
	
	The category of $v$-line bundles can be described explicitly in terms of homological algebra. This is possible because the group $\GL_1=\G_m$ is abelian.  We shall now review how one can in particular describe the pro-finite-\'etale line bundles, and the statement of the $p$-adic Corlette--Simpson correspondence in this case.

	The key point is that there is an analog of Scholze's Hodge--Tate spectral sequence when $\O$ is replaced by the sheaf of units $\O^\times$: As explained in \cite[\S2]{heuer-v_lb_rigid}, the $p$-adic logarithm can be used to show that there is for any smooth rigid space $X$ over $K$ a left-exact sequence
	\[ 0\to \Pic_{\an}(X)\to \Pic_{v}(X)\xrightarrow{\HT\log} H^0(X,\wt\Omega^1)\to 0\]
	which is also right-exact if $X$ is proper \cite[Theorem~1.3.2]{heuer-v_lb_rigid}. This implies:
	\begin{Theorem}[{\cite[Theorem~5.2]{heuer-v_lb_rigid}}]\label{rslt:Simpson-linebundles}
		Let $X$ be a smooth proper rigid space over $K$. Then any choice of an exponential function (i.e.\ a continuous splitting of $\log$ on $K$) and of a Hodge--Tate splitting sets up an equivalence of categories
		\[ \{\text{$v$-line bundles on $X$}\}\isomarrow \{ \text{Higgs line bundles on $X$}\}\]
		that induces via \cref{rslt:profet-vb-vs-Reps} an equivalence of categories
		\[\Bigg\{\begin{array}{@{}c@{}l}\text{$1$-dim.\ continuous $K$-linear}\\\text{representations of $\pi_1(X,x)$} \end{array}\Bigg\} \isomarrow \Bigg\{\begin{array}{@{}c@{}l}\text{pro-finite-\'etale}\\\text{Higgs line bundles on $X$}\end{array}\Bigg\}.\]
	\end{Theorem}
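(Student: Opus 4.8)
The plan is to exploit the two short exact sequences at our disposal and the fact that they are intertwined by the logarithm. On the additive side we have the Hodge--Tate sequence \cref{HTseq} for $\O$, and on the multiplicative side the sequence recalled above,
\[0\to \Pic_{\an}(X)\to \Pic_{v}(X)\xrightarrow{\HT\log} H^0(X,\wtOm^1)\to 0,\]
for $\O^\times$. These are compatible via the pushforward of the $p$-adic logarithm $\log_\ast\colon \Pic_v(X)=H^1_v(X,\O^\times)\to H^1_v(X,\O)$, which forms a map of short exact sequences and factors the multiplicative Hodge--Tate map as $\HT\log=\HT\circ\log_\ast$. First I would record the elementary observation that for a \emph{line} bundle $E$ one has $\IEnd(E)=\O$, so a Higgs field is just an element $\theta\in H^0(X,\wtOm^1)$ and the integrability condition $\theta\wedge\theta=0$ is vacuous; hence a Higgs line bundle is a pair $(L,\theta)$ with $L$ an analytic line bundle and $\theta\in H^0(X,\wtOm^1)$, and under tensor product the isomorphism classes form the group $\Pic_{\an}(X)\times H^0(X,\wtOm^1)$, additive in $\theta$.

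Next I would combine the two chosen splittings into a single section of $\HT\log$. The Hodge--Tate splitting is a section $s\colon H^0(X,\wtOm^1)\to H^1_v(X,\O)$ of $\HT$, and the exponential --- a continuous splitting of $\log$ on $K$ --- is used to lift $s$ through $\log_\ast$ to a homomorphism $\sigma\colon H^0(X,\wtOm^1)\to\Pic_v(X)$ satisfying $\log_\ast\circ\sigma=s$ and hence $\HT\log\circ\sigma=\id$. Granting $\sigma$, I define the functor sending a $v$-line bundle $M$ to $(L,\theta)$ with $\theta:=\HT\log(M)$ and $L:=M\otimes\sigma(\theta)^{-1}$, the latter lying in $\Pic_{\an}(X)$ precisely because it is killed by $\HT\log$; the quasi-inverse sends $(L,\theta)$ to $L\otimes\sigma(\theta)$. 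Exactness of the multiplicative sequence shows these are mutually inverse on isomorphism classes. Promoting this to an equivalence of tensor categories then amounts to checking that $\sigma$ is a homomorphism (so the functor is monoidal, using that $s$ is $K$-linear) and that morphisms match; since over a connected proper $X$ both categories are Picard categories with $\Hom$-groups equal to $H^0(X,\O^\times)=K^\times$ between isomorphic objects and $0$ otherwise, full faithfulness is then routine.

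Finally, for the induced equivalence on representations I would invoke \cref{rslt:profet-vb-vs-Reps}, which identifies $1$-dimensional continuous $K$-linear representations of $\pi_1(X,x)$ with pro-finite-\'etale $v$-line bundles, and verify that the functor above matches the pro-finite-\'etale conditions. The crucial point is that $\sigma(\theta)$ is \emph{always} pro-finite-\'etale: the class $s(\theta)\in H^1_v(X,\O)$ pulls back to $H^1_v(\wt X,\O)=0$ by \cref{rslt:properties-of-wt-X}, so $\sigma(\theta)$ becomes trivial on $\wt X$. As the pro-finite-\'etale $v$-line bundles form a subgroup of $\Pic_v(X)$ stable under tensor and inverse, it follows that $M$ is pro-finite-\'etale if and only if $L=M\otimes\sigma(\theta)^{-1}$ is, i.e.\ if and only if $(L,\theta)$ is a pro-finite-\'etale Higgs line bundle, which gives the second equivalence.

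I expect the main obstacle to be the construction of the lift $\sigma$: producing from the chosen exponential on $K$ an honest section of $\log_\ast$ on cohomology that is compatible with the Hodge--Tate splitting. This is exactly the step where the choice of exponential enters and where one must control the sheaf-level behaviour of the logarithm --- its kernel of roots of unity and the convergence of the exponential away from topologically nilpotent sections. Once $\sigma$ is available with the stated properties, all remaining steps are formal consequences of the two exact sequences and \cref{rslt:profet-vb-vs-Reps}.
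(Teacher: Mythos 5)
Your overall architecture matches what the paper intends: \cref{rslt:Simpson-linebundles} is quoted from \cite{heuer-v_lb_rigid}, and the paper presents it as a consequence of the multiplicative Hodge--Tate sequence $0\to \Pic_{\an}(X)\to \Pic_{v}(X)\xrightarrow{\HT\log} H^0(X,\wtOm^1)\to 0$, which is exactly the sequence you twist against. However, your execution has genuine gaps at precisely the points that carry the content. First and foremost, the lift $\sigma$ with $\log_\ast\circ\sigma=s$ and $\HT\log\circ\sigma=\id$ is never constructed; you defer it as ``the main obstacle'', but this construction \emph{is} the theorem --- it is the only place the exponential enters, and without it nothing has been proved. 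Within the paper's own toolkit there is a clean route: for connected proper $X$, Cartan--Leray for the $\pi_1(X,x)$-torsor $\wt X\to X$ together with \cref{rslt:properties-of-wt-X} identifies $H^1_v(X,\O)\cong\Hom_{\cts}(\pi_1(X,x),K)$; compose $s$ with this identification, apply the chosen exponential $K\to K^\times$ pointwise to get a continuous character, and then apply \cref{rslt:profet-vb-vs-Reps} (i.e.\ $\rho\mapsto V_\rho$) to land in $\Pic_v(X)$. This makes $\sigma(\theta)$ pro-finite-\'etale \emph{by construction}, which also repairs a second gap in your argument: from $\log_\ast\bigl(\sigma(\theta)|_{\wt X}\bigr)=0$ you cannot conclude that $\sigma(\theta)|_{\wt X}$ is trivial without knowing injectivity of $\log_\ast$ on the relevant part of $\Pic_v(\wt X)$; the sheaf-theoretic kernel of the logarithm involves roots of unity, so one needs at least $H^1_v(\wt X,\mu_{p^\infty})=0$ and some care about where the sheaf-level logarithm is even defined. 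The remaining compatibility $\HT\log\circ\sigma=\id$ is then exactly the commutation of Cartan--Leray with $\log$, i.e.\ the diagram \cite[\S4.4, (11)]{heuer-v_lb_rigid} that this paper invokes elsewhere; this is where the external input is genuinely used.

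The second concrete error is your description of morphisms: it is false that in either category the Hom-groups are ``$K^\times$ between isomorphic objects and $0$ otherwise''. Hom between isomorphic line bundles is $H^0(X,\O)=K$, and Hom between non-isomorphic ones need not vanish: $\Hom(\O,L)=H^0(X,L)$ is typically nonzero for an analytic line bundle with sections, and correspondingly $\Hom\bigl((\O,0),(L,0)\bigr)=H^0(X,L)$ on the Higgs side. Full faithfulness of your functor is still true, but for different reasons: when the Higgs fields agree, both Hom-groups equal $H^0_{\an}(X,L^{-1}\tensor L')$, using $\nu_\ast\O_v=\O_{X_{\an}}$ as in \cref{l:an-to-v-vb-fully-faithful}; when $\theta\neq\theta'$, the Higgs-side Hom vanishes because $X$ is irreducible (multiplication by the nonzero section $\theta'-\theta$ of $\wtOm^1$ is injective on sections), and the $v$-side Hom $H^0_v\bigl(X, L^{-1}\tensor L'\tensor\sigma(\theta'-\theta)\bigr)$ vanishes because a nonzero element would give a nontrivial map from the $v$-line bundle $\sigma(\theta-\theta')$ to the analytic line bundle $L^{-1}\tensor L'$, forcing $\sigma(\theta-\theta')$ to be analytic by \cref{rslt:analyticity-of-sub-LB-of-VB}, contradicting $\HT\log\bigl(\sigma(\theta-\theta')\bigr)=\theta-\theta'\neq 0$. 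With these repairs, the rest of your reduction --- twisting by $\sigma(\HT\log M)$, monoidality of $\sigma$, and matching the pro-finite-\'etale conditions through \cref{rslt:profet-vb-vs-Reps} --- is correct and is indeed the formal part.
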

	In order to understand the $p$-adic Corlette--Simpson correspondence in rank one, it thus remains to determine when a Higgs line bundle is pro-finite-\'etale. This is done as follows:
	\begin{Theorem}[{\cite[Theorem~5.1]{heuer-diamantine-Picard}}]
		Assume that the rigid Picard functor of $X$ is represented by a rigid group variety $G$. Then a line bundle $L$ on $X$ is pro-finite-\'etale if and only if its associated point $L\in G(K)$ is topological torsion, i.e.\ there is $N\in\N$ such that \[L^{Np^n}\to 1 \quad \text{for}\quad n\to \infty.\]
	\end{Theorem}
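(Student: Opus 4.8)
The plan is to combine the identification of pro-finite-\'etale $v$-line bundles with characters of $\pi_1$ coming from \cref{rslt:profet-vb-vs-Reps} with the Hodge--Tate--logarithm sequence of \cref{sec:prelim-simpson-for-line-bundles}, and then to match the resulting condition on a character with topological torsion of the point $L\in G(K)$ via the logarithm of the rigid group $G$. A preliminary reduction simplifies matters: since $\NS(X)$ is torsion-free while $[N]^{\ast}$ scales a N\'eron--Severi class by $N^2$, both ``pro-finite-\'etale'' and ``topological torsion'' force the class of $L$ in $\NS(X)$ to vanish, so I may assume $L$ lies in the identity component $G^{\circ}$ (e.g.\ $L\in A^\vee(K)=\Pic^0(A)$ for an abeloid $X=A$) from the outset.

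First I would unwind the definitions. By \cref{up-1-of-univ-cover}, an analytic line bundle $L$ is pro-finite-\'etale exactly when $\nu^{\ast}L$ trivialises on the universal cover $\wt X$, i.e.\ when $L$ lies in the kernel of $\Pic_v(X)\to\Pic_v(\wt X)$. Feeding $H^0(\wt X,\O^\times)=K^\times$ and $H^1_v(\wt X,\O)=0$ (from \cref{rslt:properties-of-wt-X}) into the Cartan--Leray sequence used to prove \cref{rslt:profet-vb-vs-Reps}, this kernel is canonically the character group $\Hom_{\cts}(\pi_1(X,x),K^\times)$, and the same input gives $H^1_v(X,\O)=\Hom_{\cts}(\pi_1(X,x),(K,+))$. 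Thus $L$ is pro-finite-\'etale iff $\nu^{\ast}L\cong V_\chi$ for some continuous character $\chi$.

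Next I would detect \emph{analyticity} of $V_\chi$ using the left/right-exact sequence
\[ 0\to \Pic_{\an}(X)\to \Pic_v(X)\xrightarrow{\HT\log} H^0(X,\wt\Omega^1)\to 0. \]
Since $\HT\log$ is built from the $p$-adic logarithm, the composite $\Hom_{\cts}(\pi_1,K^\times)\hookrightarrow\Pic_v(X)\xrightarrow{\HT\log}H^0(X,\wt\Omega^1)$ factors as $\chi\mapsto\HT(\log\chi)$, where $\log\chi\in H^1_v(X,\O)$ and $\HT$ is the Hodge--Tate map of \cref{HTseq}; hence $V_\chi$ is analytic iff $\log\chi\in\ker\HT=H^1_{\an}(X,\O)=\mathrm{Lie}(G)$. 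It remains to match this with topological torsion through the logarithm $\log_G$ of the rigid group $G$, which satisfies $\log_G(g^{p^n})=p^n\log_G(g)$ and is defined exactly on the topological torsion subgroup (the residue disks around torsion points, where $g^{Np^n}\to1$). For pro-finite-\'etale $\Rightarrow$ topological torsion, a continuous character has image in the topological torsion units $(K^\times)^{tt}=\O^\times$ (the residue field is algebraically closed), so $\chi^{Np^n}\to1$; transporting this convergence through the identification — using that $G$ \emph{represents} the Picard functor, so that convergence in $\Hom_{\cts}(\pi_1,K^\times)$ matches convergence in $G(K)$ — yields $L^{Np^n}\to1$. Conversely, for $L\in G(K)^{tt}$ the element $\log_G(L)\in\mathrm{Lie}(G)=H^1_{\an}(X,\O)$ exponentiates to a character $\chi$ with $\log\chi=\log_G(L)$, so $V_\chi$ is analytic and pro-finite-\'etale, and comparing the two logarithms identifies $V_\chi\cong\nu^{\ast}L$.

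The hard part will be the last matching. One must show that the purely cohomological isomorphism $\{\text{pro-finite-\'etale line bundles}\}\cong\Hom_{\cts}(\pi_1,K^\times)$ is compatible with the \emph{analytic} topology on $G(K)$, so that the algebraic condition $\log\chi\in\mathrm{Lie}(G)$ is genuinely equivalent to the convergence condition $L^{Np^n}\to1$. Concretely this requires identifying the character-theoretic logarithm $\chi\mapsto\log\chi$ with the Lie-theoretic logarithm $\log_G$ of the rigid group, with careful bookkeeping of their kernels — the prime-to-$p$ roots of unity (corresponding to finite-\'etale, hence automatically pro-finite-\'etale and topologically torsion, twists) and the $\mu_{p^\infty}$ lying in $\ker\log_G$. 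Upgrading the bijection on $K$-points to a statement about $p$-divisible families, where the representability hypothesis on $G$ is indispensable, is where the real work lies.
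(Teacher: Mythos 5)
This statement is not proved in the paper at all: it is quoted verbatim from \cite[Theorem~5.1]{heuer-diamantine-Picard}, so there is no internal proof to compare against, and your proposal has to stand on its own. It does not, because the step you yourself flag as ``where the real work lies'' is not a technical detail to be deferred --- it \emph{is} the theorem. The identification of pro-finite-\'etale line bundles with $\Hom_{\cts}(\pi_1(X,x),K^\times)$ via \cref{rslt:profet-vb-vs-Reps} is purely cohomological (Cartan--Leray on $\wt X$), while topological torsion is a statement about the rigid-analytic topology on $G(K)$; representability of the Picard functor gives a bijection of abstract groups $G(K)=\Pic(X)$, but by itself says nothing about whether convergence $\chi^{Np^n}\to 1$ of characters corresponds to convergence $L^{Np^n}\to 1$ in $G(K)$, nor whether the Lie-theoretic $\log_G$ agrees with the character-theoretic logarithm under $\mathrm{Lie}(G)=H^1_{\an}(X,\O)\subset H^1_v(X,\O)$. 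Establishing exactly this compatibility (via the diamantine Picard functor and its comparison with the rigid one) is the content of the cited reference, so your outline assumes the conclusion at its crucial point. There is also a convergence issue in your converse direction: $\exp$ of $\log_G(L)$ need not converge, so one can only exponentiate $p^m\log_G(L)$ for $m\gg 0$, which yields pro-finite-\'etaleness of $L^{p^m}$ and requires an extra argument (e.g.\ that $\Pic_v(\wt X)$ has no $p$-torsion, via the Kummer sequence and \cref{rslt:properties-of-wt-X}) to descend to $L$.

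Two of your auxiliary claims are also false as stated. First, the preliminary reduction: the theorem concerns a general smooth proper $X$ with representable Picard functor, where there is no multiplication-by-$N$ map and $\NS(X)$ need not be torsion-free (both facts you invoke are abeloid-specific, quoted in \cref{sec:prelim-simpson-for-line-bundles}); moreover, even when available, topological torsion only forces the N\'eron--Severi class to be \emph{torsion}, not zero, so the reduction must handle torsion components rather than dismiss them. Second, the equality $(K^\times)^{\tt}=\O^\times$ holds only when the residue field is $\overline{\F}_p$ (i.e.\ essentially for $K=\C_p$): over larger $K$, a unit whose reduction is non-torsion in the residue field is not topologically torsion. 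The correct assertion --- that the image of a continuous character of a profinite group is topologically torsion --- is true, but because that image is a \emph{compact} subgroup of $K^\times$, hence has finite image in the residue field and lands in $1+\mathfrak{m}$ after raising to a finite power. This is not pedantry: the gap between $\O^\times$ and $(K^\times)^{\tt}$, equivalently between $\Pic^0$ and $\Pic^{\tt}$, is precisely the subtlety the paper insists on in \cref{rem:homogeneous} and \cref{remark:nfr-vs-profet}, and conflating the two would make the theorem false over general base fields.
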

	In the case of abeloid varieties, it is known that the Picard functor is always representable by a group whose identity component is the dual abeloid $A^\vee$ \cite[\S6]{BL-Degenerating-AV}, and whose N\'eron--Severi group is torsionfree. Consequently, in this case, the pro-finite-\'etale line bundles are precisely the ones represented by the topological torsion subgroup
	\[ A^{\vee}(K)^{\tt}=\Hom_{\cts}(\wh{\Z},A^\vee(K))\subseteq A^\vee(K).\]
	In summary, we have in this case a canonical short exact sequence
	\[ 0\to A^{\vee}(K)^{\tt}\to \Hom_{\cts}(TA,K^\times)\xrightarrow{\HT\log} H^0(A,\Omega^1(-1))\to 0.\]
	Here the first map can be interpreted as the Weil pairing \cite[\S4]{DeningerWerner-lb_and_p-adic-characters} \cite[\S5.2]{heuer-diamantine-Picard}.
	\section{Pro-finite-\'etale vector bundles on abeloids}
	
	As before, let $K$ be a complete algebraically closed field extension of $\Q_p$. In this section, we show that on an abeloid variety $A$ over $K$ any pro-finite-\'etale $v$-vector bundle can be built out of pro-finite-\'etale line bundles, which are well-understood by the $p$-adic Corlette--Simpson correspondence in rank 1 of \cite[\S5]{heuer-v_lb_rigid}, and unipotent vector bundles. We begin by studying the latter.
	\subsection{Unipotent $v$-vector bundles}
	In this subsection, we can again work in greater generality: We assume throughout that $X$ is a smooth rigid space over $K$.
	\begin{Definition}
		\begin{defenum}
			\item A $v$-vector bundle $E$ on $X$ is called $v$-\emph{unipotent} if it is a successive extension of trivial $v$-line bundles on $X_v$.
			\item An analytic vector bundle $E$ on $X$ is \emph{analytically unipotent} if $E$ is a successive extension of trivial line bundles on $X_\an$.
		\end{defenum}
	\end{Definition}
	
	Of course any analytic vector bundle that is analytically unipotent is automatically v-unipotent. We will now show that the converse is true.
	
	\begin{Lemma} \label{rslt:unipotent-plus-analytic-implies-analytically-unipotent}
		Let $E$ be an analytic vector bundle on $X$. Then $E$ is $v$-unipotent if and only if $E$ is analytically unipotent.
	\end{Lemma}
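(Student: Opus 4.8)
The forward direction is immediate and was already noted above: $\nu^{\ast}$ is exact on short exact sequences of analytic vector bundles (these are locally split, being sequences of locally free modules) and carries $\O_{X_\an}$ to $\O_v$, so it transports any analytic unipotent filtration of $E$ to a $v$-unipotent filtration. The substance of the lemma is the converse, which I would prove by induction on the rank $n$ of $E$, the only essential input being the full faithfulness of $\nu^{\ast}$ from \cref{l:an-to-v-vb-fully-faithful}.

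For the base case $n \le 1$, a $v$-unipotent bundle of rank one is isomorphic to $\O$ as a $v$-bundle; since $E$ is analytic, full faithfulness forces $E \cong \O$ already on $X_\an$, so $E$ is analytically unipotent. For the inductive step I would take a $v$-unipotent filtration of $E$ and pass to its top quotient, obtaining a short exact sequence of $v$-vector bundles
\[ 0 \to E' \to E \xto{\pi} \O \to 0, \]
where $E'$ is $v$-unipotent of rank $n-1$. The crucial observation is that $\pi$ is a morphism of $v$-vector bundles between the two \emph{analytic} bundles $E$ and $\O$, so by \cref{l:an-to-v-vb-fully-faithful} it is $\nu^{\ast}$ of a morphism of analytic vector bundles $\pi \colon E \to \O$.

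The next step is to descend the exactness. I claim $\pi$ is already surjective on $X_\an$: surjectivity of a morphism onto a line bundle may be tested on the fibres at classical points, and pulling the $v$-surjection $\nu^{\ast}\pi$ back along each $x \in X(K)$ shows these fibres are nonzero, so the coherent cokernel of $\pi$ has no classical point in its Zariski-closed support and therefore vanishes. As $\O$ is a line bundle, $\pi$ is then locally split, its kernel $K$ is an analytic vector bundle of rank $n-1$, and applying the (on locally split sequences exact) functor $\nu^{\ast}$ to $0 \to K \to E \to \O \to 0$ identifies $\nu^{\ast}K \cong E'$. Thus $E'$ is analytic; being $v$-unipotent of smaller rank, the induction hypothesis makes $K$ analytically unipotent, and the analytic sequence $0 \to K \to E \to \O \to 0$ exhibits $E$ as an analytic extension of $\O$ by an analytically unipotent bundle, completing the induction.

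The genuinely delicate point is the descent of the subbundle $E'$ to the analytic site, and this is exactly where full faithfulness of $\nu^{\ast}$ is indispensable. A priori there is no reason for a $v$-subbundle of an analytic bundle to be analytic: already for extensions of $\O$ by $\O$ the Hodge--Tate sequence \eqref{HTseq} exhibits $H^1_\an(X,\O)$ as a proper subspace of $H^1_v(X,\O)$ whenever $H^0(X,\wt\Omega^1) \neq 0$, so most $v$-extensions are non-analytic. What rescues the argument is that we are handed an analytic total space together with an analytic trivial quotient: the projection $\pi$ then descends by full faithfulness, and once the quotient map is both analytic and surjective, its kernel is forced to be analytic as well.
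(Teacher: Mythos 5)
Your proof is correct, but it runs the induction in the opposite direction from the paper, and this changes which technical input is needed. The paper peels off the \emph{bottom} of the $v$-unipotent filtration: it descends the injection $\O \injto E$ via \cref{l:an-to-v-vb-fully-faithful}, forms the coherent quotient $E'' = E/\O$ on $X_\an$, identifies $\nu^* E''$ with the $v$-unipotent quotient by right-exactness of $\nu^*$, and then must invoke \cref{rslt:v-loc-free-implies-analytic-loc-free} to conclude that $E''$ is analytically locally free --- that lemma (descent of local freeness along $X_v \to X_\an$, proved via toric towers and universal injectivity of $A \to A_\infty$) is the technical heart of the paper's argument. You instead peel off the \emph{top} quotient $E \surjto \O$, descend the surjection by full faithfulness, verify analytic surjectivity by testing fibres at classical points (right-exactness of pullback plus Nakayama and the Jacobson property of affinoid algebras), and then the kernel of a surjection onto a line bundle is automatically locally free, so you never need \cref{rslt:v-loc-free-implies-analytic-loc-free} at all. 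This makes your proof of this particular lemma more elementary and self-contained; the trade-off is only architectural, since the paper reuses \cref{rslt:v-loc-free-implies-analytic-loc-free} later (in the proofs of \cref{rslt:vb-on-A-are-unipotent-times-lb} and \cref{rslt:Higgs-on-A-are-unipotent-times-lb}), so it cannot be dispensed with globally. One small imprecision in your closing remark: the Hodge--Tate example produces $v$-extensions of $\O$ by $\O$ whose \emph{total space} is non-analytic, i.e.\ analytic subbundles of non-analytic bundles, which is not literally an example of a non-analytic $v$-subbundle of an analytic bundle (indeed, by \cref{rslt:analyticity-of-sub-LB-of-VB} a $v$-line subbundle of an analytic bundle is automatically analytic); this does not affect your proof, which never relies on that claim.
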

	We will therefore in the following just speak of ``unipotent'' analytic or $v$-vector bundles.
	\begin{proof} We only have to show that every $v$-unipotent bundle $E$ is also analytically unipotent. 
		We prove this by induction on the rank $n$ of $E$. Since $E$ is unipotent in the $v$-topology, we can find a $v$-topological extension
		\begin{align*}
			0 \to \O \xto{\varphi} E \to E' \to 0,
		\end{align*}
		where $E'$ is an unipotent $v$-bundle of smaller rank. By Lemma~\ref{l:an-to-v-vb-fully-faithful}, the morphism $\varphi\colon \O \to E$ exists already in the analytic topology, where it is clearly still injective. Let now $E'' := E / \O$ in the analytic topology. Then $E''$ is coherent on $X_\an$. By right-exactness of $\nu^*$ we have $\nu^* E''= E'$. By \cref{rslt:v-loc-free-implies-analytic-loc-free} below it follows that $E''$ is analytically locally free, necessarily of rank $n-1$. We win by induction.
	\end{proof}
	
	\begin{Lemma} \label{rslt:v-loc-free-implies-analytic-loc-free}
		Let $X$ be a smooth rigid space over $K$ and let $F$ be a coherent sheaf on $X_\an$. Then $F$ is locally free on $X_\an$ if and only if its pullback to $X_v$ is $v$-locally free.
	\end{Lemma}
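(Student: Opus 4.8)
The forward implication is immediate: if $F$ is analytically locally free then, locally on $X_\an$, we have $F\cong \O_{X_\an}^n$ and hence $\nu^\ast F\cong \O_v^n$ is $v$-locally free. So the content is the converse. Since local freeness in either topology is a local question on $X_\an$, my plan is to reduce to $X=\Spa(A)$ a smooth affinoid, so that $A$ is a reduced, Noetherian, Jacobson $K$-algebra. Write $F=\wt M$ for a finitely presented $A$-module $M$ (coherence gives finite presentation as $A$ is Noetherian), and suppose $\nu^\ast F$ is $v$-locally free; since its rank is locally constant on $|X|$, after shrinking $X$ we may assume it is a constant $n$.

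The crucial step is to compute the analytic fibres of $F$ at the classical points and to show that they all have dimension $n$. Fix $x\in X(K)=\Max(A)$; since $K$ is algebraically closed the residue field at $x$ is $K$, so the analytic fibre is $F(x)=M\otimes_A K$. The point $x$ corresponds to a map of adic spaces $\Spa(K)\to X$ and hence to a $v$-point $i_x\colon \ast=\Spa(K)^\diamond\to X_v$ whose composite with $\nu$ is again $x$. Therefore $i_x^\ast(\nu^\ast F)=x^\ast F=F(x)$. On the other hand $i_x^\ast(\nu^\ast F)$ is the pullback of a rank-$n$ $v$-vector bundle to the geometric point $\ast$, and $v$-vector bundles on $\Spa(K)$ are nothing but finite-dimensional $K$-vector spaces; hence $i_x^\ast(\nu^\ast F)$ is free of rank $n$. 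Comparing the two computations gives $\dim_K F(x)=n$ for every $x\in\Max(A)$.

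It then remains to deduce analytic local freeness from this constancy of fibre dimensions, which I would carry out through Fitting ideals. Recall that a finitely presented module is locally free of rank $n$ exactly when $\mathrm{Fit}_{n-1}(M)=0$ and $\mathrm{Fit}_n(M)=A$, and that $\dim_{k(\p)}M\otimes k(\p)\le i$ holds if and only if $\mathrm{Fit}_i(M)\not\subseteq\p$. From $\dim_K F(x)=n\le n$ at every maximal ideal we get $\mathrm{Fit}_n(M)\not\subseteq\m$ for all $\m\in\Max(A)$, and since $A$ is Jacobson this forces $\mathrm{Fit}_n(M)=A$. From $\dim_K F(x)=n\ge n$ at every maximal ideal we get $\mathrm{Fit}_{n-1}(M)\subseteq\m$ for all $\m$, hence $\mathrm{Fit}_{n-1}(M)\subseteq\bigcap_{\m}\m=\sqrt{0}=0$, using once more that $A$ is Jacobson and now that it is reduced (as $X$ is smooth). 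The Fitting criterion then shows $M$ is locally free of rank $n$, i.e.\ $F$ is an analytic vector bundle.

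The main obstacle is the fibre comparison in the second paragraph: one must verify that pulling $\nu^\ast F$ back along the $v$-point $i_x$ recovers the analytic fibre $F(x)$, and that $v$-vector bundles on a geometric point are trivial, so that the $v$-rank is detected by the classical analytic fibres. Once this bridge between the $v$-topology and the analytic fibres is in place, the remainder is standard commutative algebra over the Jacobson affinoid $A$.
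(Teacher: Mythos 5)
Your proof is correct in its essentials, but it follows a genuinely different route from the paper's. The paper also reduces to a smooth affinoid $X=\Spa(A,A^+)$, but then argues by descent along one big cover: smoothness provides a toric chart, hence a pro-finite-\'etale affinoid perfectoid cover $X_\infty=\Spa(A_\infty,A_\infty^+)\to X$; by Kedlaya--Liu \cite[Theorem 3.5.8]{relative-p-adic-hodge-2} the restriction of $\nu^\ast F$ to $X_\infty$ comes from the finite projective $A_\infty$-module $M\tensor_A A_\infty$; and finite projectivity then descends along $A\to A_\infty$ because this map is split in $A$-modules, hence universally injective, so \cite[Theorem 08XD]{StacksProject} applies. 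You instead test local freeness fibrewise: you invoke Kedlaya--Liu only over the points $\Spa(K)\to X$, where it says that $v$-vector bundles are finite-dimensional $K$-vector spaces, deduce that $\dim_K M\tensor_A k(x)$ equals the $v$-rank at every classical point (your fibre comparison $i_x^\ast\nu_X^\ast F=\nu_K^\ast x^\ast F$ is indeed just functoriality of pullback along a commutative square of ringed sites), and finish with the Fitting-ideal criterion over the Noetherian, Jacobson, reduced ring $A$; that commutative algebra is sound as stated. Note that your argument uses smoothness only through reducedness of $A$, so it actually proves the lemma for every reduced rigid space, whereas the paper's splitting trick genuinely uses the smooth chart. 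Reducedness cannot be dropped altogether, since diamondification kills nilpotents: for $A=K[\epsilon]/(\epsilon^2)$ and $M=A/(\epsilon)$ one has $\nu^\ast\wt M\cong\O_v$, which is $v$-locally free while $M$ is not.

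The one step you assert without justification is that the rank of $\nu^\ast F$ is locally constant on $|X|$, which is what allows you to shrink $X$ to a single rank $n$; without it, the pointwise equality of fibre dimension and $v$-rank gives an $n$ that could a priori vary from point to point even on connected $X$ (upper semicontinuity of $x\mapsto\dim_K M\tensor_A k(x)$ alone does not rule this out), and the Fitting argument needs one fixed $n$. The claim is true, but it is a fact about diamonds rather than formal nonsense about ringed sites: choose a $v$-cover $Y\to X$ by a disjoint union of affinoid perfectoid spaces trivializing $\nu^\ast F$; the resulting rank function on $|Y|$ is locally constant, its two pullbacks to $|Y\times_X Y|$ agree because the two pullbacks of the bundle are isomorphic, and $|X|=|Y|/|Y\times_X Y|$ as topological spaces by the construction of the underlying space of a $v$-sheaf \cite[\S 12]{etale-cohomology-of-diamonds}; hence the rank descends to a locally constant function on $|X|$. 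With that sentence inserted your proof is complete. As for what each approach buys: the paper's descent argument is uniform, avoids all point-set and fibre-rank considerations, and isolates a reusable purity statement about $A\to A_\infty$; yours uses the $v$-topology only at geometric points, is more elementary on that side, and yields the stronger statement for reduced (not necessarily smooth) spaces.
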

	\begin{proof}
		We only need to show the ``if'' direction, so let us assume that the pullback $\nu^* F$ of $F$ to $X_v$ is $v$-locally free. We need to show that then $F$ is locally free on $X_\an$. This statement is local on $X$, so we can assume that $X = \Spa(A, A^+)$ is affinoid and admits an étale map to a torus $T$ that is a composition of rational localizations and finite étale maps. Let $T_\infty \to T$ be the affinoid pro-\'etale toric tower obtained by adjoining all $p$-power roots of the coordinates. By  \cite[Lemma~4.5]{Scholze_p-adicHodgeForRigid}, base-changing to $X$ yields a pro-finite-étale cover $X_\infty \to X$ of $X$ by an affinoid perfectoid space $X_\infty = \Spa(A_\infty, A^+_\infty)$.
		
		By \cite[Theorem 2.3.3]{relative-p-adic-hodge-2}, $F$ corresponds to a finite $A$-module $M$. Since $\nu^* F$ is $v$-locally free by assumption, \cite[Theorem 3.5.8]{relative-p-adic-hodge-2} (see also \cite[Lemma 17.1.8]{SW_Berkeley}) implies that $(\nu^* F)|_{X_\infty}$ is analytic locally free and corresponds to a finite projective module $M_\infty$. Clearly $M_\infty = M \tensor_A A_\infty$. We now use that the morphism $A\to A_\infty$ is split in the category of $A$-modules: In our situation, this holds because one can pull back splittings for the toric tower using the explicit descriptions in \cite[Lemma~4.5]{Scholze_p-adicHodgeForRigid}, but we mention that such splittings exist more generally, see \cite[Lemma 7.6]{HK_sheafiness}. In particular, $A\to A_\infty$ is universally injective, so finite projective modules descend along $A \to A_\infty$ (see e.g. \cite[Theorem 08XD]{StacksProject}). It follows that $M$ is finite projective and hence that $F$ is locally free, as desired.
	\end{proof}
	
	The following analyticity criterion will be helpful when working with analytic vector bundles as $v$-vector bundles:
	\begin{Lemma}[{\cite[Corollary~3.5]{heuer-v_lb_rigid}}]\label{rslt:analyticity-of-sub-LB-of-VB} Let $X$ be a connected smooth rigid space.
		Let $V$ be an analytic vector bundle and let $L$ be a $v$-line bundle on $X$. Suppose that there is a non-trivial map $L\to V$. Then $L$ is analytic.
	\end{Lemma}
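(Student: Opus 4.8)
The plan is to detect analyticity of the $v$-line bundle $L$ through its Hodge--Tate--log class and to verify the vanishing of this class locally. Recall from the left-exact sequence
\[
0 \to \Pic_{\an}(X) \to \Pic_v(X) \xrightarrow{\HT\log} H^0(X, \wtOm^1)
\]
of \S\ref{sec:prelim-simpson-for-line-bundles} that a $v$-line bundle $M$ on $X$ is analytic if and only if $\HT\log(M) = 0$, and that $\HT\log$ is natural with respect to restriction to opens. Since $X$ is smooth and connected, it is irreducible, so $\wtOm^1$ is a locally free (in particular torsion-free) analytic sheaf and the restriction $H^0(X,\wtOm^1) \to H^0(U,\wtOm^1)$ is injective for every nonempty, hence dense, open $U \subseteq X$. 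Consequently it suffices to produce a single nonempty open on which the relevant bundle becomes analytic: if $M|_U$ is analytic then $\HT\log(M)|_U = \HT\log(M|_U) = 0$ forces $\HT\log(M) = 0$, and hence $M$ is analytic on all of $X$.

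To produce such an open, I would first dualise. Since $V$ is analytic, so is $V^\vee$, and the dual map $\varphi^\vee \colon V^\vee \to L^{-1}$ is again nonzero, as $(-)^{\vee\vee} = \id$. Choose a connected affinoid open $U$ on which $V^\vee$ is trivial, say $V^\vee|_U \cong \O^n$, and on which $\varphi^\vee$ is nonzero; such a $U$ exists because $\varphi^\vee \neq 0$. Composing $\varphi^\vee|_U$ with one of the coordinate inclusions $\O \hookrightarrow \O^n$ then yields a nonzero section $s \colon \O \to L^{-1}$ of the $v$-line bundle $L^{-1}$ over $U$. On the nonempty locus $U_0 \subseteq U$ where $s$ does not vanish, $s$ is an isomorphism onto $L^{-1}$, so $L^{-1}|_{U_0} \cong \O$ is trivial, in particular analytic. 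By the previous paragraph, applied to $M = L^{-1}$, this gives $\HT\log(L^{-1}) = 0$, so $L^{-1}$ and therefore $L$ is analytic, as desired.

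The main obstacle is precisely the globalisation in the first paragraph, i.e.\ the passage from ``analytic on a nonempty open'' to ``analytic on $X$''. This is exactly what the Hodge--Tate--log sequence buys us: it converts analyticity into the vanishing of a global section of the honest analytic vector bundle $\wtOm^1$, for which the identity principle on the irreducible space $X$ applies. (Alternatively, one could avoid the non-vanishing locus altogether and argue directly from the naturality of $\HT\log$: a nonzero section $s\colon \O \to L^{-1}$ intertwines the zero ``Higgs field'' of $\O$ with that of $L^{-1}$, forcing $\HT\log(L^{-1})\cdot s = 0$ in $L^{-1}\otimes\wtOm^1$, and since $s\neq 0$ on a dense open one again concludes $\HT\log(L^{-1})=0$.) The only genuinely technical point is to make sense of the non-vanishing locus $U_0$ of $s$ in the $v$-topology, which is routine: pulling $s$ back along a surjection from a perfectoid cover of $U$ on which $L^{-1}$ trivialises turns $s$ into an honest function, whose non-vanishing locus is open and descends to the required open $U_0 \subseteq U$.
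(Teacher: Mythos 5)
This lemma is never proved in the paper: it is quoted as \cite[Corollary~3.5]{heuer-v_lb_rigid}, so there is no internal argument to compare yours against, and your proposal has to be judged on its own terms. On those terms it is correct. Your route rests on the one ingredient of that reference which the paper does record, namely the left-exact sequence $0 \to \Pic_{\an}(X) \to \Pic_v(X) \xrightarrow{\HT\log} H^0(X,\wtOm^1)$ of \cref{sec:prelim-simpson-for-line-bundles}, valid for every smooth rigid space: it converts ``analytic'' into vanishing of a global section of the honest bundle $\wtOm^1$, and since a connected smooth rigid space is irreducible (smooth implies normal, and connected normal rigid spaces are irreducible), such sections are determined by their restriction to any nonempty open. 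The reduction producing the open is also sound: dualising gives a nonzero $\varphi^\vee\colon V^\vee \to L^{-1}$; some member $U$ of an analytic trivialising cover of $V^\vee$ has $\varphi^\vee|_U \neq 0$ because analytic covers are $v$-covers and morphisms of $v$-sheaves are determined $v$-locally; this yields a nonzero section $s$ of $L^{-1}$ over $U$. Three points are asserted rather than proved, all genuinely routine but worth recording: (i) naturality of $\HT\log$ under restriction to opens, which the paper does not state but which holds by functoriality of the construction in \cite{heuer-v_lb_rigid}; (ii) nonemptiness of the non-vanishing locus $U_0$ --- a nonzero section cannot vanish at every point, since its pullback to a trivialising affinoid perfectoid cover is a nonzero element of a uniform, hence reduced, Tate ring; (iii) openness and descent of $U_0$, which needs that $|U|$ carries the quotient topology along a quasicompact $v$-cover \cite{etale-cohomology-of-diamonds}, together with the fact that an everywhere non-vanishing function on an affinoid perfectoid space is invertible, so that $s$ really is an isomorphism over $U_0$. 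The only step I would reject as written is the parenthetical alternative ``$\HT\log(L^{-1})\cdot s = 0$'', which presupposes an unproved compatibility between $\HT\log$ and Higgs-type operators on pushforwards; your main argument does not use it.
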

	
	\subsection{Classification of pro-finite-\'etale vector bundles on abeloids}
	We now come to the promised structure result for pro-finite-\'etale vector bundles on abeloid varieties. This is an analog of the following classical result.	Let $A$ be an abelian variety over an algebraically closed field $F$. Then a vector bundle $E$ on $A$ is called homogeneous if it satisfies $x^{\ast}E=E$ for all $x\in A(F)$. We make analogous definitions for complex tori over $F=\C$, as well as abeloids over $K$. By a theorem of Matsushima, Morimoto \cite{Morimoto}\cite{Matsushima-Morimoto} (in the setting of complex tori) and Miyanishi \cite{Miyanishi}, Mukai \cite[Theorem 4.17]{Mukai_semi-homog-VB_on_AV} (in the setting of abelian varieties), a vector bundle $E$ on $A$ is homogeneous if and only if it decomposes as a direct sum
	\[ E = \bigoplus_{i=1}^n (U_i\tensor L_i),\]
	where each $U_i$ is unipotent and each $L_i$ is a homogeneous line bundle.
	
	We now prove an analogous result for pro-finite-\'etale vector bundles on abeloids:
	\begin{Theorem}
		\label{rslt:vb-on-A-are-unipotent-times-lb}
		Let $A$ be an abeloid variety over $K$ and let $E$ be a $v$-vector bundle on $A$.
		\begin{propenum}
			\item $E$ is pro-finite-\'etale if and only if it decomposes as a direct sum
			\begin{align*}
				E = \bigoplus_{i=1}^n (U_i\tensor L_i),
			\end{align*}
			where each $U_i$ is a unipotent $v$-vector bundle and each $L_i$ is a pro-finite-\'etale $v$-line bundle on $A$. For each factor, both $L_i$ and $U_i$ are unique up to isomorphism.
			
			\item If $E$ is moreover analytic, then
			each $U_i$ is a unipotent analytic vector bundle and each $L_i$ is a pro-finite-\'etale analytic line bundle on $A$.
		\end{propenum}
	\end{Theorem}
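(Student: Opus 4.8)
The plan is to reduce everything to the representation theory of the abelian group $\pi_1(A,0)=TA$ via the tensor equivalence of \cref{rslt:profet-vb-vs-Reps}, and then to feed the resulting decomposition back through that equivalence. I would dispatch the easy ``if'' direction of (i) first: if $E=\bigoplus_i(U_i\tensor L_i)$ with $U_i$ unipotent and $L_i$ pro-finite-\'etale, then each $L_i$ is trivial on $\wt A$ by definition, while each $U_i$ is trivial on $\wt A$ because $H^1_v(\wt A,\O)=0$ (\cref{rslt:properties-of-wt-X}) forces every extension of trivial bundles to split there; hence $E$ is trivial on $\wt A$, i.e.\ pro-finite-\'etale.

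For the ``only if'' direction I would transport $E$ to a continuous representation $\rho\colon TA\to\GL(W)$ under \cref{rslt:profet-vb-vs-Reps}. Since $TA$ is abelian, the operators $\{\rho(g)\}_{g\in TA}$ form a commuting family on the finite-dimensional space $W$ over the algebraically closed field $K$, so $W$ admits a finite simultaneous generalised eigenspace decomposition $W=\bigoplus_\chi W_\chi$, indexed by the distinct joint generalised characters, where on $W_\chi$ every $\rho(g)$ has the single generalised eigenvalue $\chi(g)$. I would then check that each $\chi\colon TA\to K^\times$ is a genuine continuous character: multiplicativity follows because the generalised eigenvalue of a product of commuting operators is the product of the eigenvalues, and continuity follows from $\chi(g)=\tr(\rho(g)|_{W_\chi})/\dim_K W_\chi$ together with continuity of $\rho$. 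On each block one has $\rho|_{W_\chi}=\chi\tensor u_\chi$ with $u_\chi$ a unipotent representation.

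Now I would push this back through \cref{rslt:profet-vb-vs-Reps}. As that equivalence is exact and monoidal, the character $\chi$ corresponds to a pro-finite-\'etale $v$-line bundle $L_\chi$, the unipotent representation $u_\chi$ corresponds to a unipotent $v$-vector bundle $U_\chi$ (a successive extension of $\O$, as trivial representations go to $\O$ and extensions are preserved), and the tensor and direct-sum structures are respected, yielding $E\cong\bigoplus_\chi(U_\chi\tensor L_\chi)$. Uniqueness up to isomorphism follows from the canonicity of the generalised eigenspace decomposition: the character of a summand is intrinsically characterised as the unique twist making $L_\chi^{-1}\tensor(U_\chi\tensor L_\chi)$ unipotent, after which $U_\chi$ is determined. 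I expect the only genuinely delicate point to be the verification that $\chi$ is continuous and that the decomposition is stable under \emph{all} of $TA$ rather than merely a set of topological generators; phrasing the generalised eigenspace decomposition with respect to the entire commuting family $\rho(TA)$ makes this automatic.

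Finally, for (ii) assume $E$ is analytic, say $E=\nu^\ast E_0$. The projections $p_\chi$ onto the blocks $W_\chi$ are $\pi_1$-equivariant and hence define idempotents in $\End_v(E)$; but by full faithfulness (\cref{l:an-to-v-vb-fully-faithful}) we have $\End_v(E)=\End_{\an}(E_0)$, so each $p_\chi$ is an analytic idempotent and its image $E_\chi=U_\chi\tensor L_\chi$ is an analytic direct summand of $E$. Restricting the bottom step of the unipotent filtration of $U_\chi$ produces a nonzero map $L_\chi\to E_\chi\hookrightarrow E$ from a $v$-line bundle into an analytic bundle, so $L_\chi$ is analytic by \cref{rslt:analyticity-of-sub-LB-of-VB}; then $U_\chi\cong L_\chi^{-1}\tensor E_\chi$ is analytic, and being $v$-unipotent it is analytically unipotent by \cref{rslt:unipotent-plus-analytic-implies-analytically-unipotent}. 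This completes the argument.
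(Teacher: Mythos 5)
Your proof is correct. The overall skeleton agrees with the paper's (both reduce to continuous representations of $TA$ via \cref{rslt:profet-vb-vs-Reps} and both prove the ``if'' direction by trivializing unipotent bundles on $\wt A$), but your mechanism for the key splitting step in (i) is genuinely different. The paper extracts only a simultaneous eigenvector on the representation side, concluding that $E$ is a successive extension of pro-finite-\'etale $v$-line bundles, and then performs the decomposition into isotypic blocks back on the bundle side using the vanishing $\Ext^1_v(L_1,L_2)=0$ for $L_1\neq L_2$ (\cref{rslt:Ext^1(L_1-L_2)}), which rests on the cohomological computation $H^i_v(A,L)=0$ for non-trivial pro-finite-\'etale $L$ (\cref{rslt:H^1_v(A-L)}, proved via Cartan--Leray along $\wt A^p \to A$). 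You instead carry out the entire decomposition inside $\Rep_K(TA)$ via the joint generalised eigenspace decomposition of the commuting family $\rho(TA)$, and only then transport back through the exact tensor equivalence; this bypasses \cref{rslt:H^1_v(A-L)} and \cref{rslt:Ext^1(L_1-L_2)} completely, replacing them by linear algebra. Your verifications are sound: multiplicativity of $\chi$ follows from the eigenvalue of a product of commuting operators, the trace formula gives continuity, and your remark about forming the decomposition with respect to all of $\rho(TA)$ rather than a set of topological generators addresses the one real pitfall. What your route buys is a more elementary, self-contained argument for (i) that also makes the uniqueness assertion explicit (the paper leaves it essentially implicit); what the paper's route buys is that its cohomological lemmas are needed anyway -- \cref{rslt:H^1_v(A-L)} is reused for full faithfulness in \cref{rslt:decomposition-of-profet-vb-category} and for the Higgs-side analogue \cref{rslt:no-Ext-of-Higgs-line-bundles} -- and that it mirrors the classical proof for homogeneous bundles on abelian varieties. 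Your part (ii) is essentially the paper's argument: analyticity of each direct factor via full faithfulness (the paper realises the factor as the kernel of an analytic endomorphism and invokes \cref{rslt:v-loc-free-implies-analytic-loc-free}; your idempotent formulation is equivalent, and in fact slightly cleaner since additive functors preserve split exact sequences), followed by \cref{rslt:analyticity-of-sub-LB-of-VB} for the $L_i$ and \cref{rslt:unipotent-plus-analytic-implies-analytically-unipotent} for the $U_i$.
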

	\begin{Remark}\label{rem:homogeneous}
		It is easy to see that any unipotent vector bundle is homogeneous.
		The Theorem therefore implies that any pro-finite-\'etale vector bundle on an abeloid variety $A$ is homogeneous. The converse is true for abelian varieties (and probably also abeloids) over $\C_p$, but not over more general fields: already in the case of line bundles one has to instead impose the condition that $E$ corresponds to a topological torsion point of $A^\vee(K)^{\tt}$ (see \cref{sec:prelim-simpson-for-line-bundles}), which is stronger than being homogeneous.  
		
	\end{Remark}
	
	Most of the remaining part of this section is devoted to presenting a proof of \cref{rslt:vb-on-A-are-unipotent-times-lb}. At the end (see \cref{rslt:decomposition-of-profet-vb-category}) we also give a slightly different categorical interpretation of this result.
	
	\begin{Lemma} \label{rslt:H^1_cts(whZ-K)}
		Let $\chi:\Z^n_p\to K^\times$ be a continuous character and consider $K$ endowed with the $\Z_p^n$-action via $\chi$. Then for any $d\geq 0$,
		\begin{align*}
			H^d_{\cts}(\Z_p^n,K)=\begin{cases}\wedge^dK^n\quad& \text{ for }\chi=1,\\0& \text{ for }\chi\neq 1.\end{cases}
		\end{align*}
	\end{Lemma}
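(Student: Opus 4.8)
The plan is to compute the continuous cohomology by means of an explicit Koszul complex built from the commuting operators $\gamma_i - 1$, where $\gamma_1,\dots,\gamma_n$ denote the standard topological generators of $\Z_p^n$. Write $\lambda_i := \chi(e_i)\in K^\times$ for the image of the $i$-th generator. Since $\Z_p^n$ is topologically generated by $e_1,\dots,e_n$ and $\chi$ is continuous, one has $\chi=1$ if and only if $\lambda_i=1$ for all $i$. On the one-dimensional $K$-vector space $K$, the operator $\gamma_i-1$ acts simply as multiplication by the scalar $\lambda_i-1$, and these scalar operators commute.

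First I would record the base case $n=1$. For the procyclic group $\Z_p$ with topological generator $\gamma$ and coefficients in a complete topological $K$-vector space $V$, continuous cohomology is computed by the two-term complex $[\,V\xrightarrow{\gamma-1}V\,]$, so that $H^0_\cts(\Z_p,V)=\ker(\gamma-1)$, $H^1_\cts(\Z_p,V)=\coker(\gamma-1)$, and $H^i_\cts(\Z_p,V)=0$ for $i\ge 2$; this is the classical Tate computation and uses only completeness of $K$. Taking $V=K$ with $\gamma$ acting by $\lambda_1$ settles $n=1$ at once.

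Next I would induct on $n$ via the Hochschild--Serre spectral sequence for the extension $0\to\Z_p\to\Z_p^n\to\Z_p^{n-1}\to0$, with $\Z_p$ the last factor acting through $\lambda_n$. By the base case the inner cohomology $H^q_\cts(\Z_p,K)$ is concentrated in degrees $q=0,1$, so the spectral sequence degenerates; combined with the inductive description of the $\Z_p^{n-1}$-cohomology this assembles the iterated two-term complexes into their total tensor product $\bigotimes_{i=1}^n[\,K\xrightarrow{\lambda_i-1}K\,]$, which is precisely the Koszul complex $\mathrm{Kos}(\lambda_1-1,\dots,\lambda_n-1;K)$, i.e.\ $\wedge^\bullet K^n$ with differential given by contraction against $(\lambda_1-1,\dots,\lambda_n-1)$. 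Hence $H^d_\cts(\Z_p^n,K)=H^d\big(\wedge^\bullet K^n\big)$.

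Finally I would evaluate this Koszul complex in the two regimes. If $\chi=1$ then every $\lambda_i-1=0$, all differentials vanish, and $H^d=\wedge^d K^n$ as claimed. If $\chi\neq 1$ then some $\lambda_{i_0}-1\neq 0$ is a unit in the field $K$, and the Koszul complex of a sequence one of whose entries is a unit is contractible, so $H^d=0$ in every degree. The only genuine inputs beyond formal homological algebra are the base-case vanishing $H^{\ge 2}_\cts(\Z_p,V)=0$ together with the shape of the two-term complex, and the one point requiring mild care is the convergence and degeneration of Hochschild--Serre for continuous cohomology with these topological coefficients; both are standard, so I anticipate no serious obstacle.
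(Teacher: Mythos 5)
Your core engine is the same as the paper's: everything is reduced to the Koszul complex $\bigotimes_{i=1}^n\bigl[K \xrightarrow{\lambda_i-1} K\bigr]$, which is then evaluated exactly as in the paper — trivial character gives $\wedge^\bullet K^n$, nontrivial character gives acyclicity because some $\lambda_{i_0}-1$ is a unit (the paper phrases this via the Künneth decomposition of the tensor product of two-term complexes, you phrase it via contractibility; these are the same computation). The difference is how you arrive at the Koszul description: the paper invokes it directly for all $n$, justified by the argument of \cite[Proof of Lemma~5.5]{Scholze_p-adicHodgeForRigid} (compute with $\O_K=\varprojlim\O_K/p^m$ coefficients, where one is doing ordinary cohomology of discrete modules, then invert $p$), whereas you try to build it up by induction on $n$ using Hochschild--Serre.

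That induction step is where your proof has a genuine gap: the assertion that the spectral sequence degenerates \emph{because} the inner cohomology sits in degrees $q=0,1$ is not valid — a two-row spectral sequence can carry nonzero differentials $d_2\colon E_2^{p,1}\to E_2^{p+2,0}$, and the case where this threatens you is precisely the one you cannot afford to lose, namely $\chi=1$, where by induction both rows equal $\wedge^\bullet K^{n-1}$ and are nonzero. (When $\lambda_n\neq1$, or when $\chi$ is nontrivial on the first $n-1$ factors, the whole $E_2$-page vanishes and degeneration is vacuous.) The repair is short but must be made: the extension $0\to\Z_p\to\Z_p^n\to\Z_p^{n-1}\to0$ is split, so inflation $H^p_{\cts}(\Z_p^{n-1},K)\to H^p_{\cts}(\Z_p^n,K)$ admits a retraction and is in particular injective; since the bottom-row terms $E_r^{p,0}$ can only ever be quotiented by incoming differentials, injectivity of the edge map forces all $d_2$ into the bottom row to vanish, and these are the only possible differentials in a two-row spectral sequence. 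The extension problem on the abutment is then harmless since all terms are $K$-vector spaces and $\binom{n-1}{d}+\binom{n-1}{d-1}=\binom{n}{d}$. A second, lesser issue: Hochschild--Serre for continuous cochain cohomology with the non-discrete coefficient module $K$ is not an off-the-shelf statement (continuous cohomology is not a derived functor in this generality), and the standard way to justify both it and your base case is exactly the reduction the paper cites — note $\chi$ takes values in $\O_K^\times$ by compactness of $\Z_p^n$, so the action preserves $\O_K$, and one works modulo $p^m$, passes to the limit, and inverts $p$. So your route does not actually avoid that input; with these two points repaired, your argument is correct and amounts to a repackaging of the paper's proof.
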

	\begin{proof}
		Let $\gamma_1,\dots,\gamma_n$ be the images of the standard basis of $\Z_p^n$ under $\chi$.
		We recall that in general, $R\Gamma_{\cts}(\Z_p^n,K)$ is computed by the Koszul complex
		\[K(\gamma_1,\dots,\gamma_n):= \Big[K\to K^n\to \dots\to \wedge^dK^n\to \dots K^n\to K\Big].\]
		This follows from the argument in \cite[Proof of Lemma~5.5]{Scholze_p-adicHodgeForRigid} by first treating $\O_K=\varprojlim \O_K/p^n$  and then inverting $p$.
		
		In the case of $n = 1$, this complex is simply the multiplication $(\gamma_1-1):K\to K$
		and we deduce that
		\begin{align*}
			H^1_{\cts}(\Z_p,K) = K/(\gamma_1-1)K.
		\end{align*}
		This is trivial unless $\gamma_1$ acts trivially, thus $H^1_{\cts}(\Z_p,K)$ is as described. The statement for $d=0$ is clear, and cohomology vanishes for $d>1$.
		
		To deduce the general case, we first note  that
		\[ K(\gamma_1,\dots,\gamma_n)=K(\gamma_1-1)\otimes \dots\otimes K(\gamma_n-1).\]
		Inductively, the double complex spectral sequence now shows that
		\[ H^d_{\cts}(\Z_p^n,K)=\bigoplus_{i_1+\dots+i_n=d}H^{i_1}_{\cts}(\Z_p,K)\otimes \dots \otimes H^{i_n}_{\cts}(\Z_p,K).\]
		We deduce from the case of $n=1$ that this is zero unless each $\gamma_i$ acts trivially, in which case it is $\wedge^dK^n$, as desired.
	\end{proof}
	
	For the following results, we need an intermediate pro-finite-\'etale pro-$p$-cover defined as follows. 
	\begin{Definition}\label{d:p-adic-cover}
		Let $A$ be an abeloid over $K$. Then we define the $p$-adic universal cover of $A$ as
		\begin{align*}
			\wt A^p := \varprojlim_{[p]}A\to A.
		\end{align*}
	\end{Definition}
	Similarly as for $\wt A$, we have (we caution that our $\wt A^p$ is what is denoted by $\wt A$ in \cite{heuer-Picard-good-reduction}):
	
	\begin{Proposition}[{\cite[Theorem~1]{perfectoid-covers-Arizona}, \cite[Proposition~4.2]{heuer-Picard-good-reduction}}]\label{rslt:p-adic-cover-O-acyclic}
		The space $\wt A^{p}$ is perfectoid and satisfies for $i\geq 1$:
		\[H^i_v(\wt A^p,\Z_p)=0,\quad H^i_v(\wt A^p,\O^+)\aeq 0.\]
	\end{Proposition}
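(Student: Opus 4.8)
The plan is to separate the two assertions: the perfectoidness of $\wt A^p$ is the geometric heart of the statement, whereas the cohomological vanishing can then be deduced formally by comparing $\wt A^p$ with the full universal cover $\wt A$, whose cohomology was already recorded in \cref{rslt:properties-of-wt-X} and \cref{rslt:vanishing-of-higher-cohom-on-wt-X-for-abeloid}.

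First I would establish that $\wt A^p$ is perfectoid, and this is where I expect the main obstacle to lie; it is exactly the content of \cite[Theorem~1]{perfectoid-covers-Arizona}. The idea is to reduce, via Raynaud uniformisation of the abeloid $A$, to two model cases. For the multiplicative part the tower $\varprojlim_{[p]}\G_m$ is the standard perfectoid toric tower $\Spa K\langle T^{\pm 1/p^\infty}\rangle$. For the good-reduction part one takes a formal abelian scheme model $\mathfrak A/\O_K$ and exploits that on the mod-$p$ reduction multiplication by $p$ factors through the relative Frobenius (since $[p]=V\circ F$); hence the $p$-adic completion of $\varprojlim_{[p]}\mathfrak A$ is a perfectoid formal scheme whose tilt is the inverse perfection of the reduction along Frobenius, and whose generic fibre is $\wt A^p$. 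I emphasise that this is the only step that genuinely uses the geometry of $A$.

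Granting perfectoidness, I would obtain the vanishing from the Cartan--Leray spectral sequence. Writing $N=p^am$ with $(m,p)=1$ identifies $\wt A=\varprojlim_{[N]}A$ with the fibre product over $A$ of the $p$-tower $\wt A^p$ and the prime-to-$p$ tower, so that $\wt A\to\wt A^p$ is a pro-finite-\'etale torsor under the prime-to-$p$ adelic Tate module $G:=\prod_{\ell\neq p}T_\ell A$, a profinite group all of whose finite quotients have order prime to $p$. The Cartan--Leray sequence of this torsor (as in \cite[Proposition~3.6]{heuer-v_lb_rigid}) reads
\[
E_2^{ij}=H^i_{\cts}\big(G,\, H^j_v(\wt A, F)\big)\ \Longrightarrow\ H^{i+j}_v(\wt A^p, F),
\]
for $F\in\{\Zp,\O^+\}$. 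For $F=\Zp$ we have $H^j_v(\wt A,\Zp)=0$ for $j\geq 1$ by \cref{rslt:properties-of-wt-X}, so the sequence collapses to $H^i_{\cts}(G,\Zp)$; since $G$ is pro-(prime-to-$p$) while $\Zp=\varprojlim\Z/p^n$ is pro-$p$, each $H^i_{\cts}(G,\Z/p^n)$ vanishes for $i\geq 1$ (the finite quotients of $G$ have order invertible on a $p$-power torsion module), and passing to the limit gives $H^i_{\cts}(G,\Zp)=0$ for $i\geq 1$. Hence $H^i_v(\wt A^p,\Zp)=0$ for $i\geq 1$.

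For $F=\O^+$ the argument is identical up to almost isomorphism. By \cref{rslt:vanishing-of-higher-cohom-on-wt-X-for-abeloid} and \cref{rslt:properties-of-wt-X} we have $H^j_v(\wt A,\O^+)\aeq 0$ for $j\geq 1$ and $H^0_v(\wt A,\O^+)\aeq\O_K$, so the spectral sequence yields $H^i_v(\wt A^p,\O^+)\aeq H^i_{\cts}(G,\O_K)$. Reducing modulo $p^n$, each $\O_K/p^n$ carries the trivial $G$-action and is $p$-power torsion, whence $H^i_{\cts}(G,\O_K/p^n)=0$ for $i\geq 1$ exactly as above; taking the limit over $n$ (the relevant $\lim^1$ terms vanish since the transition maps on $H^0$ are surjective) gives $H^i_{\cts}(G,\O_K)=0$ for $i\geq 1$, and therefore $H^i_v(\wt A^p,\O^+)\aeq 0$ for $i\geq 1$, recovering also \cite[Proposition~4.2]{heuer-Picard-good-reduction} in the good-reduction case. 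The only point requiring care is the bookkeeping of the almost structure through the spectral sequence, but this is routine since every input is already an almost isomorphism.
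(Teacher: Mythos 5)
The paper itself offers no proof of \cref{rslt:p-adic-cover-O-acyclic}: it is imported wholesale from \cite[Theorem~1]{perfectoid-covers-Arizona} and \cite[Proposition~4.2]{heuer-Picard-good-reduction} (note the paper's own caution that the latter reference is \emph{literally} a statement about $\wt A^p$, denoted $\wt A$ there). So your write-up has to be judged as a free-standing derivation. Your Part~1 is an accurate sketch of what the cited reference actually does, and delegating the perfectoidness there is reasonable. Your Part~2, however, takes the opposite route to the sources: in the literature the vanishing is proved for $\wt A^p$ first (via formal models and tilting), and the corresponding statements for the full cover $\wt A$ --- i.e.\ \cref{rslt:vanishing-of-higher-cohom-on-wt-X-for-abeloid} --- are then \emph{deduced} from it by passing to the limit over the prime-to-$p$ isogenies. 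Inside this paper your reduction is formally legitimate, since \cref{rslt:properties-of-wt-X} and \cref{rslt:vanishing-of-higher-cohom-on-wt-X-for-abeloid} are stated earlier as black boxes, and Cartan--Leray descent along the prime-to-$p$ torsor $\wt A \to \wt A^p$ is exactly the technique the paper uses in the proof of \cref{rslt:H^1_v(A-L)}. But unwound to the primary sources your argument is circular: it proves the implication ``\cref{rslt:vanishing-of-higher-cohom-on-wt-X-for-abeloid} $\Rightarrow$ \cref{rslt:p-adic-cover-O-acyclic}'', not an independent proof of the external input.

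There is also one concrete gap. For $F = \Zp$ you assert $H^j_v(\wt A,\Zp)=0$ for \emph{all} $j\geq 1$ ``by \cref{rslt:properties-of-wt-X}'', but that Proposition only treats $j\leq 1$, and for good reason: for a general connected smooth proper $X$ the higher cohomology of $\wt X$ does not vanish (e.g.\ $X=\P^1$ has $\wt X = X$ and $H^2_{\et}(\P^1,\Z/p^m)\neq 0$). Your spectral sequence needs every row $j\geq 1$ to vanish, so as written your conclusion $H^i_v(\wt A^p,\Zp)=0$ is only justified for $i=1$. The missing statement is true for abeloids, but it requires an abeloid-specific argument: pullback along $[N]$ is multiplication by $N$ on $H^1_{\et}(A,\Z/p^m)=\Hom(A[p^m],\Z/p^m)$ and hence by $N^j$ on $H^j_{\et}(A,\Z/p^m)$ (the cohomology being the exterior algebra on $H^1$, as for abelian varieties), so that
\[ H^j_{\et}(\wt A,\Z/p^m)=\varinjlim_{[N]}H^j_{\et}(A,\Z/p^m)=0 \quad \text{for } j\geq 1, \]
and one then takes the (derived) inverse limit over $m$. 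Once you have this argument, you may as well run it directly on the $p$-power tower, where $[p]^*$ acts by $p^j$ on $H^j_{\et}(A,\Z/p^m)$; this yields $H^i_v(\wt A^p,\Zp)=0$ for all $i\geq 1$ with no reference to $\wt A$ and no Cartan--Leray at all. The $\O^+$ half of your argument does not suffer from this issue, since \cref{rslt:vanishing-of-higher-cohom-on-wt-X-for-abeloid} covers all positive degrees, and the almost bookkeeping is indeed harmless --- note in fact that $H^0_v(\wt A,\O^+)=\O_K$ on the nose, because $\O(\wt A)=K$ --- subject, again, to the circularity caveat above.
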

	Next, we prove an analog of the classical lemma in the theory of abelian varieties that $\mathrm R\Gamma(A,L)=0$ for any non-trivial homogeneous line bundle $L$ on $A$, see \cite[Chapter 8, (vii)]{Mumford}.
	\begin{Lemma} \label{rslt:H^1_v(A-L)}
		Let $L\neq \O$ be a pro-finite-\'etale $v$-line bundle on an abeloid variety $A$ over $K$. Then
		\begin{align*}
			H^i_v(A,L)=0 \quad \text{for all $i\geq 0$}.
		\end{align*}
	\end{Lemma}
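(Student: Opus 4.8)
The plan is to transport the statement into continuous group cohomology via the pro-finite-\'etale universal cover $\wt A\to A$ and then compute. By \cref{rslt:profet-vb-vs-Reps} (applied to $X=A$, $x=0$), a pro-finite-\'etale $v$-line bundle $L$ corresponds to a continuous character $\rho\colon \pi_1(A,0)=TA\to K^\times$, and $L\cong\O$ precisely when $\rho=1$; so I may assume $\rho\neq 1$. Since $L$ trivializes on $\wt A$, the $TA$-representation $H^0(\wt A,L)=H^0(\wt A,\O)=K$ is just $K$ with $TA$ acting through $\rho$, which I write $K_\rho$. First I would run the Cartan--Leray spectral sequence for the pro-\'etale $TA$-torsor $\wt A\to A$ (\cref{rslt:properties-of-wt-X}, \cite[Proposition 3.6]{heuer-v_lb_rigid}), feeding in the vanishing $H^t_v(\wt A,\O)=0$ for $t\geq 1$ from \cref{rslt:vanishing-of-higher-cohom-on-wt-X-for-abeloid} (invert $p$ in the almost-vanishing of $\O^+$) together with $H^0(\wt A,\O)=K$. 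The $E_2$-page is then concentrated in the row $t=0$, so the sequence degenerates and yields
\[
	H^i_v(A,L)\cong H^i_{\cts}(TA,K_\rho)\qquad\text{for all }i\geq 0 .
\]
It therefore remains to show $H^i_{\cts}(TA,K_\rho)=0$ for all $i$ whenever $\rho\neq 1$.

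To compute the right-hand side I would split the adelic Tate module as $TA=T_pA\times T^{(p)}A$, where $T_pA\cong\Z_p^n$ with $n=2\dim A$ and $T^{(p)}A=\prod_{\ell\neq p}T_\ell A$ is the prime-to-$p$ part, and apply the Hochschild--Serre spectral sequence
\[
	H^s_{\cts}\bigl(T_pA,\,H^t_{\cts}(T^{(p)}A,K_\rho)\bigr)\Rightarrow H^{s+t}_{\cts}(TA,K_\rho).
\]
The key input is that the prime-to-$p$ factor contributes only in degree $t=0$. Concretely, I claim that $\rho^{(p)}:=\rho|_{T^{(p)}A}$ has finite image; granting this, $K_\rho$ is a smooth $T^{(p)}A$-module, and since $|\im\rho^{(p)}|$ is prime to $p$ and hence invertible in the $\Q_p$-vector space $K$, one gets $H^t_{\cts}(T^{(p)}A,K_\rho)=0$ for $t>0$ and $H^0_{\cts}(T^{(p)}A,K_\rho)=(K_\rho)^{T^{(p)}A}$.

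Granting this input, the argument concludes quickly. If $\rho^{(p)}\neq 1$ then $(K_\rho)^{T^{(p)}A}=0$, so the entire $E_2$-page vanishes and $H^i_{\cts}(TA,K_\rho)=0$ for all $i$. If instead $\rho^{(p)}=1$, the spectral sequence collapses to $H^i_{\cts}(TA,K_\rho)=H^i_{\cts}(T_pA,K_{\rho_p})$ with $\rho_p:=\rho|_{T_pA}$; and since $\rho\neq 1=\rho^{(p)}$ forces $\rho_p\neq 1$, \cref{rslt:H^1_cts(whZ-K)} gives $H^i_{\cts}(\Z_p^n,K_{\rho_p})=0$ in every degree. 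Either way we obtain the claimed vanishing for all $i\geq 0$, in particular $H^0_v(A,L)=(K_\rho)^{TA}=0$ as expected.

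The main obstacle is the finite-image claim for $\rho^{(p)}$, which is the one place where the arithmetic of $K$ enters. Here I would argue as follows: $T^{(p)}A$ is compact, so $\im\rho^{(p)}$ is a compact, pro-(prime-to-$p$) subgroup of $K^\times$; composing with the valuation $K^\times\to\R$ kills it, since a compact subgroup of $(\R,+)$ is trivial, so $\im\rho^{(p)}\subseteq\O_K^\times$. Reduction modulo $\m_K$ gives a continuous map to the \emph{discrete} group $k^\times$ (its kernel $1+\m_K$ is open), whence $\im\rho^{(p)}$ has finite image in $k^\times$ by compactness. Finally, the kernel of this reduction maps $T^{(p)}A$ continuously into $1+\m_K$; but $1+\m_K$ is a topological $\Z_p$-module (for $h\in 1+\m_K$ one has $h^{p^m}\to 1$, which makes $\Z_p$-exponentiation converge), so its torsion is $p$-primary and multiplication by each $\ell\neq p$ is invertible. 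Hence every continuous homomorphism from the pro-(prime-to-$p$) group $T^{(p)}A$ into $1+\m_K$ is trivial, so $\rho^{(p)}$ is injective on this kernel in the sense that it embeds $\im\rho^{(p)}$ into $k^\times$, and therefore $\im\rho^{(p)}$ is finite. Once finiteness is in hand, the remaining points (smoothness of $K_\rho$ and the degree-zero concentration of prime-to-$p$ finite-group cohomology with $K$-coefficients) are routine.
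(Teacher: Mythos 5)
Your argument is correct in substance and ultimately uses the same three ingredients as the paper: finiteness of the image of the prime-to-$p$ part of the character, vanishing of finite-group cohomology with coefficients in the uniquely divisible group $K$, and the Koszul computation of \cref{rslt:H^1_cts(whZ-K)} for the $p$-part. The difference is \emph{where} the prime-to-$p$/$p$ splitting happens. The paper splits geometrically, descending in two stages $\wt A \to \wt A^{p} \to A$: since the character is finite on the prime-to-$p$ Tate module, the bundle $L|_{\wt A^{p}}$ is trivialized by the \emph{finite} cover $[N]\colon \wt A^{p} \to \wt A^{p}$, so Cartan--Leray together with \cref{rslt:p-adic-cover-O-acyclic} gives $H^i_v(\wt A^{p},L) = H^i(A[N],K_\rho)$, honest finite-group cohomology; then Cartan--Leray for the $T_pA$-torsor $\wt A^{p}\to A$ finishes via \cref{rslt:H^1_cts(whZ-K)}. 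You instead descend along $\wt A \to A$ in one step and split afterwards by Hochschild--Serre inside continuous group cohomology. A genuine merit of your write-up is that you prove the finite-image claim for $\rho^{(p)}$ in detail, which the paper only asserts ("the action of $T^pA$ factors through a finite quotient $A[N]$"); your proof of that claim is correct.

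The one real gap is the step you declare routine: the assertion that, because $\rho^{(p)}$ factors through a finite quotient whose order is invertible in $K$, one gets $H^t_{\cts}(T^{(p)}A,K_\rho)=0$ for $t>0$. For continuous cochain cohomology of a profinite group with \emph{non-discrete} coefficients, smoothness of the action plus invertibility of the order is not a sufficient reason: the group $\Z_p$ acts trivially on $K$ (so the action factors through the trivial group), yet $H^1_{\cts}(\Z_p,K)=\Hom_{\cts}(\Z_p,K)=K\neq 0$. What your Hochschild--Serre argument actually needs is that the open kernel $H:=\ker\rho^{(p)}$, which acts trivially, satisfies $H^t_{\cts}(H,K)=0$ for $t>0$; this is a topological statement pitting the pro-prime-to-$p$ group $H$ against the ``$p$-adic'' module $K$ — the same phenomenon your finite-image argument exploits on the multiplicative side. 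It is true and fixable: continuous cochains on the compact sets $H^t$ have bounded image, so $H^t_{\cts}(H,K)=H^t_{\cts}(H,\O_K)[\tfrac1p]$; writing $\O_K=\varprojlim_m \O_K/p^m$ and using that $\O_K/p^m$ is discrete $p$-power torsion while $H$ has pro-order prime to $p$, each $H^t(H,\O_K/p^m)$ vanishes for $t>0$, and the relevant $\varprojlim$ and $\varprojlim^1$ terms vanish, giving the claim. Alternatively, you can sidestep the issue entirely by reorganizing as the paper does: trivialize $L|_{\wt A^{p}}$ on the finite cover $[N]\colon \wt A^{p}\to\wt A^{p}$, so that only finite-group cohomology (where your invertibility argument applies verbatim) and $H^\ast_{\cts}(T_pA,-)$ ever appear.
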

	\begin{proof}
		The $v$-line bundle $L$ corresponds to a non-trivial character $\pi_1(A,0)\to K^\times$ by \cref{rslt:profet-vb-vs-Reps}. We first consider descent along the prime-to-$p$-torsor $\wt A\to \wt A^{p}$.
		Since the action of $T^pA$ factors through a finite quotient $A[N]$ for some $N$ coprime to $p$, the bundle $L$ already becomes trivial on the finite cover $[N]:\wt A^{p}\to \wt A^{p}$. Using that $\O(\wt A^p)=H^0_{\cts}(T^pA,\O(\wt A))=K$ by \cref{rslt:properties-of-wt-X}, it follows by the Cartan--Leray spectral sequence (see \cite[Corollary~2.9]{heuer-v_lb_rigid}) that for all $i\geq 0$,
		\[H^i_v(\wt A^p,L) = H^i_{\cts}(A[N], K)\]
		which vanishes if $i\geq 1$ since $K$ is uniquely divisible \cite[Proposition (1.6.2)]{NeuSchWin}. For ${i=0}$, it is clear that $ H^0_{\cts}(A[N], K)=0$ unless $A[N]$ acts trivially, in which case $H^0_v(\wt A^p,L)=K$.
		
		We now consider the Cartan--Leray sequence for the $T_pA$-torsor $\wt A^p\to A$ (also compare to \cite[Propositions 3.5 and 3.7.(iii)]{Scholze_p-adicHodgeForRigid}). The above implies that this is given by
		\begin{align*}
			H^i_v(A,L) = H^i_{\cts}(T_pA, L(\wt A^{p})).
		\end{align*}
		If $L(\wt A^{p})=0$, this vanishes. If $L(\wt A^{p})=K$, the group is computed by
		\cref{rslt:H^1_cts(whZ-K)}.
	\end{proof}
	\begin{Remark}
		This argument shows that \cref{rslt:H^1_cts(whZ-K)} also holds for $\Z_p$ replaced by $\wh \Z$. On the group cohomological side, this can be seen by replacing the Cartan--Leray sequence by Hochschild--Serre.
		
		Alternatively, one could prove \cref{rslt:H^1_v(A-L)} like in the classical case; for this one first needs to establish a K\"unneth-formula for $v$-vector bundles.
		
	\end{Remark}
	\begin{Corollary} \label{rslt:Ext^1(L_1-L_2)}
		Let $L_1$, $L_2$ be two pro-finite-\'etale $v$-line bundles on an abeloid variety $A$. Then
		\begin{align*}
			\Ext^1_v(L_1, L_2) = \begin{cases}H^1_v(A, \O) \quad& \text{if $L_1=L_2$},\\ 0 & \text{if $L_1 \neq L_2$}.\end{cases}
		\end{align*}
	\end{Corollary}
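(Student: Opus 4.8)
The plan is to reduce $\Ext^1_v(L_1,L_2)$ to a single sheaf-cohomology group and then quote \cref{rslt:H^1_v(A-L)}. The starting observation is that tensoring with the invertible sheaf $L_1^\vee$ is an exact self-equivalence of the category of $v$-vector bundles on $A$, with quasi-inverse $-\otimes L_1$. Since an exact equivalence induces isomorphisms on all Yoneda Ext-groups, I would first rewrite
\[
\Ext^1_v(L_1,L_2)\cong \Ext^1_v(L_1^\vee\otimes L_1,\,L_1^\vee\otimes L_2)\cong\Ext^1_v(\O,\,L_1^\vee\otimes L_2).
\]
Here one may freely pass between Ext in the exact category of $v$-vector bundles and Ext in all $\O$-modules, since an extension of $v$-vector bundles is again a $v$-vector bundle.

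Next I would identify $\Ext^i_v(\O,M)$ with $H^i_v(A,M)$ for an arbitrary $v$-vector bundle $M$. This follows from the local-to-global Ext spectral sequence together with the vanishing $\EXT^i(\O,M)=0$ for $i>0$ — which holds because $\O$ is locally free — and the identification $\HOM(\O,M)=M$. Applying this with $M:=L_1^\vee\otimes L_2$ yields
\[
\Ext^1_v(L_1,L_2)\cong H^1_v(A,\,L_1^\vee\otimes L_2).
\]

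It then remains to analyse the line bundle $M=L_1^\vee\otimes L_2$. Under the equivalence of \cref{rslt:profet-vb-vs-Reps}, pro-finite-\'etale $v$-line bundles correspond to characters $\pi_1(A,0)\to K^\times$ in a way compatible with tensor products and duals; hence $M$ is again pro-finite-\'etale, and $M\cong\O$ precisely when $L_1\cong L_2$. In the case $L_1=L_2$ this gives $\Ext^1_v(L_1,L_2)\cong H^1_v(A,\O)$ immediately, while in the case $L_1\neq L_2$ one has $M\neq\O$, so \cref{rslt:H^1_v(A-L)} forces $H^1_v(A,M)=0$ and hence $\Ext^1_v(L_1,L_2)=0$.

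The argument is essentially formal, so I do not expect a serious obstacle. The one point requiring care is the identification $\Ext^i_v(\O,-)=H^i_v(A,-)$, i.e.\ the degeneration of the local-to-global spectral sequence coming from the local freeness of $\O$; everything else reduces to the already-established vanishing statement \cref{rslt:H^1_v(A-L)} and the elementary closure of pro-finite-\'etale line bundles under $\otimes$ and $(-)^\vee$.
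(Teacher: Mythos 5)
Your proposal is correct and follows essentially the same route as the paper's own proof: tensoring with $L_1^{-1}$ to reduce to $\Ext^1_v(\O, L_2 \tensor L_1^{-1}) = H^1_v(A, L_2 \tensor L_1^{-1})$ and then applying \cref{rslt:H^1_v(A-L)}. The only difference is that you spell out the details the paper leaves implicit (the identification $\Ext^i_v(\O,-) = H^i_v(A,-)$ and the closure of pro-finite-\'etale line bundles under $\tensor$ and $(-)^\vee$), which is fine.
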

	\begin{proof}
		We have 
		\begin{align*}
			\Ext^1_v(L_1,L_2)=\Ext^1_v(\O, L_2 \tensor L_1^{-1}) = H^1_v(A, L_2 \tensor  L_1^{-1}).
		\end{align*}
		The statement now follows from \cref{rslt:H^1_v(A-L)}.
	\end{proof}
	
	\begin{proof}[Proof of \cref{rslt:vb-on-A-are-unipotent-times-lb}]
		By \cref{rslt:properties-of-wt-X} there are no non-trivial extensions of $\O$ by itself on $\wt A$, hence every unipotent $v$-vector bundle on $A$ is pro-finite-\'etale. So in both (i) and (ii) we only need to show the ``only if'' part.
		
		Assume that $E$ is pro-finite-\'etale. Then by Theorem~\ref{rslt:profet-vb-vs-Reps}, $E$ corresponds to a representation $\pi_1(A,0)\to \GL(W)$ where $W:=E(\wt X)$. Since $\pi_1(A,0)$ is abelian and topologically finitely generated, there is a simultaneous eigenvector, i.e.\ the representation is upper triangular. By exactness of the equivalence of the theorem, it follows inductively that $E$ is a successive extension of line bundles. By \cref{rslt:Ext^1(L_1-L_2)}, any such extension decomposes into a direct sum of successive extensions $E_i$ of some line bundle $L_i$ by itself. Then $U_i=V_i\tensor L_i^{-1}$ is unipotent, proving (i).
		
		To prove (ii), let us now assume that $E$ is analytic and pro-finite-\'etale. By (i) we can find a decomposition $E = \bigoplus_{i=1}^n (U_i \tensor L_i)$ of $E$ as a $v$-bundle. Then each $W:=U_i \tensor L_i\hookrightarrow E$ is itself an analytic vector bundle since it is a direct factor in $E$: Indeed, we can write it as the kernel of the morphism $E\to E/W\to E$. By fully faithfulness, this is a morphism of analytic vector bundles, and thus its kernel is an analytic vector bundle by Lemma~\ref{rslt:v-loc-free-implies-analytic-loc-free}.
		
		By \cref{rslt:analyticity-of-sub-LB-of-VB}, this implies that $L_i$ is analytic. Consequently,  $U_i=(U_i\tensor L_i)\otimes L_i^{-1}$ is analytic. It is then also analytically unipotent by \cref{rslt:unipotent-plus-analytic-implies-analytically-unipotent}.
	\end{proof}
	
	The following lemma shows that we can in practice restrict attention to unipotent vector bundles that vanish on the  pro-finite-\'etale pro-$p$-cover $\wt A^p$:
	
	\begin{Lemma}\label{rslt:unipotent-trivial-on-wtA}
		Let $E$ be a unipotent $v$-vector bundle on an abeloid variety $A$ over $K$. Then $E$ becomes trivial on $\wt A^p$. In particular, the corresponding representation from Theorem \ref{rslt:profet-vb-vs-Reps} factors through 
		\[T_pA\to \GL_n(K).\]
	\end{Lemma}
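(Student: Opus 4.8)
The plan is to prove the trivialization statement by induction on the rank $n$ of $E$, the only real input being the acyclicity of $\O$ on the perfectoid cover $\wt A^p$ recorded in \cref{rslt:p-adic-cover-O-acyclic}; the factorization of the representation will then follow formally. First I would extract from \cref{rslt:p-adic-cover-O-acyclic} the vanishing I actually need: since $H^i_v(\wt A^p,\O^+)\aeq 0$ for $i\ge 1$, inverting $p$ gives $H^i_v(\wt A^p,\O)=0$ for $i\ge 1$, and in particular $H^1_v(\wt A^p,\O)=0$. This is the key fact that makes all unipotent extensions split after pullback to $\wt A^p$.

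For the induction, recall that unipotence gives a short exact sequence of $v$-vector bundles on $A$
\[ 0\to \O\to E\to E'\to 0 \]
with $E'$ unipotent of rank $n-1$. Pulling back to $\wt A^p$ is exact on vector bundles, so together with the inductive hypothesis $E'|_{\wt A^p}\cong \O^{n-1}$ I obtain an extension
\[ 0\to \O\to E|_{\wt A^p}\to \O^{n-1}\to 0 \]
on $\wt A^p$, whose class lies in $\Ext^1_v(\O^{n-1},\O)=H^1_v(\wt A^p,\O)^{\oplus(n-1)}=0$ by the first step. Hence it splits and $E|_{\wt A^p}\cong\O^n$, completing the induction and proving that $E$ becomes trivial on $\wt A^p$.

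For the ``in particular'' part, I would use that $\wt A^p\to A$ is a torsor under $T_pA$ while $\wt A\to \wt A^p$ is a torsor under the prime-to-$p$ Tate module $T^pA$, with $TA=\pi_1(A,0)$ surjecting onto $T_pA=TA/T^pA$. Since $E$ is trivial on $\wt A^p$ and $\O(\wt A)=\O(\wt A^p)=K$ (by \cref{rslt:properties-of-wt-X} and the identification used in the proof of \cref{rslt:H^1_v(A-L)}), the representation space $W=E(\wt A)=E(\wt A^p)=K^n$ is the space of constant sections of a bundle pulled back from $\wt A^p$, on which the deck group $T^pA$ acts trivially. Thus $\rho|_{T^pA}$ is trivial and $\rho$ factors through $T_pA\to\GL_n(K)$; equivalently, one may run the same Cartan--Leray descent as in \cref{rslt:profet-vb-vs-Reps} directly for the $T_pA$-torsor $\wt A^p\to A$. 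I do not expect a substantial obstacle here, as the decisive vanishing $H^1_v(\wt A^p,\O)=0$ is already available; the only point requiring care is the bookkeeping of the prime-to-$p$ versus $p$-adic Tate modules in the factorization step.
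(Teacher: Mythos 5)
Your proof is correct, but it runs in the opposite logical direction from the paper's. The paper argues on the representation side: since unipotent $v$-bundles are pro-finite-\'etale (there are no nontrivial self-extensions of $\O$ on $\wt A$, by \cref{rslt:properties-of-wt-X}), $E$ corresponds under \cref{rslt:profet-vb-vs-Reps} to a continuous representation $TA \to \GL_n(K)$ landing, up to conjugation, in the upper triangular unipotent subgroup $U \subseteq \GL_n(K)$; compact subgroups of $U$ are pro-$p$, so any continuous homomorphism $TA \to U$ kills the prime-to-$p$ part and factors through $T_pA$, and the trivialization over $\wt A^p$ is then deduced by descent (comparing Cartan--Leray for $\wt A$ and $\wt A^p$). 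You instead prove the geometric statement first: the trivialization over $\wt A^p$ comes from a rank induction, splitting each unipotent extension via $\Ext^1_v(\O^{n-1},\O) = H^1_v(\wt A^p,\O)^{\oplus(n-1)} = 0$, which follows from \cref{rslt:p-adic-cover-O-acyclic}, and the factorization through $T_pA$ is then a formal consequence because the deck group $T^pA$ of $\wt A \to \wt A^p$ acts trivially on sections pulled back from $\wt A^p$ (your bookkeeping here matches the paper's use of the prime-to-$p$ torsor in the proof of \cref{rslt:H^1_v(A-L)}). Both routes are sound. The paper's is shorter granted the equivalence \cref{rslt:profet-vb-vs-Reps}, and its key input is elementary group theory (no nontrivial continuous maps from pro-$\ell$ groups, $\ell \neq p$, into unipotent groups over a $p$-adic field); yours trades that for the deeper geometric input that $\wt A^p$ is perfectoid with $H^1_v(\wt A^p,\O) = 0$ --- but this is precisely \cref{rslt:p-adic-cover-O-acyclic}, which the paper already states and exploits the same way in \cref{rslt:H1-as-Homs}, so it is freely available. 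In exchange, your argument proves the trivialization statement without invoking the representation equivalence at all, and it would apply verbatim to any pro-finite-\'etale cover with vanishing $H^1_v(-,\O)$, whereas the paper's pro-$p$ argument is specific to the structure of $TA$ and of unipotent matrix groups.
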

	\begin{proof}
		A $v$-vector bundle is unipotent if and only if it is pro-finite-\'etale and the associated representation $\pi_1(A, 0) = TA \to \GL_n(K)$ factors over a conjugate of the subgroup $U\subseteq \GL_n(K)$ of upper triangular unipotent matrices. Since $U$ is a pro-$p$-group, any morphism $TA\to U$ factors through $T_pA\to U$. The claim now follows by looking at the associated descent data (equivalently, by comparing the Cartan--Leray sequences for $\wt A$ and $\wt A^p$).
	\end{proof}
	
	As a  corollary we derive the following decomposition of the category of all pro-finite-\'etale vector bundles:
	
	\begin{Corollary} \label{rslt:decomposition-of-profet-vb-category}
		Let $A$ be an abeloid variety over $K$ and let $\Pic^{\pf}_v(A)$ denote the set of isomorphism classes of pro-finite-\'etale $v$-line bundles on $A$. Then there is a natural equivalence of categories
		\begin{align*}
			\{ \text{pro-finite-\'etale $v$-bundles on $A$} \} \ &\cong \ \bigoplus_{[L] \in \Pic^\pf_v(A)} \{ \text{unipotent $v$-bundles on $A$} \}\\
			\bigoplus_{L} L\tensor U_L \ & \mapsfrom \ (U_{L})_{L}
		\end{align*}
		Here for each class $[L]$ in $\Pic_v^\pf(A)$, we have a canonical and natural representative $L$ defined as follows: Write $\Pic_v^\pf(A)=\Hom_{\cts}(TA,K^\times)$ and associate to each character $\rho$ the line bundle $V_{\rho}$ defined in \cref{rslt:profet-vb-vs-Reps}. 
	\end{Corollary}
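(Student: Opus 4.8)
The plan is to exhibit the functor in the statement as an equivalence directly, by checking full faithfulness and essential surjectivity. Concretely, I would study
\[
\Phi\colon \bigoplus_{[L]\in\Pic_v^\pf(A)}\{\text{unipotent $v$-bundles on $A$}\}\longrightarrow\{\text{pro-finite-\'etale $v$-bundles on $A$}\},\quad (U_L)_L\mapsto\bigoplus_L L\tensor U_L,
\]
using for each class the canonical representative $L=V_\rho$ from \cref{rslt:profet-vb-vs-Reps}. First I would note this is well defined: a unipotent bundle is pro-finite-\'etale, tensoring by a pro-finite-\'etale line bundle preserves pro-finite-\'etaleness, and an object of the source has only finitely many non-zero components (so the direct sum is finite). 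By the definition of a direct sum of additive categories, the $\Hom$-group in the source between $(U_L)_L$ and $(U'_L)_L$ is $\bigoplus_{[L]}\Hom_v(U_L,U'_L)$, so it remains to match this with the $\Hom$-groups in the target.

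Essential surjectivity is immediate from \cref{rslt:vb-on-A-are-unipotent-times-lb}: any pro-finite-\'etale bundle $E$ decomposes as $\bigoplus_i U_i\tensor L_i$; replacing each $L_i$ by the canonical representative of its isomorphism class and collecting the summands whose line bundle lies in a fixed class $[L]$ exhibits $E\cong\Phi\bigl((U_L)_L\bigr)$ with $U_L=\bigoplus_{i\colon L_i\cong L}U_i$, which is again unipotent.

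For full faithfulness, since both source and target $\Hom$-groups split as finite direct sums over pairs of classes, it suffices to compute $\Hom_v(L\tensor U,\,L'\tensor U')$ for unipotent $U,U'$ and pro-finite-\'etale line bundles $L,L'$. Writing morphisms as global sections of the internal $\Hom$ gives
\[
\Hom_v(L\tensor U,\,L'\tensor U')=H^0_v\bigl(A,\,(L'\tensor L^{-1})\tensor(U^\vee\tensor U')\bigr).
\]
When $[L]=[L']$ the line bundle factor is trivial, so the right-hand side is $H^0_v(A,U^\vee\tensor U')=\Hom_v(U,U')$, matching the source; equivalently $-\tensor L$ is an auto-equivalence of $v$-bundles with quasi-inverse $-\tensor L^{-1}$. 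Thus everything reduces to the off-diagonal vanishing $\Hom_v(L\tensor U,L'\tensor U')=0$ whenever $L\not\cong L'$. Setting $N:=L'\tensor L^{-1}\neq\O$ and $M:=U^\vee\tensor U'$, I would first observe that $M$ is unipotent (the dual of a successive extension of $\O$ is again one, obtained by reversing the filtration, and the tensor of two unipotent bundles is unipotent by filtering one factor), and then prove $H^0_v(A,N\tensor M)=0$ by induction on the length of a unipotent filtration $0\to M'\to M\to\O\to 0$: tensoring with the line bundle $N$ is exact and yields
\[
0\to H^0_v(A,N\tensor M')\to H^0_v(A,N\tensor M)\to H^0_v(A,N),
\]
where the outer terms vanish by the inductive hypothesis and by \cref{rslt:H^1_v(A-L)} respectively (the base case $M=\O$ being exactly \cref{rslt:H^1_v(A-L)}).

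The only non-formal input is this off-diagonal vanishing, which rests entirely on \cref{rslt:H^1_v(A-L)}, the vanishing of all $v$-cohomology of a non-trivial pro-finite-\'etale line bundle; the remainder is bookkeeping of finite direct-sum decompositions. Hence I expect the main (but still modest) obstacle to be organising the $\Hom$-computation cleanly, and I would finally note that naturality of the equivalence in $A$ is inherited from the naturality of the constructions in \cref{rslt:profet-vb-vs-Reps,rslt:vb-on-A-are-unipotent-times-lb}.
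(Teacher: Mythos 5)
Your proposal is correct and follows essentially the same route as the paper: essential surjectivity is deduced from \cref{rslt:vb-on-A-are-unipotent-times-lb}, and full faithfulness is reduced to the off-diagonal vanishing $\Hom(L\tensor U, L'\tensor U')=0$ for $L\not\cong L'$, proved by induction on a unipotent filtration with \cref{rslt:H^1_v(A-L)} as the only non-formal input. The sole (cosmetic) difference is that you package the computation as $H^0_v(A, N\tensor M)$ with $M=U^\vee\tensor U'$ via internal Hom, whereas the paper factors a given morphism $U_1\to L_2\tensor U_2$ step by step through the filtration of $U_1$; the content is identical.
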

	\begin{proof}
		By \cref{rslt:vb-on-A-are-unipotent-times-lb} the functor from right to left is essentially surjective, so we only need to check full faithfulness. This boils down to showing the following: Given two unipotent $v$-bundles $U_1$, $U_2$ and two pro-finite-\'etale $v$-line  bundles $L_1 \ne L_2$ we have
		\begin{align*}
			\Hom(L_1 \tensor U_1, L_2 \tensor U_2) = 0.
		\end{align*}
		Tensoring with $L_1^{-1}$ reduces to the case that $L_1 = \O$ is trivial. Note that $L_2 \tensor U_2$ is a successive extension of copies of $L_2$, so inductively \cref{rslt:H^1_v(A-L)} shows $H^i_v(A, L_2 \tensor U_2) = 0$. Now suppose we are given a morphism $f\colon U_1 \to L_2 \tensor U_2$ and write $U_1$ as an extension $\O\to U_1\to U_1'$ where $U_1'$ is unipotent of smaller rank. Then the composition \[\O \injto U_1 \xto{f} L_2 \tensor U_2\]
		can be seen as an element of $H^0(A, L_2 \tensor U_2)$ and hence vanishes. It follows that $f$ factors through $U_1'$. Inductively, we obtain $f = 0$, as desired.
	\end{proof}

	\section{The diamantine universal vector extension}
	As before, let $K$ be a complete algebraically closed field extension of $\Q_p$. We have seen in \cref{rslt:profet-vb-vs-Reps} that pro-finite-\'etale $v$-vector bundles on an abeloid variety $A$ over $K$ are equivalent to finite dimensional continuous representations of the fundamental group $TA = \pi_1(A, 0)$ over $K$. This raises the natural question:
	\begin{question}\label{q:when-is-rep-an}
		Given a continuous representation $TA\to \GL_n(K)$, how can we tell whether the associated pro-finite-\'etale $v$-vector bundle is analytic?
	\end{question}

	In this section, we will answer the  question above  with the help of universal vector extensions. Classically, Rosenlicht \cite{rosenlicht-universal} has shown that every abelian variety $A$ has a universal extension among all extensions of $A$ by vector groups. Here the vector group in the universal vector extension is the tangent space of the dual abelian variety. In this section we will generalise these results from the algebraic to the analytic setting, i.e.\ to the case of abeloid varieties. In a second generalisation direction, we prove the existence of universal extensions also for the $v$-topological analog of vector extensions (see \cref{d:vectorext}). 
	We then answer Question~\ref{q:when-is-rep-an} by generalising results of Brion  (\cite{Brion_homogVB_AV}, see also \cite[Theorem 6.4.1, Proposition 6.4.4]{BPU_Chennai}) relating universal vector extensions to unipotent representations.
	
	\subsection{Universal vector extensions of abeloids}
	
	In the following we will work with the ``big analytic site'' $\LSD_{K,\an}$ (where covers are the open covers) and the ``big $v$-site'' $\LSD_{K,v}$ of locally spatial diamonds over $K$ (see \cref{sec:prelim-diamonds}).
	
	\begin{Definition}\label{d:vectorext}
		\begin{defenum}
			\item A \emph{vector group over $K$} is a (rigid) group variety $V$ over $K$ which is isomorphic to a finite product of copies of $\mathbb G_a$. To any finite-dimensional $K$-vector space $W$ we can associate the vector group $W \tensor \mathbb G_a$. As usual, via diamondification we consider $V$ as an object of $\LSD_{K,v}$.
			
			\item 
			Let $A$ be an abeloid variety over $K$ and let $\tau \in \{ \an, v \}$. A \emph{$\tau$-vector extension of $A$} is an exact sequence  of sheaves of abelian groups on $\LSD_{K,\tau}$
			\begin{align*}
				0 \to V \to E \to A \to 0
			\end{align*}
			where $V$ is a vector group over $K$.
		\end{defenum}
	\end{Definition}
	
	\begin{Remark}
		It is clear that the isomorphism classes of $\tau$-vector extensions of an abeloid variety $A$ by a given vector group $V$ are precisely the elements of $\Ext^1_\tau(A, V)$. Note that for $\tau = \an$, the groups $\Ext^n_\an(A, V)$ can equivalently be computed either on the site $\LSD_{K,\an}$ or on $A_\an$ (or on the big analytic site of rigid $K$-varieties). Similarly, $H^n_\an(A, V)$ can equivalently be computed in $\LSD_{K,\an}$ or $A_\an$. Thus the choice of working with $\LSD_{K,\an}$ instead of $A_\an$ is really a matter of taste -- the reason we choose to work with $\LSD_{K,\an}$ is that it allows for a uniform treatment of the analytic and $v$-topology.
	\end{Remark}
	
	By the following lemma, $\tau$-vector extensions of $A$ correspond to $\tau$-$V$-torsors over $A$.
	
	\begin{Lemma} \label{rslt:Ext-A-Ga-is-H1-of-O}
		Let $A$ be an abeloid variety over $K$ and let $\tau \in \{ \an, v \}$. Let $V$ be a vector group. Then there is a natural isomorphism of abelian groups
		\begin{align*}
			\Ext^1_\tau(A, V) = H^1_\tau(A, V).
		\end{align*}
		In particular, every analytic vector extension of $A$ is also a $v$-vector extension and moreover is representable by a rigid group variety over $K$.
	\end{Lemma}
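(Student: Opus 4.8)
The plan is to reduce to the case $V = \G_a$ and then identify both groups with the \emph{primitive} classes in $H^1_\tau(A,\O)$. Since $V \cong \G_a^{\dim_K W}$ and both $\Ext^1_\tau(A,-)$ and $H^1_\tau(A,-)$ are additive in the coefficients, it suffices to treat $V = \G_a$, for which $\G_a = \O$ as a sheaf of abelian groups. The natural map goes $\Ext^1_\tau(A,\G_a) \to H^1_\tau(A,\O)$: an extension $0 \to \G_a \to E \to A \to 0$ of abelian sheaves on $\LSD_{K,\tau}$ is in particular a $\G_a$-torsor $E \to A$, and such torsors are classified by $H^1_\tau(A,\O)$. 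The group law on $E$ moreover furnishes an isomorphism of torsors $p_1^\ast E \tensor p_2^\ast E \isomarrow m^\ast E$ over $A \times A$ (here $\tensor$ is the contracted product, i.e.\ the sum in $H^1$), so the image lands in the subgroup of primitive classes, i.e.\ those $\alpha$ with $m^\ast\alpha = p_1^\ast\alpha + p_2^\ast\alpha$ in $H^1_\tau(A\times A,\O)$.

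For injectivity I would argue that an extension whose underlying torsor is trivial already splits. After a translation one picks a section $s \colon A \to E$ of the underlying torsor with $s(0)=0$; its failure to be a homomorphism is a morphism $c\colon A \times A \to \G_a$, $c(a,b)=s(a)+s(b)-s(a+b)$, valued in $\G_a$ because $E \to A$ is a homomorphism. Viewing $c$ as a global section of $\O$ on the proper connected space $A\times A$, it is constant, and $c(0,0)=0$ forces $c=0$, so $s$ is a homomorphism. This uses only $H^0_\tau(A\times A,\O)=K$, which holds for both $\tau$ since $\nu_\ast\O_v=\O_\an$ and $A\times A$ is proper and connected.

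Surjectivity is the crux, and I would establish it in two steps. First, every class of $H^1_\tau(A,\O)$ is primitive: by the degree-one Künneth formula $H^1_\tau(A\times A,\O)=p_1^\ast H^1_\tau(A,\O)\oplus p_2^\ast H^1_\tau(A,\O)$ one writes $m^\ast\alpha=p_1^\ast\beta+p_2^\ast\gamma$, and restricting along the two axis embeddings $A\injto A\times A$ (along which $m$ becomes the identity and one projection becomes a constant map) forces $\beta=\gamma=\alpha$. Second, given primitivity, the isomorphism $\mu\colon p_1^\ast E\tensor p_2^\ast E\isomarrow m^\ast E$ of $\G_a$-torsors over $A\times A$ provides a candidate addition on $E$; after normalising $\mu$ on the axes by rigidity, the associativity and commutativity of this law become identities of torsor isomorphisms over $A^{\times 3}$ and $A\times A$, hence differ by automorphisms, i.e.\ by elements of $\O(A^{\times 3})=K$ and $\O(A\times A)=K$, which are pinned down to be trivial by evaluation at the base point. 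This promotes $E$ to a vector extension representing $\alpha$.

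I expect the main obstacle to be this surjectivity, for two reasons. The primitivity input requires a Künneth formula for $H^\ast_v(A,\O)$, which is not available off the shelf and which I would deduce from the Hodge--Tate decomposition of \eqref{HTseq} together with Künneth for the Hodge pieces $H^i(A,\Omega^j)$ (equivalently from the primitive comparison theorem and Künneth for \'etale $\Z_p$-cohomology). The construction of the group law is the rigid-analytic and $v$-topological analog of the theorem of the cube, and the rigidity bookkeeping must be carried out uniformly in $\tau\in\{\an,v\}$. The two ``in particular'' statements are then formal: the isomorphism is natural in $\tau$ and compatible with the left-exact sequence \eqref{HTseq}, whose first map $H^1_\an(A,\O)\injto H^1_v(A,\O)$ is injective, so $\Ext^1_\an(A,V)\injto\Ext^1_v(A,V)$ and every analytic vector extension is a $v$-vector extension; and an analytic $V$-torsor over the rigid space $A$ is $\an$-locally $A\times V$ and hence representable by a rigid space, on which the group law makes it a rigid group variety over $K$.
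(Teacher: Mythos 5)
Your proposal is correct in outline, but it takes a genuinely different route from the paper. The paper never constructs the correspondence torsor-theoretically: it invokes the Breen--Deligne resolution (\cref{rslt:BD-in-deg-1}) to identify $\Ext^1_\tau(A,V)$ formally with $\ker\bigl(H^1_\tau(A,V)\xrightarrow{\delta}H^1_\tau(A\times A,V)\bigr)$, where $\delta=m^*-\pi_1^*-\pi_2^*$, i.e.\ with exactly your subgroup of primitive classes. This single homological step replaces your three geometric ones (the map from extensions to primitive torsor classes, injectivity via rigidity, and surjectivity via the theorem-of-the-cube construction of the group law); note that the hypothesis of \cref{rslt:BD-in-deg-1} --- every morphism $A^n\to V$ of sheaves is constant --- is precisely the rigidity input $\O(A^{\times n})=K$ that you use repeatedly. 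What then remains in the paper is to show $\delta=0$, which is done via Cartan--Leray: $H^1_v(A,\O)=\Hom_{\cts}(T_pA,K)$ and $H^1_v(A^2,\O)=\Hom_{\cts}(T_pA\times T_pA,K)$, under which $\delta(f)(x,y)=f(x+y)-f(x)-f(y)$ vanishes because $f$ is a homomorphism. Your primitivity step (K\"unneth plus axis restriction) requires a K\"unneth formula for $H^1_v$, and your proposed derivation of it --- primitive comparison plus degree-one K\"unneth for \'etale cohomology, i.e.\ $\pi_1(A\times A,0)=TA\times TA$ --- is, once unwound, the same computation as the paper's Cartan--Leray step, so the arithmetic input coincides. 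One place where the paper is strictly more economical: for $\tau=\an$ it simply uses the injection $H^1_\an(A^r,\O)\injto H^1_v(A^r,\O)$ for $r=1,2$ to reduce the vanishing of $\delta$ to the $v$-case, whereas your route as written also needs a coherent K\"unneth formula for proper rigid spaces (for $H^1_\an$ and $H^0(\Omega^1)$ of $A\times A$); you could sidestep that by the same injection trick. What your approach buys is an explicit geometric construction, in the style of Rosenlicht and Serre for abelian varieties, of the group extension attached to a primitive class; what the paper's buys is brevity, uniformity in $\tau$, and the fact that all geometry is concentrated in the two inputs $\O(A^{\times n})=K$ and the group-cohomological description of $H^1_v$.
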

	This follows from the following very general statement:
	\begin{Lemma}\label{rslt:BD-in-deg-1}
		Let $\mathcal C$ be any site  and let $G$ and $H$ be abelian sheaves on $\mathcal C$ such that any morphism $G^n\to H$ for $n\in \N$ is constant. Then
		\[ \Ext^1(G, H) = \ker\big(H^1(G, H)\xrightarrow{\delta} H^1(G\times G, H)\big)\]
		where $\delta=m^{\ast}-\pi_1^{\ast}-\pi_2^{\ast}$.
	\end{Lemma}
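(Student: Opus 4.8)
The plan is to identify both sides with concrete geometric data and then to check that the hypothesis on $\Hom(G^n,H)$ makes all the relevant obstructions vanish. Throughout I interpret $H^1(G,H)$ as the group of $H$-torsors on the localized site $\mathcal C/G$ (equivalently $\Ext^1_{\mathrm{Ab}(\mathcal C)}(\Z[G],H)$), with group law the Baer sum, and I write $m,\pi_1,\pi_2\colon G\times G\to G$ for the multiplication and the two projections, so that $\delta=m^\ast-\pi_1^\ast-\pi_2^\ast$.

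First I would construct the comparison map $\Phi\colon\Ext^1(G,H)\to H^1(G,H)$ by forgetting the group structure: an extension $0\to H\to E\xrightarrow{p}G\to 0$ presents $E$, via $p$, as an $H$-torsor over $G$, since it is locally (for the topology) isomorphic to $G\times H$. I claim $\Phi$ lands in $\ker\delta$. Indeed, the multiplication $\mu\colon E\times E\to E$ of the group $E$ lies over $m$ and is biadditive for the two $H$-actions, $\mu(e_1+h_1,e_2+h_2)=\mu(e_1,e_2)+h_1+h_2$; such a biadditive map over $m$ is exactly an isomorphism of $H$-torsors $\pi_1^\ast E+\pi_2^\ast E\xrightarrow{\sim}m^\ast E$ over $G\times G$, whence $\delta\Phi[E]=0$. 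To see $\Phi$ is injective — the first point where the hypothesis enters — note that $\Phi[E]=0$ means $E\cong G\times H$ as a torsor, so the group law is encoded by a normalized symmetric factor system $f\colon G\times G\to H$; by assumption $f$ is constant, and normalization forces $f=0$, so $E$ is the split extension. Hence $\ker\Phi=0$.

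For surjectivity onto $\ker\delta$ I would reverse the first step. Starting from a torsor class $\xi\in H^1(G,H)$ with $\delta\xi=0$, represented by $p\colon E\to G$, the vanishing supplies an isomorphism $\pi_1^\ast E+\pi_2^\ast E\xrightarrow{\sim}m^\ast E$, i.e.\ a biadditive map $\mu\colon E\times E\to E$ over $m$, which by the hypothesis is unique up to a constant in $H(\ast)$. Restricting $\delta\xi=0$ along the unit gives $[0^\ast E]=2[0^\ast E]$ in $H^1(\ast,H)$, so $0^\ast E$ is trivial and I may choose a unit $0_E\in E_0(\ast)$; I then normalize $\mu$ so that $\mu(0_E,0_E)=0_E$. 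Now each group axiom becomes the assertion that two biadditive maps lying over the same map of powers of $G$ coincide: their difference is a morphism $G^n\to H$, hence constant by hypothesis, and evaluating at the unit shows it is $0$. This yields the left and right unit laws (from $G\to H$), commutativity and symmetry (from $G^2\to H$) and associativity (from $G^3\to H$). Inverses follow because the shear map $E\times E\to E\times E$, $(e_1,e_2)\mapsto(e_1,\mu(e_1,e_2))$, is a morphism of $H\times H$-torsors covering the isomorphism $(g_1,g_2)\mapsto(g_1,g_1g_2)$ of $G\times G$, hence is itself an isomorphism; a commutative monoid with invertible shear is an abelian group. Thus $0\to H\to E\to G\to 0$ is an extension with $\Phi[E]=\xi$, and uniqueness of the normalized $\mu$ shows the two constructions are mutually inverse.

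The main obstacle is this surjectivity step: one must promote the mere trivialization of $\delta\xi$ to an honest abelian group law and verify every axiom. The real content is that the hypothesis $\Hom(G^n,H)=H(\ast)$ simultaneously pins down $\mu$ up to a single constant, forces each axiom by turning its obstruction into a constant that is annihilated by evaluation at the unit, and (through injectivity) guarantees the resulting group structure is unique. The only genuinely non-formal ingredient is the existence of inverses, which the shear-map argument handles cleanly.
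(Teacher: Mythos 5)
Your proof is correct, but it takes a genuinely different route from the paper's. The paper deduces the lemma in a few lines from the Breen--Deligne resolution $\dots \to \Z[G^3]\oplus\Z[G^2] \to \Z[G^2] \xrightarrow{[m]-[\pi_1]-[\pi_2]} \Z[G] \to G \to 0$: applying $\Hom(-,H)$ gives a spectral sequence $E_1^{i,j} = \prod_{k} H^j(G^{r_{i,k}}, H) \Rightarrow \Ext^{i+j}(G,H)$, the hypothesis that every morphism $G^n \to H$ is constant forces the bottom row to agree with the one computing $\Ext^\bullet(0,H)=0$, so $E_2^{i,0}=0$, and the five-term exact sequence then identifies $\Ext^1(G,H)$ with $E_2^{0,1}=\ker\delta$. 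You instead run the classical torsor/factor-system dictionary (in the style of SGA~7): the forgetful map $\Phi$ from extensions to torsors, injectivity via normalized factor systems, and surjectivity by promoting a trivialization of $\delta\xi$ to an abelian group law on the torsor, with the hypothesis turning the obstruction to each axiom into a constant that dies upon evaluation at the unit, and the shear-map trick supplying inverses. The trade-off: your argument is elementary and self-contained, uses the hypothesis only for $n\le 3$, and makes the inverse of $\Phi$ explicit, whereas the paper's argument is very short modulo the heavy black box of the Breen--Deligne resolution, needs the hypothesis for all powers $G^{r_{i,k}}$ occurring in the resolution, and is structurally suited to saying things about higher $\Ext$-groups as well. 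Two points you leave implicit and should record: the asserted equality is one of abelian groups, so you should note that $\Phi$ is additive (Baer sum of extensions corresponds to contracted product of torsors; this is automatic in the spectral-sequence proof); and in the shear argument the map is equivariant only up to the automorphism $(h_1,h_2)\mapsto (h_1,h_1+h_2)$ of $H^2$, so you are invoking the (standard) fact that a torsor map covering an isomorphism of the base and an isomorphism of the structure group is itself an isomorphism.
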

	\begin{proof}
		This follows from the Breen--Deligne resolution \cite[\S 2.1.5]{BBM}  (see also \cite[Theorem 4.5]{condensed-mathematics}): This is a functorial resolution of $G$ in $\mathcal C$ of the form
		\begin{align*}
			\dots \bigoplus_{k=1}^{n_i} \Z[G^{r_{i,k}}] \to \dots \to \Z[G^3] \oplus \Z[G^2] \to \Z[G^2] \xrightarrow{[m]-[\pi_1]-[\pi_2]} \Z[G] \to G \to 0.
		\end{align*}
		From this resolution we directly obtain a spectral sequence
		\begin{align*}
			E_1^{i,j} = \prod_{k=1}^{n_i} H^j(G^{r_{i,k}}, H) \Longrightarrow \Ext^{i+j}(G,  H).
		\end{align*}
		By assumption, the terms $E_1^{i,0}$ agree with those computing $\Ext(0,H)=0$,  and thus the terms $E_2^{i,0}$ vanish. It follows from the $5$-term exact sequence that 
		\begin{align*}
			\Ext^1(G, H) = E_2^{0,1} = \ker(H^1(G, H) \xto{\delta} H^1(G\times G, H)).
		\end{align*}
	\end{proof}
	\begin{proof}[Proof of \cref{rslt:Ext-A-Ga-is-H1-of-O}]
		We apply \cref{rslt:BD-in-deg-1} to the site $\LSD_{K,\tau}$. It then remains to show that $\delta$ vanishes. Since $H^1_\an(A^r, \O) \subset H^1_v(A^r, \O)$ for $r=1,2$, it suffices to show this vanishing in the case $\tau = v$. Then by Cartan--Leray, $H^1_v(A, \O)$ is continuous $\pi_1(A, 0)$-cohomology of the trivial representation, hence $H^1_v(A, \O) = \Hom_{\cts}(T_p A, K)$. Similarly $H^1_v(A^2, \O) = \Hom_{\cts}(T_p A \times T_p A, K)$ and $\delta$ is the map
		\begin{align*}
			\Hom_{\cts}(T_p A, K) &\to \Hom_{\cts}(T_p A \times T_p A, K),\\
			f &\mapsto \varphi(f) = [(x, y) \mapsto f(x+y) - f(x) - f(y)],
		\end{align*}
		which is evidently the zero map.
	\end{proof}
	In the $v$-topology, the description of the last lemma can be made even more explicit:
	\begin{Lemma}\label{rslt:H1-as-Homs}
		Let $V$ be any unipotent $v$-sheaf, for example a vector group. Then we have
		\[H^1_v(A,V)=\Hom_{\cts}(T_pA,V(K)).\]
	\end{Lemma}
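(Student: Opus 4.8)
The plan is to compute $H^1_v(A,V)$ via the Cartan--Leray spectral sequence for the $T_pA$-torsor $\wt A^p\to A$ already used in the proof of \cref{rslt:H^1_v(A-L)}, after first establishing that $V$ is acyclic on $\wt A^p$ with constant, $T_pA$-invariant global sections. Concretely, the key intermediate claim is that
\[
H^0(\wt A^p,V)=V(K)\quad\text{with trivial $T_pA$-action},\qquad H^i_v(\wt A^p,V)=0\ \text{for all }i\ge 1.
\]
Granting this acyclicity claim, the spectral sequence $E_2^{i,j}=H^i_{\cts}(T_pA,H^j_v(\wt A^p,V))\Rightarrow H^{i+j}_v(A,V)$ collapses onto the row $j=0$, so that $H^n_v(A,V)=H^n_{\cts}(T_pA,V(K))$; taking $n=1$ and using that the coefficients carry the trivial action gives $H^1_v(A,V)=\Hom_{\cts}(T_pA,V(K))$, since the first continuous cohomology of a finitely generated $\Zp$-module with trivial coefficients is just the continuous homomorphisms. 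For $V=\G_a$ this recovers the computation $H^1_v(A,\O)=\Hom_{\cts}(T_pA,K)$ from \cref{rslt:Ext-A-Ga-is-H1-of-O}, so everything is reduced to the acyclicity claim.

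For that claim I would argue by dévissage along a unipotent filtration of $V$ whose graded pieces are copies of $\G_a=\O$. The base case is exactly \cref{rslt:p-adic-cover-O-acyclic}: since $\wt A^p$ is perfectoid and qcqs we have $H^i_v(\wt A^p,\O^+)\aeq 0$ for $i\ge1$, and inverting $p$ (which commutes with cohomology) yields $H^i_v(\wt A^p,\O)=0$ for $i\ge1$, while $H^0(\wt A^p,\O)=\O(\wt A^p)=K$ as recorded in the proof of \cref{rslt:H^1_v(A-L)}. Given a short exact sequence $0\to V'\to V\to V''\to0$ of $v$-sheaves with $V',V''$ unipotent of shorter length, the associated long exact sequence on $\wt A^p$ propagates the vanishing $H^i_v(\wt A^p,-)=0$ ($i\ge1$) inductively, and consequently forces the $H^0$-sequence $0\to V'(\wt A^p)\to V(\wt A^p)\to V''(\wt A^p)\to 0$ to be short exact.

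It remains to identify $H^0(\wt A^p,V)$ with $V(K)$ equipped with the trivial action. Here I would compare with the base point: writing $c\colon\wt A^p\to\Spa(K)$ for the structure map, pullback gives $c^*\colon V(K)\to V(\wt A^p)$, and the same dévissage — now using $H^i_v(\Spa(K),\O)=0$ for $i\ge1$, which holds because $\Spa(K)$ is a geometric point — shows via the five lemma that $c^*$ is an isomorphism. Moreover every deck transformation $g\in T_pA$ of $\wt A^p\to A$ is a morphism over $\Spa(K)$, i.e.\ $c\circ g=c$, so that $g^*c^*=c^*$; hence the whole of $V(\wt A^p)=\mathrm{im}(c^*)$ is fixed pointwise and the $T_pA$-action is trivial. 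This yields the acyclicity claim and completes the argument.

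The routine vanishing statements (acyclicity of $\O$ on $\wt A^p$ and on $\Spa(K)$) are supplied by \cref{rslt:p-adic-cover-O-acyclic} together with standard perfectoid point computations. The one genuinely structural point, and the step I expect to require the most care, is the triviality of the $T_pA$-action on $H^0(\wt A^p,V)$: for a general unipotent $V$ this cannot be read off graded piece by graded piece, as an extension of trivial representations need not be trivial, and it is precisely the fact that all global sections on $\wt A^p$ are pulled back from the base $\Spa(K)$ via $c$ that rules out any nontrivial extension of actions.
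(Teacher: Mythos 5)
Your proof is correct and follows essentially the same route as the paper: the Cartan--Leray spectral sequence for the $T_pA$-torsor $\wt A^p \to A$, combined with d\'evissage along the unipotent filtration of $V$ to reduce the vanishing of $H^i_v(\wt A^p, V)$ to the $\O$-acyclicity supplied by \cref{rslt:p-adic-cover-O-acyclic}. The only difference is one of detail: the paper's brief argument leaves implicit the identification $H^0(\wt A^p, V) = V(K)$ with trivial $T_pA$-action, which you verify carefully via pullback along the structure map to $\Spa(K)$.
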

	\begin{proof}
		The Cartan--Leray sequence of the cover $\wt A^{p}
		\to A$ induces an exact sequence
		\[0\to H^1_{\cts}(T_pA,V(K))\to H^1_v(A,V)\to H^1_v(\wt A^{p},V).\]
		The first term are precisely the continuous homomorphisms. The last term vanishes, this follows inductively from $H^1_v(\wt A^{p},\O)=0$ by \cref{rslt:p-adic-cover-O-acyclic}.
	\end{proof}
	
	\begin{Lemma}\label{rslt:Hom(G_a)}
		For the rigid group $\G_a$, we have
		$\Hom(\G_a,\G_a)=K.$ In particular, sending
		\[ V\mapsto V\otimes_{K}\G_a\]
		defines an equivalence of categories from finite dimensional $K$-vector spaces to vector groups.
	\end{Lemma}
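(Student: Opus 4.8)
The plan is to prove the statement $\Hom(\G_a,\G_a)=K$ directly from the structure of the rigid group $\G_a$, and then deduce the claimed equivalence of categories as a formal consequence. First I would observe that since we are working with $K$-group varieties, any $K$-linear scalar $\lambda\in K$ gives a morphism $\G_a\to\G_a$ by $x\mapsto \lambda x$, and this produces an injection $K\hookrightarrow\Hom(\G_a,\G_a)$. The content is therefore the reverse inclusion: every homomorphism of rigid groups $\G_a\to\G_a$ is of this scalar form. To see this I would describe such a morphism on global functions: it is determined by a power series $f(T)=\sum_{n\ge 0}a_nT^n\in\O(\G_a)=K\langle T\rangle_{\mathrm{conv}}$ (entire functions on the affine line), and the condition that $f$ be a group homomorphism is the Cauchy functional equation $f(S+T)=f(S)+f(T)$ together with $f(0)=0$. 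Expanding both sides and comparing coefficients forces $a_n=0$ for all $n\neq 1$, leaving $f(T)=a_1T$, i.e.\ multiplication by the scalar $a_1\in K$. This identifies $\Hom(\G_a,\G_a)$ with $K$ as claimed.

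Once $\End(\G_a)=K$ is established, the equivalence follows by a standard argument. For finite-dimensional $K$-vector spaces $V,W$, I would compute
\[
\Hom(V\tensor_K\G_a,\,W\tensor_K\G_a)=\Hom_K(V,W)\tensor_K\End(\G_a)=\Hom_K(V,W),
\]
where the first equality uses that $V\tensor\G_a\cong\G_a^{\dim V}$ is a finite product and that morphisms out of a product of copies of $\G_a$ into $W\tensor\G_a\cong\G_a^{\dim W}$ are given by matrices of elements of $\End(\G_a)=K$; the second equality is the identification just proved. This shows the functor $V\mapsto V\tensor_K\G_a$ is fully faithful. It is essentially surjective onto vector groups by the very \emph{definition} of a vector group as a finite product of copies of $\G_a$: any such group is isomorphic to $K^n\tensor_K\G_a$. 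Hence the functor is an equivalence.

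The one step requiring genuine care is the core computation that a rigid-analytic group homomorphism $\G_a\to\G_a$ is linear, i.e.\ that the functional equation $f(S+T)=f(S)+f(T)$ forces all higher coefficients to vanish. The subtlety is purely about working with the correct global sections: $\O(\G_a)$ consists of \emph{entire} power series (those converging on every affinoid disc), not merely convergent ones, so I must make sure the coefficient comparison is legitimate for this ring and that no exotic additive endomorphisms (such as Frobenius-type maps) intervene — these are ruled out precisely because we are in characteristic zero and demanding $K$-rigid-analytic, not merely set-theoretic or $p$-typical, morphisms. I expect this to be the main obstacle, though it is entirely routine once the right function-theoretic setup is fixed; the remaining categorical bookkeeping is formal.
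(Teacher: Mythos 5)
Your proposal is correct and takes essentially the same route as the paper: the paper restricts a homomorphism to the formal completion at the origin, identifying $\Hom(\G_a,\G_a)$ with endomorphisms of the additive formal group $K[[X]]$, where the additive functional equation forces linearity in characteristic zero — exactly the coefficient comparison you carry out, just on entire series in $\O(\G_a)$ rather than on formal ones. The categorical consequence is treated as formal in both cases, so the only difference is that you spell out the (routine) full-faithfulness computation.
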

	\begin{proof}
		Taking global sections, $\Hom(\G_a,\G_a)$ injects into the endomorphisms of the formal group $K[[X]]$ with its additive group law. Here the statement is clear.
	\end{proof}
	
	Now we are ready to generalise the classical result that there exists a universal vector extension in the Zariski topology on abelian varieties  \cite{rosenlicht-universal} to the rigid analytic setting of abeloids. Moreover, we prove that this also holds in the $v$-topology:
	\begin{Definition}
		For any $K$-vector space $W$,  we denote by $W^\ast=\Hom_K(W,K)$ its $K$-dual.
	\end{Definition}
	\begin{Theorem} \label{rslt:universal-vector-extension-exists}
		Let $A$ be an abeloid variety over $K$ and let $\tau \in \{ \an, v \}$. Then there is a universal $\tau$-vector extension
		\begin{align*}
			0 \to H^1_\tau(A, \O)^* \tensor \mathbb G_a \to E_\tau A \to A \to 0,
		\end{align*}
		i.e.\ $E_\tau A$ is a $\tau$-vector extension  and for every vector extension $0 \to V \to E \to A \to 0$ of $A$ there is a unique map $f\colon H^1_\tau(A, \O)^* \to V$ of group varieties such that $E$ is the pushout of $E_\tau A$ along $f$.
	\end{Theorem}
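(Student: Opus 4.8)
The plan is to exhibit $E_\tau A$ as the object corepresenting the functor of vector extensions, so that the universal property reduces to a Yoneda-type identification of the identity morphism with the universal extension class. First I would recall from \cref{rslt:Ext-A-Ga-is-H1-of-O} that isomorphism classes of $\tau$-vector extensions of $A$ by a vector group $V$ are exactly the elements of $\Ext^1_\tau(A,V)=H^1_\tau(A,V)$. Writing $V=W\tensor_K\G_a$ for a finite-dimensional $K$-vector space $W$ (legitimate by \cref{rslt:Hom(G_a)}) and abbreviating $H:=H^1_\tau(A,\O)$, the base-change isomorphism $H^1_\tau(A,W\tensor\G_a)=W\tensor_K H$ yields a chain of natural identifications
\[
\Ext^1_\tau(A,W\tensor\G_a)\;=\;W\tensor_K H\;=\;\Hom_K(H^\ast,W),
\]
the last step using that $H$ is finite-dimensional. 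The key functoriality statement I would verify is that this chain is natural in $W$: a $K$-linear map $\phi\colon W\to W'$ induces $\phi\tensor\G_a$, whose associated pushout on $\Ext^1$ corresponds, under the displayed identification, to post-composition $\theta\mapsto\phi\comp\theta$ on the $\Hom$-groups.

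Granting this, I would define $E_\tau A$ to be the $\tau$-vector extension whose class in $\Ext^1_\tau(A,H^\ast\tensor\G_a)=\Hom_K(H^\ast,H^\ast)$ is the identity $\id_{H^\ast}$; by \cref{rslt:Ext-A-Ga-is-H1-of-O} this class is represented by an actual extension, and in the analytic case by one representable by a rigid group variety. To verify universality, take any vector extension $0\to W\tensor\G_a\to E\to A\to 0$ and let $\psi\in\Hom_K(H^\ast,W)$ be the element matching its class. By \cref{rslt:Hom(G_a)} a morphism of vector groups $f\colon H^\ast\tensor\G_a\to W\tensor\G_a$ is the same datum as a linear map $\phi\in\Hom_K(H^\ast,W)$, and by the naturality above the pushout $f_\ast$ carries the class $\id_{H^\ast}$ of $E_\tau A$ to $\phi\comp\id_{H^\ast}=\phi$. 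Thus $E$ is the pushout of $E_\tau A$ along $f$ precisely when $\phi=\psi$, so $f$ exists and is unique. This is exactly the claimed universal property.

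Since the universal property is otherwise formal, the part I expect to demand the most care is the pair of naturality assertions. Concretely, I must check that the isomorphism $\Ext^1_\tau(A,-)\cong H^1_\tau(A,-)$ of \cref{rslt:Ext-A-Ga-is-H1-of-O} is natural in the coefficient sheaf --- which it is, being built from the functorial Breen--Deligne five-term exact sequence used to prove that lemma --- and that $H^1_\tau(A,W\tensor\G_a)=W\tensor_K H^1_\tau(A,\O)$ is the honest base-change map, hence compatible with the induced maps from $\Hom(W\tensor\G_a,W'\tensor\G_a)$. In the $v$-topology I can sidestep any doubt by making everything explicit through \cref{rslt:H1-as-Homs}: there $\Ext^1_v(A,W\tensor\G_a)=\Hom_{\cts}(T_pA,W)$, the universal class becomes the canonical continuous homomorphism $T_pA\to H^1_v(A,\O)^\ast$ induced by the tautological evaluation pairing, and universality is then immediate from the universal property of this map.
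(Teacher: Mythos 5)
Your proposal is correct and follows essentially the same route as the paper: the paper likewise combines \cref{rslt:Ext-A-Ga-is-H1-of-O} with the identifications $\Ext^1_\tau(A,V)=H^1_\tau(A,V)=H^1_\tau(A,\O)\tensor_K V(K)=\Hom_K(H^1_\tau(A,\O)^*,V(K))$ and takes the extension corresponding to the identity map, invoking \cref{rslt:Hom(G_a)} exactly as you do. Your explicit verification of naturality in $W$ (and the alternative description via \cref{rslt:H1-as-Homs}) merely spells out what the paper compresses into ``it is immediate.''
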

	\begin{proof}
		For any vector group $V$ over $K$ we have, by \cref{rslt:Ext-A-Ga-is-H1-of-O},
		\begin{align*}
			\Ext^1_\tau(A, V) = H^1_\tau(A, V) = H^1_\tau(A, \O) \tensor_K V(K) = \Hom_K(H^1_\tau(A, \O)^*, V(K)).
		\end{align*}
		In other words, the $\tau$-vector extensions of $A$ by $V$ are in one-to-one correspondence to $K$-linear maps $f\colon H^1_\tau(A, \O)^* \to V(K)$. Thus, letting $V = H^1_\tau(A, \O)^* \tensor \mathbb G_a$ and $f = \id$, it is immediate from \cref{rslt:Hom(G_a)} that we obtain a vector extension $E_\tau A$ which is universal.
	\end{proof}
	
	\subsection{Unipotent representations from vector extensions}
	Having established the theory of vector extensions of abeloids, we can now generalise a result of Brion for abelian varieties (\cite{Brion_homogVB_AV}, section 3.3).
	
	\begin{Definition}
		Let $V$ be a vector group over $K$. By a representation of $V$ on a (finite dimensional) $K$-vector space $W$ we mean a homomorphism of rigid group varieties \[\rho:V\to \GL(W).\]
		We say that this is algebraic if it arises from a morphism of group schemes $V\to \GL(W)$ via analytification. (We will see below that this is always the case.)
	\end{Definition}
	Given any $\tau$-$V$-vector extension $V\to E\to A$ (where $\tau$ denotes either analytic or $v$-topology), and any representation $\rho:V\to \GL(W)$, the push-out 
	\[ E\times^{\rho}\GL(W)\to E\]
	of $E$ along $\rho$ is a $\tau$-vector bundle on $A$. The following theorem says that the vector bundles arising in this way are precisely the unipotent ones.
	This is the main result of this section: 
	\begin{Theorem} \label{rslt:characterize-tau-unipotent-VB-via-algebraic-reps}
		Let $A$ be an abeloid variety over $K$ and let $\tau \in \{ \an, v \}$. Then the universal $\tau$-vector extension $E_{\tau}A$ defines an equivalence of categories
		\begin{alignat*}{2}
			\{\text{representations of $H^1_\tau(A, \O)^* \tensor \mathbb G_a$} \}&\isomarrow&&\{ \text{unipotent $\tau$-vector bundles on A}\}\\
			\rho:H^1_\tau(A, \O)^* \tensor \mathbb G_a\to \GL(W)\quad\quad &\mapsto && \quad\quad E_{\tau}A\times^{\rho} \GL(W)
		\end{alignat*}
	\end{Theorem}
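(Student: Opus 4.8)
I want to construct a functor in each direction and show they are mutually inverse, working uniformly for both $\tau \in \{\an, v\}$. The forward functor is already described in the statement: a representation $\rho\colon H^1_\tau(A,\O)^* \tensor \G_a \to \GL(W)$ is sent to the pushout $E_\tau A \times^\rho \GL(W)$, the $\tau$-vector bundle associated to the $\GL(W)$-torsor obtained by pushing the universal extension $E_\tau A$ (viewed as a $V$-torsor over $A$ via \cref{rslt:Ext-A-Ga-is-H1-of-O}, with $V = H^1_\tau(A,\O)^* \tensor \G_a$) along $\rho$. First I would verify this lands in unipotent bundles: since every representation of a vector group is unipotent (the image lies in a conjugate of the upper-triangular unipotent subgroup, as $V$ is a successive extension of copies of $\G_a$ and $\Hom(\G_a,\G_m)$ is trivial), a filtration of $W$ by $V$-subrepresentations with trivial subquotients pushes out to a filtration of the bundle by trivial sub-bundles.

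\textbf{The inverse functor.} Going backwards, I would take a unipotent $\tau$-vector bundle $U$ and produce a representation of $V = H^1_\tau(A,\O)^* \tensor \G_a$. The key is that $U$, being a successive extension of trivial bundles, corresponds to a unipotent representation $\rho_U$ of the \emph{fundamental group} (for $\tau = v$, via \cref{rslt:profet-vb-vs-Reps} this factors through $T_pA \to \GL(W)$ by \cref{rslt:unipotent-trivial-on-wtA}). The crucial link is that $V$ is built precisely so that $\Hom_\cts(T_pA, K) = H^1_\tau(A,\O)$ (by \cref{rslt:H1-as-Homs} in the $v$-case), so that $V(K) = H^1_\tau(A,\O)^* \tensor_K K$ is naturally the \emph{continuous dual completion direction} of $T_pA$. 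Because the representation is unipotent, its restriction to each one-parameter subgroup is given by exponentiating a nilpotent matrix; the logarithm of $\rho_U$ is a Lie-algebra representation, i.e.\ a linear map from $V$ (as the relevant tangent/Lie object) into the nilpotent endomorphisms of $W$, which is exactly an algebraic representation of the vector group $V$. I would make this precise by extracting from the descent datum of $U$ along $\wt A^p \to A$ a compatible system of nilpotent operators and assembling them into a single $V$-representation.

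\textbf{Matching the two constructions.} To show these are inverse, I would check on successive extensions, using the identification of $\Ext^1$-groups. The universal property of $E_\tau A$ from \cref{rslt:universal-vector-extension-exists} says that extensions of $A$ by any vector group are classified by maps out of $H^1_\tau(A,\O)^*$; correspondingly, the extension class of $U$ in $\Ext^1_\tau(\O,\O) = H^1_\tau(A,\O)$ must match the linear part of the representation $\rho_U$. Concretely, for rank-two unipotent $U$ given by a class $c \in H^1_\tau(A,\O)$, I would verify that pushing $E_\tau A$ along the corresponding representation of $V$ reproduces exactly the extension $c$; this reduces to the defining pairing between $H^1_\tau(A,\O)$ and $H^1_\tau(A,\O)^* = V(K)/$(higher), which is the identity by construction of $E_\tau A$. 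Fully faithfulness and essential surjectivity then follow by dévissage along the unipotent filtration, using that both sides are exact categories in which morphisms are controlled by $\Hom$ and $\Ext^1$ of trivial objects.

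\textbf{The main obstacle.} The hard part will be the inverse functor in the $v$-topological case: making rigorous that a unipotent $v$-bundle yields not merely an abstract unipotent representation of $T_pA$ but an \emph{algebraic} representation of the vector group $V = H^1_v(A,\O)^* \tensor \G_a$, and that this identification is natural and compatible with extensions. The point is that a continuous unipotent representation $T_pA \to \GL(W)$ factors, after taking logarithm, through the $K$-linear map $T_pA \tensor_{\Z_p} K \to \mathfrak{gl}(W)$, and one must identify the relevant source with $V$ via \cref{rslt:H1-as-Homs} and \cref{rslt:Hom(G_a)}, checking that the resulting map of group varieties is genuinely algebraic (so that the promised remark ``this is always the case'' holds). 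I expect this to require care in comparing the Cartan--Leray descent data for $\wt A^p \to A$ with the pushout construction, and in verifying that the logarithm/exponential dictionary is an equivalence of exact categories rather than just a bijection on isomorphism classes.
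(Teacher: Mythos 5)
There is a genuine gap: your proposal only really proves the case $\tau = v$, and it misplaces the difficulty. Your inverse functor is built on the identification $H^1_\tau(A,\O) = \Hom_{\cts}(T_pA, K)$, so that $H^1_\tau(A,\O)^*\tensor\G_a = T_pA\tensor_{\Z_p}\G_a$ and the log/exp dictionary (the paper's \cref{rslt:algebraic-reps-of-vector-groups-equiv-unip-reps-of-Z-p-mod}) turns the unipotent $T_pA$-representation attached to $U$ into an algebraic representation of this vector group. That identification holds only for $\tau = v$ (\cref{rslt:H1-as-Homs}); for $\tau = \an$ the Hodge--Tate sequence exhibits $H^1_\an(A,\O)$ as a \emph{proper} subspace of $H^1_v(A,\O) = \Hom_{\cts}(T_pA,K)$ whenever $\dim A > 0$. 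So for an analytic unipotent bundle your construction produces a representation of $H^1_v(A,\O)^*\tensor\G_a$, not of $H^1_\an(A,\O)^*\tensor\G_a$, and the entire content of the analytic half of the theorem is exactly the statement that this representation factors through the Hodge--Tate quotient $H^1_v(A,\O)^*\tensor\G_a \surjto H^1_\an(A,\O)^*\tensor\G_a$ precisely when the bundle is analytic. Nothing in your outline addresses this; a $T_pA$-representation by itself cannot see analyticity, since the functor of \cref{rslt:profet-vb-vs-Reps} forgets it. This is also why your ``main obstacle'' paragraph points in the wrong direction: the $v$-case is the easy one, disposed of by the log/exp dictionary, and indeed your $v$-argument is essentially the paper's.

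The paper closes this gap with a dedicated cohomological comparison, \cref{rslt:cartesian-diagram-for-sub-vector-group-of-unipotent}: for a vector subgroup $V$ of a unipotent group $U_n$, the square formed by $H^1_\an(A,V)$, $H^1_v(A,V)$, $H^1_\an(A,U_n)$, $H^1_v(A,U_n)$ is Cartesian (proved by induction on $\dim U_n$ using torsor arguments over affinoid covers), combined with the Cartesian square coming from \cref{rslt:unipotent-plus-analytic-implies-analytically-unipotent}. Together these show that the $v$-vector extension $0 \to V \to E \to A \to 0$ attached to a unipotent bundle $F$ is an \emph{analytic} vector extension if and only if $F$ is analytic, after which the universal property of $E_\an A$ applies. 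If you wanted to avoid this lemma by running your d\'evissage purely inside the analytic category, you would need to match $\Ext$-groups of unipotent analytic bundles with algebraic group cohomology of $H^1_\an(A,\O)^*\tensor\G_a$ including degree-two terms (to lift extensions step by step), which requires input on $H^2_\an(A,\O)$ that neither you nor the paper's quoted results provide. Either way, a new idea is needed for $\tau=\an$ beyond what you wrote.
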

	
	For the proof, we start with the following two easy results about algebraic representations of vector groups in general. We first recall the definition of the matrix logarithm and exponential. These will also be important later on.
	
	\begin{Lemma} \label{rslt:matrix-log-exp}
		Let $R$ be a $\Q$-algebra, $W$ a finite free $R$-module and let $U(W)\subseteq \GL_R(W)$ and $N(W)\subseteq \GL_R(W)$ denote the sets of unipotent and nilpotent $R$-linear automorphisms of $W$, respectively. Then there is a natural bijection
		\begin{align*}
			U(W) &\isotofrom N(W)\\
			u & \mapsto \log(u) := \sum_{k=1}^\infty \frac{(-1)^{k+1}}k (u - 1)^k\\
			\exp(n) := \sum_{k=0}^\infty \frac1{k!} n^k & \mapsfrom n
		\end{align*}
		Moreover, we have:
		\begin{lemenum}
			\item If $u, u' \in U(W)$ commute then $\log(u)$ and $\log(u')$ commute and 
			\[\log(uu') = \log(u) + \log(u').\]
			\item If $n, n' \in N(W)$ commute then $\exp(n)$ and $\exp(n')$ commute and 
			\[\exp(n + n') = \exp(n) \exp(n').\]
			\item Let $u\in U(W)$ and $n\in N(W)$, then the following are equivalent:
			\[\text{$u$ and $n$ commute} \Leftrightarrow 
			\text{$u$ and $\exp (n)$ commute} \Leftrightarrow
			\text{$\log(u)$ and $n$ commute}.\]
			
		\end{lemenum}
	\end{Lemma}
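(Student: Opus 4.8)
The crucial preliminary observation is that, because $W$ is finite free of some rank $n$, every $\nu\in N(W)$ satisfies $\nu^n=0$ by Cayley--Hamilton, and for $u\in U(W)$ the difference $u-1$ is nilpotent for the same reason. Hence both defining series are in fact \emph{finite} sums: $\log(u)$ is a polynomial in $u-1$ with no constant term, and $\exp(\nu)$ is a polynomial in $\nu$, the coefficients $\tfrac{(-1)^{k+1}}{k}$ and $\tfrac{1}{k!}$ being legitimate elements of the $\Q$-algebra $R$. There are thus no convergence issues, and the entire lemma can be deduced from a small number of \emph{universal} identities between formal power series over $\Q$, pulled back along substitution of (commuting) nilpotent elements. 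The key structural fact I will use repeatedly is that if $\nu$ is nilpotent then $T\mapsto \nu$ extends to a genuine $\Q$-algebra homomorphism $\Q[[T]]\to R[\nu]$, since a bound $\nu^n=0$ truncates every power series to a polynomial; and if $\nu,\nu'$ are nilpotent \emph{and commute}, then $R[\nu,\nu']$ is commutative and $(S,T)\mapsto(\nu,\nu')$ similarly extends to a homomorphism $\Q[[S,T]]\to R[\nu,\nu']$, since some power of the ideal $(S,T)$ maps to zero.

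First I would check that the two maps land in the claimed sets: $\log(u)$ is a polynomial in the nilpotent element $u-1$ with vanishing constant term, hence nilpotent, so $\log(u)\in N(W)$; and $\exp(\nu)=1+(\text{nilpotent})$, so $\exp(\nu)-1$ is nilpotent and $\exp(\nu)\in U(W)$, in particular invertible. To see that the maps are mutually inverse, I would invoke the standard formal identities $\exp(\log(1+T))=1+T$ and $\log(\exp(T))=T$ in $\Q[[T]]$ and apply the substitution homomorphism $T\mapsto u-1$ (respectively $T\mapsto\nu$): this yields $\exp(\log u)=u$ and $\log(\exp\nu)=\nu$ directly, establishing the bijection $U(W)\cong N(W)$.

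For (i) and (ii), I would use the two-variable identities $\log\big((1+S)(1+T)\big)=\log(1+S)+\log(1+T)$ and $\exp(S+T)=\exp(S)\exp(T)$ in $\Q[[S,T]]$ (the latter being the binomial theorem, the former its consequence, as $\log$ and $\exp$ are mutually inverse group homomorphisms between $(\mathfrak{m},+)$ and $(1+\mathfrak{m},\times)$ for $\mathfrak{m}=(S,T)$). Given commuting $u,u'\in U(W)$, the elements $u-1$ and $u'-1$ commute and are nilpotent, so substituting $S\mapsto u-1$, $T\mapsto u'-1$ into the first identity gives $\log(uu')=\log(u)+\log(u')$; moreover $\log(u)$ and $\log(u')$ are polynomials in the commuting elements $u$ and $u'$ and hence commute. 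Statement (ii) is entirely symmetric, substituting commuting nilpotents into $\exp(S+T)=\exp(S)\exp(T)$.

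Finally, (iii) is purely formal and needs no new identities: $\log(u)$ is a polynomial in $u$ while $u=\exp(\log u)$ is a polynomial in $\log(u)$, and likewise $\exp(\nu)$ is a polynomial in $\nu$ while $\nu=\log(\exp\nu)$ is a polynomial in $\exp(\nu)$. Therefore an element commutes with $u$ if and only if it commutes with $\log(u)$, and with $\nu$ if and only if it commutes with $\exp(\nu)$, which yields all three equivalences at once. The only point demanding genuine care is the justification of the substitution homomorphisms out of $\Q[[T]]$ and $\Q[[S,T]]$ — that is, verifying that evaluation at a nilpotent (respectively, at a pair of commuting nilpotents) is well defined and multiplicative — since this is precisely what transports the formal identities to the matrix setting; everything else is bookkeeping.
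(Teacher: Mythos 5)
Your proof is correct. For the bijection and parts (i)--(ii) it takes essentially the same route as the paper: both arguments note that unipotence/nilpotence truncates all the series to polynomials and then import universal identities among formal power series (the paper simply cites the analogous statements in $R[[X]]$ and $R[[X,X']]$; you additionally make explicit the substitution homomorphisms $\Q[[T]]\to R[\nu]$ and $\Q[[S,T]]\to R[\nu,\nu']$ that transport those identities, which is precisely the point the paper leaves implicit). The genuine divergence is in part (iii). The paper deduces the equivalence of the last two statements from (i) and (ii), observes that ``$u$ and $n$ commute $\Rightarrow$ $u$ and $\exp(n)$ commute'' is clear, and proves the remaining converse by a conjugation trick: if $u\exp(n)=\exp(n)u$ then $\exp(n)=u\exp(n)u^{-1}=\exp(unu^{-1})$, whence $n=unu^{-1}$ by injectivity of $\exp$. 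You instead observe that $u$ and $\log(u)$ each lie in the other's polynomial ring (since $u=\exp(\log u)$ and both series are finite), and likewise $n$ and $\exp(n)$, so each pair has the same centralizer; all three equivalences then drop out simultaneously. Your variant is marginally cleaner --- it needs neither the injectivity of $\exp$ nor the identity $u\exp(n)u^{-1}=\exp(unu^{-1})$ (which itself requires a word of justification) --- whereas the paper's is a quick one-liner once the bijection is in hand. Both arguments are complete and correct.
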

	\begin{proof}
		The involved power series are all finite by definition of unipotent and nilpotent automorphisms. The bijection and (i) and (ii) therefore follow from the analogous statements about formal power series in $R[[X]]$, respectively $R[[X,X']]$.
		
		To see (iii), it is clear from (i) and (ii) that the last two statements are equivalent. For the first equivalence, the implication $\Rightarrow$ is also clear. To see $\Leftarrow$, observe that $unu^{-1}$ is also a nilpotent matrix, and if $u\exp(n)=\exp(n)u$ then
		\[\exp(n)=u\exp(n)u^{-1}=\exp(unu^{-1}).\]
		It thus follows from injectivity of $\exp$ that $n$ and $unu^{-1}$ coincide.
	\end{proof}

	\begin{Lemma} \label{rslt:algebraic-reps-of-vector-groups-equiv-unip-reps-of-Z-p-mod}
		Let $K$ be any non-archimedean field over $\Q_p$ and $T$ a finite free $\Z_p$-module. Consider $V:=T\otimes_{\Z_p}\G_a$ as a vector group. Then any rigid analytic representation
		\[ T\otimes_{\Z_p}\G_a\to \GL_n\]
		is unipotent and factors through some vector-subgroup of $\GL_n$. In particular, the following categories are equivalent:
		\begin{enumerate}[(a)]
			\item Algebraic representations of $V$ on finite-dimensional $K$-vector spaces.
			\item Rigid analytic representations of  $V$ on finite-dimensional $K$-vector spaces.
			\item Continuous unipotent representations of $T$ on finite-dimensional $K$-vector spaces.
			\item Pairs $(W, (u_i)_i)$, where $W$ is a finite-dimensional $K$-vector space and $(u_i)_i$ is an $\rk T$-tuple of pairwise commuting unipotent automorphisms of $W$.
		\end{enumerate}
		The equivalence (a) $\isomarrow$ (b) is analytification,  (b) $\isomarrow$ (c) is given by $\rho \mapsto \rho(K)|_T$ and the equivalence (c) $\isomarrow$ (d) is given by $\rho' \mapsto (\rho'(e_i))_i$, where $(e_i)_i$ is a fixed basis of $T$.
	\end{Lemma}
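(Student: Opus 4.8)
The plan is to first establish the core claim—that any rigid analytic representation $\rho\colon V = T\otimes_{\Z_p}\G_a \to \GL_n$ is automatically unipotent and algebraic—and then deduce the equivalences (a)--(d) as essentially formal consequences of the matrix logarithm/exponential dictionary of \cref{rslt:matrix-log-exp}. The key structural input is that $V$ is a commutative unipotent group: choosing a $\Z_p$-basis $e_1,\dots,e_r$ of $T$ identifies $V$ with $\G_a^r$, and I first reduce to the rank-one case by noting that $\rho$ restricts to $r$ pairwise commuting one-parameter subgroups $\rho_i\colon \G_a \to \GL_n$ whose images generate the image of $\rho$. So the heart of the matter is to analyse a single rigid homomorphism $\G_a \to \GL_n$.

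For such a one-parameter subgroup $\psi\colon \G_a \to \GL_n$, I would argue as follows. Taking the induced map on tangent spaces (equivalently, differentiating the power series defining $\psi$ at the origin) produces a matrix $X \in \End(K^n)$, and the homomorphism property forces $\psi(s+t) = \psi(s)\psi(t)$, so that $\psi$ satisfies the functional equation of an exponential. The hard part—and the main obstacle—is to show that $X$ must be \emph{nilpotent}: a priori a rigid analytic homomorphism from $\G_a$ need not be given by a global matrix exponential, since $t\mapsto \exp(tX)$ only converges on a disc unless $X$ is nilpotent, whereas $\G_a$ here is the full rigid affine line. I would resolve this by the standard trick of passing to eigenvalues: after extending scalars and triangularising, any nonzero eigenvalue $\lambda$ of $X$ would force a diagonal entry of $\psi(t)$ to behave like a nonconstant homomorphism $\G_a \to \G_m$, i.e.\ an everywhere-convergent analytic function $f$ with $f(s+t)=f(s)f(t)$ and $f'(0)=\lambda\neq 0$; but an everywhere-convergent such $f$ is $\exp(\lambda t)$, which is \emph{not} a rigid analytic (globally bounded-by-power-series) function on the whole affine line over a $p$-adic field, since $\exp$ has finite radius of convergence. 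This contradiction forces all eigenvalues to vanish, so $X$ is nilpotent and $\psi(t) = \exp(tX)$ with $\psi(t)$ unipotent for all $t$; in particular $\psi$ is algebraic, being a polynomial in $t$.

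With the rank-one case in hand, the general statement follows: each generator $e_i$ gives a nilpotent $X_i = \log \rho(e_i)$ (well-defined since $\rho(e_i)$ is unipotent), the $X_i$ pairwise commute because the $\rho_i$ do and by \cref{rslt:matrix-log-exp}(iii), and $\rho$ is reconstructed on $V(K) = T\otimes K$ by $t_1 e_1 + \dots + t_r e_r \mapsto \exp(\sum_i t_i X_i)$, which is manifestly algebraic and lands in the vector subgroup $\exp(\mathrm{span}\,X_i)$ of $\GL_n$. This proves the first assertion and simultaneously gives (a)$\cong$(b). The equivalence (b)$\isomarrow$(c) sends $\rho$ to its restriction $\rho(K)|_T$; I would check this is well-defined (landing in unipotent representations by the above) and essentially surjective by using $\exp$ to extend a unipotent representation of $T$ to all of $V(K)$, with continuity of the resulting map ensured since $T \hookrightarrow T\otimes K$ is dense and $\exp$ of a nilpotent matrix is polynomial. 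Finally (c)$\isomarrow$(d) is the bookkeeping bijection $\rho' \mapsto (\rho'(e_i))_i$: a continuous unipotent representation of the free $\Z_p$-module $T$ is determined by the commuting unipotent images of a basis, and conversely any tuple of pairwise commuting unipotents $(u_i)$ extends to a continuous homomorphism via $\sum n_i e_i \mapsto \prod u_i^{n_i}$, with \cref{rslt:matrix-log-exp}(i) guaranteeing both well-definedness and continuity (as the map factors through $\exp$ of the commuting nilpotents $\log u_i$). Each of these last two equivalences is routine once the nilpotency of $\log u_i$ and the compatibility with the group laws from \cref{rslt:matrix-log-exp} are invoked.
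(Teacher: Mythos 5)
Your proposal is correct, but it reaches the crucial claim (automatic unipotence and algebraicity of rigid analytic representations of $V$) by a genuinely different route than the paper. The paper argues globally: it conjugates so that $\rho|_T$ is upper triangular, invokes the rigid analytic Identity Theorem (Zariski density of $\Z_p^r$ in $\G_a^r$) to conclude that $\rho$ itself is upper triangular, and then kills the diagonal using $\Hom(\G_a,\G_m)=1$, deduced from $\O^\times(\G_a)=K^\times$. You instead argue infinitesimally: reduce to one-parameter subgroups $\psi\colon \G_a\to\GL_n$, differentiate the functional equation to get $\psi'(t)=X\psi(t)$ with $X=\psi'(0)$, and force nilpotency of $X$ from the finite radius of convergence of the $p$-adic exponential -- a nonzero eigenvalue over $\widehat{\overline{K}}$ would make a Taylor series at $0$ of a global rigid function on $\A^1$ equal to $e^{\lambda t}$, which is impossible. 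This is in effect an infinitesimal reproof of $\Hom(\G_a,\G_m)=1$. What your route buys: the explicit Lie-theoretic form $\rho=\exp(\sum_i t_iX_i)$ with $X_i=\log\rho(e_i)$ falls out immediately, which makes all four equivalences and the full faithfulness of restriction to $T$ essentially automatic, without the paper's repeated appeals to Zariski density; your explicit extension of scalars also cleanly handles a point where the paper is terse (triangularising $\rho|_T$ requires eigenvalues in the base field, i.e.\ a base change to $\widehat{\overline{K}}$, which is harmless since unipotence descends). What the paper's route buys: it avoids setting up derivatives of rigid morphisms and power-series solutions of ODEs, and the Zariski-density lemma it isolates is reused verbatim in the later steps of its proof ((c) $\to$ (a) and the round-trip check), so nothing extra has to be established.

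One slip to repair: in your step (b) $\to$ (c) you justify compatibility by saying ``$T\hookrightarrow T\tensor K$ is dense.'' In the metric topology this is false ($\Z_p$ is compact, hence closed and not dense in $K$). What the verification actually needs is that $\Z^r$ is dense in $T=\Z_p^r$: the algebraic extension $\exp(\sum_i t_iX_i)$ and the given continuous representation $\rho'$ agree on $\Z^r$ by \cref{rslt:matrix-log-exp}, and both are continuous on $T$, hence they agree on $T$. (Zariski density of $T$ in $V$ is the statement relevant to uniqueness of the extension, though with your explicit formula uniqueness is automatic anyway.) This is a routine fix and does not affect the validity of the argument.
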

	\begin{proof}
		We start by proving that any rigid analytic representation $\rho:T\otimes_{\Z_p}\G_a\to \GL_n$ is unipotent, and in particular (b)$\to $(c) is well-defined. To see this, consider the restriction $\rho_{|T}$. We may after conjugating $\rho$ by matrices in $\GL_n(K)$ assume that $\rho_{|T}$ is upper triangular. 
		
		We claim that then already $\rho$ is upper triangular:
		To see this, choose a basis $T\cong \Z_p^r$, then we observe that the matrix entries define rigid analytic functions $\G_a^r\to \G_a$. By the rigid analytic Identity Theorem (see e.g.\ \cite[Corollary~5.1.4.5]{BGR}), the subset $\Z_p^r\subseteq \B^r$ of the closed unit ball $\B^r$ is Zariski-dense, and thus also $\Z_p^r\subseteq \G_a^r$ is Zariski-dense, i.e.\ a rational function is uniquely determined on its values on $\Z_p^r\subseteq \G_a^r$. This shows that the subdiagonal matrix entries vanish because they vanish on $T$. Hence $\rho$ is an upper triangular representation.
		
		The composition with the projection from the upper triangular matrices to the diagonal is therefore a homomorphism. But since $\O^\times(\G_a)=K^\times$, we have
		\[ \Hom(\G_a,\G_m)=1.\]
		This shows that $\rho$ is indeed unipotent, and the functor (b)$\to$ (c) is well-defined.

		A quasi-inverse (d) $\to$ (c) is given as follows: For every finite-dimensional $K$-vector space $W$, every unipotent automorphism $u$ of $W$ and every $\gamma \in \Z_p$ let
		\begin{align*}
			u^\gamma :=\exp(\gamma\log(u))=\sum_{k=0}^\infty \binom{\gamma}k (u - 1)^k.
		\end{align*}
		Here the second equality again follows from formal properties of $\exp$ and $\log$.
		It is clear from this definition that $u^\gamma$ is continuous in $\gamma$. Moreover, we have $u^{\gamma_1 + \gamma_2} = u^{\gamma_1} u^{\gamma_2}$ by Lemma~\ref{rslt:matrix-log-exp}.(ii). We can thus define the functor (d) $\to$ (c) by $(W, (u_i)_i) \mapsto \rho'$, where $\rho'$ is the representation of $T$ on $W$ with $\rho'(\sum_i \gamma_i e_i) = \prod_i u_i^{\gamma_i}$.
		
		We now construct (c) $\to$ (a), so let $\rho'\colon T \to \GL(W)$ be an object of (c). By \cref{rslt:matrix-log-exp}, the composed map
		\begin{align*}
			\theta := \log \comp \rho'\colon T \to \End(W)    
		\end{align*}
		is continuous, additive and has image in pairwise commuting nilpotent matrices. By continuity, $\theta$ is $\Z_p$-linear. Now by Lemma~\ref{rslt:Hom(G_a)} there is a natural isomorphism
		\[ \Hom_{\cts}(\Z_p,K)=\Hom(\G_a,\G_a).\]
		As $\End(W)$ is a vector space, with associated vector group $\underline{\End}(W)$, it follows that
		\begin{align*}
			\Hom_{\Z_p}(T, \End(W)) = \End(W)^{\rk T} = \Hom_{\mathrm{alg}}(T \tensor_{\Z_p} \mathbb G_a, \underline{\End}(W)).
		\end{align*}
		In particular, $\theta$ extends uniquely to an algebraic map $\theta\colon T \tensor_{\Z_p} \mathbb G_a \to \underline{\End}(W)$. Again by Zariski-density, for every $K$-algebra $R$ the image of $\theta(R)\colon T \tensor_{\Z_p} R \to \underline{\End}(W)(R)$ lies in pairwise commuting nilpotent matrices, so by \cref{rslt:matrix-log-exp} we can compose $\theta$ with $\exp$ (note that $\log$ and $\exp$ are in fact \textit{algebraic} on the unipotent and nilpotent matrices, respectively) to get an algebraic map
		\begin{align*}
			\rho := \exp \comp \theta\colon T \tensor_{\Z_p} \mathbb G_a \to \underline{\Aut}(W),
		\end{align*}
		i.e.\ an object of (a).

		It remains to check that starting with a rigid analytic representation $\rho:T\otimes_{\Z_p}\G_a\to \GL_n$,  sending $(b)\to (c)\to (a)\to (b)$ produces a representation $\rho'$ which is isomorphic to $\rho$. To see this, we note that we can regard $\rho$ and $\rho'$ as rigid analytic functions
		\[\rho,\rho':\G_a\otimes_{\Z_p} T\to \GL(W).\]
		By construction, these agree on $T$.  By the above Zariski-density argument, this implies that they agree on $\G_a\otimes_{\Z_p} T$.
	\end{proof}
	
	Similarly as for unipotent vector bundles, it is not a priori clear, but true, that an analytic vector extension is the same as a $v$-vector extension that is locally free in the analytic topology. This is guaranteed by the following lemma:
	\begin{Lemma} \label{rslt:cartesian-diagram-for-sub-vector-group-of-unipotent}
		Let $A$ be an abeloid variety over $K$, let $U$ be a unipotent algebraic group over $K$, considered as a rigid analytic group via analytification. Let $V \subset U$ be a rigid subgroup that is a vector group. Then the following square is Cartesian and all its maps are injective:
		\begin{center}\begin{tikzcd}
				H^1_\an(A, V) \arrow[r] \arrow[d] & H^1_v(A, V) \arrow[d]\\
				H^1_\an(A, U) \arrow[r] & H^1_v(A, U).
		\end{tikzcd}\end{center}
	\end{Lemma}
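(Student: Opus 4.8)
\smallskip

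The plan is to prove both assertions by d\'evissage along a central series of $U$, reducing everything to the case in which $U$ is itself a vector group; there the statement becomes linear algebra controlled by the left-exact Hodge--Tate sequence \eqref{HTseq}. Two facts are used throughout. First, since $A$ is proper and connected with $\O(A)=K$, every morphism $A\to U$ to the affine group $U$ is constant, so $H^0_\an(A,U)=H^0_v(A,U)=U(K)$ with the comparison map the identity, and likewise for $V$ and for every subquotient appearing below. Second, for a vector group $W\tensor\G_a$ one has $H^1_\tau(A,W\tensor\G_a)=W\tensor_K H^1_\tau(A,\O)$ for $\tau\in\{\an,v\}$, and the comparison map is $\id_W\tensor$ the inclusion $H^1_\an(A,\O)\hookrightarrow H^1_v(A,\O)$ from \eqref{HTseq}. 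On the $v$-side I would make this explicit via the pro-$p$-cover $\wt A^p\to A$: by \cref{rslt:p-adic-cover-O-acyclic} and \cref{rslt:H1-as-Homs} one has $H^1_v(A,W\tensor\G_a)=\Hom_{\cts}(T_pA,W)$, so that the image of $H^1_\an$ is the subspace $W\tensor\ker(\HT)$.

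\smallskip

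In the base case $U=W_U\tensor\G_a$ is a vector group and $V=W_V\tensor\G_a$ a vector subgroup, i.e.\ $W_V\subseteq W_U$ is a $K$-subspace. Then the whole square is obtained from the subspace inclusion $H^1_\an(A,\O)\subseteq H^1_v(A,\O)$ by tensoring with $W_V\subseteq W_U$; its corners are the subspaces $W_V\tensor H^1_\an(A,\O)$, $W_V\tensor H^1_v(A,\O)$, $W_U\tensor H^1_\an(A,\O)$, $W_U\tensor H^1_v(A,\O)$ of $W_U\tensor H^1_v(A,\O)$. All four maps are injective because tensoring a subspace inclusion with a fixed space over a field stays injective, and the square is Cartesian because for subspaces $W_V\subseteq W_U$ and $H_\an\subseteq H_v$ one has $(W_V\tensor H_v)\cap(W_U\tensor H_\an)=W_V\tensor H_\an$ inside $W_U\tensor H_v$. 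This settles the vector-group case.

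\smallskip

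For general unipotent $U$ I would induct on $\dim U$ using a central subgroup $Z\cong\G_a^k$ contained in the centre, with quotient $\bar U:=U/Z$ again unipotent of smaller dimension. Writing $\bar V$ for the image of $V$ in $\bar U$ and $Z_V:=V\cap Z$, the inclusion $V\hookrightarrow U$ is filtered by the vector-group pair $(Z_V,Z)$ and the pair $(\bar V,\bar U)$, both of which satisfy the statement (the first by the base case, the second by induction). The associated long exact sequences of (nonabelian) cohomology
\[ \bar U(K)\to H^1_\tau(A,Z)\to H^1_\tau(A,U)\to H^1_\tau(A,\bar U)\to H^2_\tau(A,Z) \]
are functorial in $\tau$ and compatible with $V\hookrightarrow U$; since $Z$ is central these are genuine exact sequences of groups and pointed sets, with an $H^1_\tau(A,Z)$-action describing the fibres of $H^1_\tau(A,U)\to H^1_\tau(A,\bar U)$. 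Using the agreement of the $H^0$-terms, the base case for $Z$, the inductive hypothesis for $\bar U$, and a careful comparison of these two pointed sequences, one aims to propagate the Cartesian property and the relevant injectivities from the subquotients to $(V,U)$.

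\smallskip

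I expect the main obstacle to be precisely this last comparison: because $U$ is genuinely non-abelian, $H^1_\tau(A,U)$ is only a pointed set, the fibre product in the Cartesian assertion is formed over this pointed set, and a naive five-lemma does not close, since an element of $H^1_\tau(A,Z)$ pushed into $H^1_\tau(A,U)$ can stabilise a class without being trivial. Thus the delicate point is to control these stabilisers and the twisting $H^1_\tau(A,Z)$-action, and to bound the boundary into $H^2_\tau(A,Z)$; the latter again requires a degree-$2$ comparison for the \emph{vector} group $Z$, which I would extract from the Leray/Hodge--Tate formalism for $\nu\colon A_v\to A_\an$. Establishing exactness and naturality at this non-abelian, degree-raising step, rather than the vector-group input itself, is where the real work lies.
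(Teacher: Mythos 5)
Your base case is fine: for a vector subgroup of a vector group the whole square is the tensor product of the inclusions $W_V\subseteq W_U$ and $H^1_\an(A,\O)\subseteq H^1_v(A,\O)$, and the intersection identity you invoke is correct over a field. The gap is exactly the one you flag yourself: the inductive step for non-abelian $U$ is never carried out, and it is the entire content of the lemma beyond the abelian case. Propagating Cartesianness through the pointed-set sequences $H^1_\tau(A,Z)\to H^1_\tau(A,U)\to H^1_\tau(A,\bar U)$ requires controlling the stabilizers of the $H^1_\tau(A,Z)$-action on $H^1_\tau(A,U)$; for a central $Z$ these stabilizers are images of boundary maps from $H^0_\tau$ of torsor-twisted forms of $\bar U$, so closing your chase would in addition require comparing $v$-sections and analytic sections of these twisted $A$-groups, plus the degree-two comparison for $Z$ that you mention. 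None of this is done, so the proposal is a plan with a correct first step rather than a proof. (Note also that the injectivity assertions for general $U$ are deferred to this same unfinished induction, whereas they can be had for free: \cref{rslt:H1-as-Homs} applies to the unipotent group $U$ itself, identifying the right vertical map with $\Hom_{\cts}(T_pA,V(K))\to\Hom_{\cts}(T_pA,U(K))$, which is injective.)

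The paper's proof avoids the non-abelian difficulty entirely by never running the d\'evissage on $A$. Given $f\in H^1_v(A,V)$ whose image in $H^1_v(A,U)$ is analytic, that image trivializes on a cover $\mathfrak W$ of $A$ by affinoid opens (analytic torsors are analytically locally trivial); and a class in $H^1_v(A,V)$ that restricts to zero on every member of an analytic cover comes from $H^1_\an(A,V)$ by \v{C}ech descent. Hence it suffices to prove a purely local statement: for affinoid $W$, the map $H^1_v(W,V)\to H^1_v(W,U)$ has trivial fibre over $0$. This is then proved by induction on $\dim U$ along a subgroup $U_0\cong\G_a$, and the induction closes precisely because $W$ is affinoid: $H^1_\an(W,U_0)=0$ forces $U(W)\to U'(W)$ to be surjective, so the connecting map $U'(W)\to H^1_v(W,U_0)$ is trivial and $H^1_v(W,U_0)\to H^1_v(W,U)$ has trivial fibre over the base point. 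Since only ``trivial fibre over zero'' statements are ever needed, no orbit or stabilizer analysis and no $H^2$ terms arise. This localization to affinoids --- trading the global Cartesian statement for a local vanishing statement, where Tate acyclicity kills the troublesome boundary --- is the idea missing from your proposal.
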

	\begin{proof}
		Clearly the two horizontal maps are injective. By \cref{rslt:H1-as-Homs}, the right vertical map can be identified with the map \[\Hom(T_pA, V(K)) \to \Hom(T_pA, U(K))\]
		and is thus also injective.  It follows that all maps in the diagram are injective.
		
		Let $f$ be any element in $H^1_v(A, V)$ whose image in $H^1_v(A, U)$ comes from $H^1_\an(A, U)$. Then there is a cover $\mathfrak W$ of $A$ by affinoid opens $W \subseteq A$ such that the image of $f$ in each $H^1_v(W, U)$ is trivial. Consider the diagram
		\begin{center}\begin{tikzcd}
				H^1_\an(A, V) \arrow[d] \arrow[r] & H^1_v(A, V) \arrow[d,hook]\arrow[r] & \prod_{W \in \mathfrak W} H^1_v(W, V)\arrow[d]\\
				H^1_\an(A, U) \arrow[r] & H^1_v(A, U) \arrow[r] & \prod_{W \in \mathfrak W} H^1_v(W, U).
		\end{tikzcd}\end{center}
		The rows are not necessarily exact, but every element that goes to $0$ on the right comes from the left. We conclude that it suffices to prove that the vertical morphism on the right has trivial fibre over $0$. 
		
		We argue by induction on the dimension of $U$. The induction start of $U=\G_a$ is clear, here we have to have $V=\G_a$ and any non-trivial homomorphism $\G_a\to \G_a$ is an isomorphism by~\cref{rslt:Hom(G_a)}, so the vertical map is an isomorphism.
		
		If $U$ has dimension $>1$, then we can write it as an extension $0 \to U_0 \to U \to U' \to 0$ with $U_0 \cong \mathbb G_a$. We form the pullback:
		\begin{center}\begin{tikzcd}
				0 \arrow[r] & V_0 \arrow[r] \arrow[d] & V \arrow[r] \arrow[d] & V' \arrow[d] \arrow[r] & 0\\
				0 \arrow[r] & U_0 \arrow[r] & U \arrow[r] & U' \arrow[r] & 0
		\end{tikzcd}\end{center}
		where $V'$ is defined as the quotient $V/V_0$, or equivalently the image of $V$ in $U'$. This is again a vector group by \cref{rslt:Hom(G_a)}.
		Then on each $W \in \mathfrak W$ we obtain a diagram of long exact sequences (the top row of abelian groups, the bottom row of pointed sets)
		\[\begin{tikzcd}
			&& H^1_v(W, V_0) \arrow[r] \arrow[d] & H^1_v(W, V) \arrow[d,"\alpha"] \arrow[r] & H^1_v(W, V') \arrow[d]\\
			U(W) \arrow[r] & U'(W) \arrow[r] & H^1_v(W, U_0) \arrow[r, hook,"\beta"] & H^1_v(W, U) \arrow[r] & H^1_v(W, U').
		\end{tikzcd}\]
		We wish to see that $\alpha$ has trivial fibre over $0$. For this it suffices to see that $\beta$ is injective, since the outer vertical maps have trivial fibre over $0$ by the induction hypothesis. But $H^1_\an(W, U_0) = 0$ since $W$ is affinoid, which implies that $U(W) \to U'(W)$ is surjective.
	\end{proof}
	\begin{proof}[Proof of \cref{rslt:characterize-tau-unipotent-VB-via-algebraic-reps}]
		
		We first note that the given functor is fully faithful: It suffices to prove this for $\tau=v$. By \cref{rslt:algebraic-reps-of-vector-groups-equiv-unip-reps-of-Z-p-mod}, the vector bundle $E_\tau A\times^{\rho}\GL_n$ can then equivalently be described as the pushout of $T_pA\to\wt A\to A$ along $\rho_{|T_pA}$. Regarding unipotent representations of $T_pA$ as a full subcategory of all representations, and similarly for $v$-vector bundles, the fully faithfulness then follows from the equivalence in \cref{rslt:profet-vb-vs-Reps}.
		
		Next, to see essential surjectivity,
		let $F$ be a unipotent $\tau$-vector bundle on $A$. We need to see that there is a vector group $V$ such that $F$ is the pushout of some $\tau$-vector extension $0 \to V \to E \to A \to 0$ along some representation $V \to \GL_n$, which is automatically unipotent by  \cref{rslt:algebraic-reps-of-vector-groups-equiv-unip-reps-of-Z-p-mod}. When this is known, the result follows from the universal property of the universal $\tau$-vector extension, \cref{rslt:universal-vector-extension-exists}.
		
		By \cref{rslt:unipotent-trivial-on-wtA}, the pullback of $F$ to $\wt A$ induces a continuous representation $\rho\colon T_p A \to \GL_n(K)$. By \cref{rslt:algebraic-reps-of-vector-groups-equiv-unip-reps-of-Z-p-mod}, this extends uniquely to an algebraic unipotent representation
		\[T_pA \tensor_{\Z_p} \mathbb G_a \to \GL_n\]
		that factors through a vector-subgroup $V$ of $\GL_n$. 
		
		Since $\rho$ factors over $V$, the commutative diagram of Cartan--Leray maps
		\begin{center}\begin{tikzcd}
				\Hom(T_pA,V(K)) \arrow[r,hook] \arrow[d,hook] & H^1_v(A,V) \arrow[d,hook]\\
				\Hom(T_pA,\GL_n(K))\arrow[r,hook] & H^1_v(A,\GL_n)
		\end{tikzcd}\end{center}
		means that $F$ can naturally be seen as a $V$-torsor. But by \cref{rslt:Ext-A-Ga-is-H1-of-O} we have $H^1_v(A, V) = \Ext^1_v(A, V)$, hence $F$ can be endowed with a unique structure of a $v$-vector extension 
		\[0 \to V \to F \to A \to 0.\]
		It is then clear from this construction that $F$ can be recovered from this as the pushout of $E$ along $V \injto \GL_n$, proving the claim in the case $\tau = v$.
		
		Let now $F$ be an analytic vector bundle. We claim that then our extension $E$ from above is in fact an analytic vector extension. To see this, let $U_n$ denote the group of upper triangular unipotent $n \times n$ matrices. Using $\Ext^1_\tau(A, V) = H^1_\tau(A, V)$ for $\tau \in \{ \an, v \}$ by \cref{rslt:Ext-A-Ga-is-H1-of-O}, \cref{rslt:cartesian-diagram-for-sub-vector-group-of-unipotent} implies that we have a Cartesian diagram
		\begin{center}\begin{tikzcd}
				\Ext^1_\an(A, V) \arrow[r,hook] \arrow[d,hook] & \Ext^1_v(A, V) \arrow[d,hook]\\
				H^1_\an(A, U_n) \arrow[r,hook] & H^1_v(A, U_n)
		\end{tikzcd}\end{center}
		Moreover, by \cref{rslt:unipotent-plus-analytic-implies-analytically-unipotent} we have a Cartesian square
		\begin{center}\begin{tikzcd}
				H^1_\an(A, U_n)\arrow[d,hook] \arrow[r,hook] & H^1_v(A, U_n)\arrow[d,hook]\\
				H^1_\an(A, \GL_n) \arrow[r,hook] & H^1_v(A, \GL_n).
		\end{tikzcd}\end{center}
		Combining these shows that the extension $E$ associated to our unipotent vector bundle $F$ is an analytic vector extension if and only if $F$ is analytic.
	\end{proof}
	
	We can summarise the results of this section by saying that we obtain a commutative diagram of functors in which the vertical arrows are fully faithful embeddings and all other arrows are equivalences of categories
	
	\begin{center}
		\begin{tikzcd}[column sep = 0.2cm,row sep=0.2cm]
			{\Bigg\{\begin{array}{@{}c@{}l}\text{unipotent analytic }\\\text{vector bundles on $A$} \end{array}\Bigg\}} \arrow[rrd,dashed]\arrow[dd, hook] &  &  &  & {\Bigg\{\begin{array}{@{}c@{}l}\text{algebraic representations}\\\text{of $H^1_{\an}(A, \O)^* \tensor \mathbb G_a$} \end{array}\Bigg\}} \arrow[dd, hook] \arrow[llll] \\
			&  &{\Bigg\{\begin{array}{@{}c@{}l}\text{``analytic''}\\\text{representations of $T_pA$}\end{array}\Bigg\}}\arrow[rru,dashed]&  &  \\
			{\Bigg\{\begin{array}{@{}c@{}l}\text{unipotent $v$-}\\\text{vector bundles on $A$} \end{array}\Bigg\}} \arrow[rrd] \arrow[rrrr,leftarrow]&  & &  & {\Bigg\{\begin{array}{@{}c@{}l}\text{algebraic representations}\\\text{of $H^1_v(A, \O)^* \tensor \mathbb G_a$} \end{array}\Bigg\}} \\
			&&{\Bigg\{\begin{array}{@{}c@{}l}\text{unipotent continuous}\\\text{representations of $T_pA$} \end{array}\Bigg\}}\arrow[rru]\arrow[uu,crossing over,hookleftarrow,dashed] &&
		\end{tikzcd}
	\end{center}

	This answers \cref{q:when-is-rep-an} above in the unipotent case (the dashed arrows): In order to detect whether a unipotent representation $\rho:T_pA\to \GL_n(K)$ corresponds to an analytic vector bundle, we extend $K$-linearly to a representation \[\rho\otimes K:T_pA\otimes K\to \GL_n(K),\] and check whether $\rho\otimes K$ factors through  $H^1_{\an}(A, \O)^*$. Equivalently, via the ``dual Hodge--Tate sequence''
	\[0\to H^0(A,\wt\Omega^1)^\ast\to H^1_{v}(A, \O)^*\to H^1_{\an}(A, \O)^*\to 0,\]
	this means that $\rho \otimes K$ vanishes on $H^0(A,\wt\Omega^1)^\ast$.
	\subsection{Universal $\Z_p$-extensions}
	Before going on, we record two alternative points of view that we hope shed some light on the constructions of the last section.
	We begin by giving a different perspective on the $v$-topological part of the last section: 
	
	We have just seen that any representation of a vector group arises as the unique $K$-linear extension of a representation of some $\Z_p$-lattice. There is an analogous statement for $\Z_p$-extensions themselves: 
	\begin{Definition}
		By a $\Z_p$-extension of $A$ we shall mean a short exact sequence of $v$-sheaves
		\[ 0\to T\to E\to A\to 0\]
		where $T\cong \underline{\Z_p}^r$ for some $r\in \N$.
	\end{Definition}
	We have the following analog of the universal vector extension:
	\begin{Proposition}
		There is a universal $\Z_p$-vector extension over $A$ given by the sequence
		\begin{equation}\label{ses:TpA-wtA-A}
			0\to T_pA\to \wt A^{p}\to A\to 0,
		\end{equation}
		i.e.\ every other $\Z_p$-vector extension arises from this via pushout.
	\end{Proposition}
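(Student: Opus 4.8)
The plan is to mirror the proof of the universal vector extension, \cref{rslt:universal-vector-extension-exists}, replacing the vector group $H^1_v(A,\O)^*\tensor\G_a$ by the lattice $T_pA$ and the coefficient sheaf $\O$ by $\underline{\Z_p}$. Concretely, I would first observe that for any $T\cong\underline{\Z_p}^r$ the isomorphism classes of $\Z_p$-extensions of $A$ by $T$ are exactly the elements of $\Ext^1_v(A,T)$ (as for vector extensions, \cref{rslt:Ext-A-Ga-is-H1-of-O}), and then establish the identification
\[
	\Ext^1_v(A,T)=\Hom_{\cts}(T_pA,T(K)).
\]
Under this identification $\Z_p$-extensions of $A$ by $T$ correspond to $\Z_p$-linear maps $f\colon T_pA\to T(K)$, pushout along a homomorphism of coefficient sheaves corresponds to post-composition, and the universal extension is the one attached to $T=\underline{T_pA}$ and $f=\id$.

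To prove the displayed identification I would run the arguments of \cref{rslt:Ext-A-Ga-is-H1-of-O} and \cref{rslt:H1-as-Homs} with $\Z_p$-coefficients. First apply \cref{rslt:BD-in-deg-1} on the site $\LSD_{K,v}$ with $G=A$ and $H=T$: the hypothesis is satisfied because $A^n$ is connected and $\underline{\Z_p}$ is a constant sheaf, so every morphism $A^n\to\underline{\Z_p}$ is constant. This gives
\[
	\Ext^1_v(A,T)=\ker\big(H^1_v(A,T)\xrightarrow{\delta}H^1_v(A\times A,T)\big).
\]
Next I would compute $H^1_v(A,T)$ via the Cartan--Leray sequence for the $T_pA$-torsor $\wt A^p\to A$, exactly as in \cref{rslt:H1-as-Homs}, but invoking $H^1_v(\wt A^p,\Z_p)=0$ from \cref{rslt:p-adic-cover-O-acyclic} in place of the vanishing of $H^1_v(\wt A^p,\O)$; since $\wt A^p$ is connected the coefficients carry the trivial $T_pA$-action, yielding $H^1_v(A,T)=\Hom_{\cts}(T_pA,T(K))$. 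Finally $\delta$ sends a homomorphism $f$ to $(x,y)\mapsto f(x+y)-f(x)-f(y)$, which vanishes, so $\delta=0$ and the identification follows.

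With this in hand the universal property is formal, just as in \cref{rslt:universal-vector-extension-exists}: the bijection $\Ext^1_v(A,-)\cong\Hom_{\cts}(T_pA,-(K))$ is functorial in the coefficient sheaf, so pushout corresponds to post-composition. Taking $T=\underline{T_pA}$ and $f=\id$ produces a distinguished extension whose class is the identity of $\Hom_{\cts}(T_pA,T_pA)$, and I would identify this distinguished extension with \eqref{ses:TpA-wtA-A}. Granting this, for an arbitrary $\Z_p$-extension $E$ with class $f\in\Hom_{\cts}(T_pA,T(K))$, the pushout of \eqref{ses:TpA-wtA-A} along $f$ has class $f_*(\id)=f$, hence is isomorphic to $E$ as an extension; uniqueness of $f$ is also clear since the class determines it.

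The step I expect to require the most care is the identification of the class of \eqref{ses:TpA-wtA-A} with the identity element of $\Hom_{\cts}(T_pA,T_pA)$, i.e.\ recognising that the tautological torsor is the universal extension. I would justify this either by unwinding the Cartan--Leray edge map and observing that the cocycle attached to the $T_pA$-torsor $\wt A^p\to A$ is the tautological one, or, more robustly, by a pullback argument: pulling \eqref{ses:TpA-wtA-A} back along $\wt A^p\to A$ yields an extension of $\wt A^p$ by $T_pA$ whose class lies in $\Ext^1_v(\wt A^p,T_pA)$, and the same Breen--Deligne computation together with $H^1_v(\wt A^p,\Z_p)=0$ shows that this group vanishes. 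Hence the pullback splits, and the descent datum recovering \eqref{ses:TpA-wtA-A} along the $T_pA$-torsor is precisely the identity action of $T_pA$, which pins the class down to $\id$.
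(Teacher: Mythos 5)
Your proof is correct, but it follows a genuinely different route from the paper's. The paper's argument is much shorter and more structural: it applies $\Hom_v(-,V)$ directly to the sequence \eqref{ses:TpA-wtA-A} itself and invokes an external result (\cite[Lemma~A.10]{heuer-isoclasses}) saying that $\Ext_v(\wt A^p,V)$ vanishes because $V$ is derived $p$-complete while $[p]$ is invertible on $\wt A^p$; together with $\Hom_v(\wt A^p,V)=0$ this makes the boundary map $\Hom_v(T_pA,V)\to\Ext^1_v(A,V)$ an isomorphism. Since that boundary map \emph{is} by definition ``push out \eqref{ses:TpA-wtA-A} along $f$'', universality (existence and uniqueness of $f$) falls out immediately, with no need to compute $\Ext^1_v(A,V)$ in closed form or to identify any tautological class. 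You instead compute $\Ext^1_v(A,T)\cong\Hom_{\cts}(T_pA,T(K))$ absolutely, by rerunning \cref{rslt:BD-in-deg-1} and the Cartan--Leray argument of \cref{rslt:Ext-A-Ga-is-H1-of-O} and \cref{rslt:H1-as-Homs} with $\underline{\Z_p}$-coefficients (using $H^1_v(\wt A^p,\Z_p)=0$ from \cref{rslt:p-adic-cover-O-acyclic}), and are then forced into the extra step of showing that the class of \eqref{ses:TpA-wtA-A} is the identity in $\Hom_{\cts}(T_pA,T_pA)$ --- you correctly flag this as the delicate point, and your resolution (the underlying torsor of the extension is the tautological $T_pA$-torsor $\wt A^p\to A$, whose Cartan--Leray cocycle is $\id$) is sound; note, though, that the splitting of the pullback is automatic via the diagonal section of $\wt A^p\times_A\wt A^p\to\wt A^p$, so the Ext-vanishing on $\wt A^p$ you invoke there is not actually needed. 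The trade-off: your argument stays entirely within the paper's own toolkit and produces the explicit $\Z_p$-coefficient analogue of \cref{rslt:H1-as-Homs} as a by-product, whereas the paper's argument buys brevity and a built-in universality at the price of citing the derived $p$-completeness lemma from elsewhere. One small imprecision on your side: $\underline{\Z_p}$ is not a constant sheaf but an inverse limit of constant sheaves, so the hypothesis of \cref{rslt:BD-in-deg-1} holds because continuous maps from the connected spaces $|A^n|$ (resp.\ $|(\wt A^p)^n|$) to the profinite set $\Z_p$ are constant; this is what your connectedness appeal should say.
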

	\begin{proof}
		Let $V$ be finite free over $\underline{\Z}_p$, then we consider the long exact sequence of $\Hom(-,V)$ applied to the displayed sequence: Since $V$ is derived $p$-complete and $\wt A^p$ is $p$-divisible, we have by \cite[Lemma~A.10]{heuer-isoclasses} \[\Ext_v(\wt A^p,V)=0.\]
		Consequently, the boundary map induces an isomorphism
		\[ \Hom_v(T_pA,V)=\Ext^1_v(\wt A^p,V),\]
		which implies the desired result.
	\end{proof}
	\begin{Corollary}
		The universal $v$-vector extension can explicitly be described as the pushout of \cref{ses:TpA-wtA-A} along the natural morphism of $v$-sheaves
		\[ T_pA\to T_pA\otimes_{\Z_p} \G_a=H^1_v(A,\O)^{\ast}\otimes_K\G_a.\]
	\end{Corollary}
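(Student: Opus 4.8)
The plan is to compare two functorial descriptions of $\Ext^1_v(A,V)$ for a vector group $V$ and to match the distinguished classes on both sides. By \cref{rslt:Ext-A-Ga-is-H1-of-O} and \cref{rslt:H1-as-Homs} we have, naturally in $V$,
\[ \Ext^1_v(A, V) = H^1_v(A, V) = \Hom_{\cts}(T_pA, V(K)). \]
First I would observe that, under this identification, the pushout of the $\Z_p$-extension \cref{ses:TpA-wtA-A} along any homomorphism $f\colon T_pA \to V$ of $v$-sheaves corresponds precisely to $f$, viewed as an element of $\Hom_{\cts}(T_pA, V(K))$. Indeed, \cref{ses:TpA-wtA-A} exhibits $\wt A^p \to A$ as the tautological $T_pA$-torsor, so by the universal property of the $\Z_p$-extension its class in $\Ext^1_v(A, T_pA) = \Hom_v(T_pA, T_pA)$ is the identity; pushing the extension out along $f$ amounts to pushing this class forward along $f_*$, and since the Cartan--Leray identification is functorial in the coefficient sheaf, this pushforward is again $f$. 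Thus the corollary reduces to identifying the homomorphism $f$ for which the pushout is the universal $v$-vector extension $E_v A$.

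Next I would pin down this $f$. Setting $V := H^1_v(A, \O)^* \tensor \G_a$, the proof of \cref{rslt:universal-vector-extension-exists} shows that $E_v A$ is the class corresponding to $\id$ under
\[ \Ext^1_v(A, V) = H^1_v(A, \O) \tensor_K V(K) = \Hom_K(H^1_v(A, \O)^*, V(K)), \]
so it remains to trace $\id$ through to the description $\Hom_{\cts}(T_pA, V(K))$. The crucial point is that both chains of identifications are built from the single Cartan--Leray isomorphism $H^1_v(A, \O) = \Hom_{\cts}(T_pA, K)$ appearing in the proof of \cref{rslt:Ext-A-Ga-is-H1-of-O}; in particular the isomorphism $H^1_v(A, V) = H^1_v(A, \O) \tensor_K V(K)$ is just its $V(K)$-linear extension. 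Writing $\{e_i\}$ for a basis of $H^1_v(A, \O)$ with dual basis $\{e_i^*\}$ of $V(K) = H^1_v(A, \O)^*$, the element $\id$ is the tensor $\sum_i e_i \tensor e_i^*$, which under $H^1_v(A, \O) \tensor_K V(K) = \Hom_{\cts}(T_pA, V(K))$ becomes the map $t \mapsto \sum_i e_i(t) e_i^*$. Evaluating the resulting functional on $e_j$ gives $e_j(t)$, so this map sends $t$ to the evaluation functional $\phi \mapsto \phi(t)$ on $H^1_v(A, \O) = \Hom_{\cts}(T_pA, K)$.

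Finally, under the canonical duality $H^1_v(A, \O)^* = T_pA \tensor_{\Z_p} K$ --- which is exactly the identification producing $T_pA \tensor_{\Z_p} \G_a = H^1_v(A, \O)^* \tensor_K \G_a$ in the statement, via the pairing $(\phi, s \tensor \lambda) \mapsto \lambda\,\phi(s)$ --- evaluation at $t$ is precisely $t \tensor 1$. Hence the homomorphism $f$ is the natural inclusion $\iota\colon T_pA \to T_pA \tensor_{\Z_p} \G_a$, $t \mapsto t \tensor 1$, and combining this with the first paragraph shows that $E_v A$ is the pushout of \cref{ses:TpA-wtA-A} along $\iota$, as claimed. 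I expect the only genuine subtlety to be this last compatibility: confirming that the distinguished ``identity'' class defining the universal vector extension is carried to the tautological inclusion $\iota$ under Cartan--Leray, for which one must keep careful track of the evaluation pairing and of the duality $H^1_v(A,\O)^* = T_pA \tensor_{\Z_p} K$ throughout.
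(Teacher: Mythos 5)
Your proof is correct, and it establishes exactly the right statement, but it runs on different machinery than the paper's own (one-sentence) proof. The paper argues by applying $\Hom(-,\underline{\Z_p})$ and $\Hom(-,\O)$ to the sequence \cref{ses:TpA-wtA-A} and comparing the two long exact sequences along the natural transformation induced by $\underline{\Z_p}\hookrightarrow\O$: naturality of the connecting homomorphisms (which are isomorphisms $\Hom_v(T_pA,V)\cong\Ext^1_v(A,V)$ by the vanishing results on $\wt A^p$) identifies the pushout of the universal $\Z_p$-extension along $T_pA\to T_pA\otimes_{\Z_p}\G_a$ with the distinguished class defining $E_vA$. You never write down these long exact sequences; instead you realize the identification $\Ext^1_v(A,V)\cong\Hom_{\cts}(T_pA,V(K))$ through \cref{rslt:Ext-A-Ga-is-H1-of-O} and \cref{rslt:H1-as-Homs}, check by a descent-cocycle argument that the pushout of the tautological $T_pA$-torsor along $f$ has Cartan--Leray class $f$, and then trace the identity of $\Hom_K(H^1_v(A,\O)^*,V(K))$ through the dualities by a dual-basis computation, ending at the evaluation map $t\mapsto t\otimes 1$. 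These are two constructions of the same map --- the connecting homomorphism of \cref{ses:TpA-wtA-A} agrees with the Cartan--Leray edge map once one identifies $\Ext^1$ with $H^1$, since both send $f$ to the class of the pushout --- so the arguments are parallel rather than divergent. What your version buys: it makes explicit the two compatibilities the paper suppresses entirely (that the pushout class is $f$ itself, and that the identity class corresponds to $t\mapsto t\otimes1$ under $H^1_v(A,\O)^*\cong T_pA\otimes_{\Z_p}K$), which is where the actual content of the corollary sits, and it only invokes vanishing statements already recorded in the paper, whereas the paper's comparison implicitly also needs $\Hom_v(\wt A^p,\O)=0$ and $\Ext^1_v(\wt A^p,\O)=0$ (true, but not stated there). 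What the paper's formulation buys in exchange is brevity and a basis-free packaging of the same functoriality.
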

	\begin{proof}
		This follows from comparing long exact sequences along the natural transformation $\Hom(-,\Z_p)\to\Hom(-,\O)$.
	\end{proof}
	
	\begin{Remark}
		The analog of this story for the analytic topology is more subtle: One can show using {\cite[Proposition 8.5]{BL-Degenerating-AV}} that
		\[ \Ext^1_{\an}(A,\Z_p)=H^1_{\an}(A,\Z_p)=H^1_{\an}(A,\Z)\otimes \Z_p=\Hom(\G_m,A^\vee)\otimes \Z_p.\]
		The last group depends on the reduction type of $A$, namely its rank is precisely the torus rank of the semi-stable reduction. In terms of the Raynaud uniformisation $A=E/M$, the universal analytic $\Z_p$-extension is then given by the canonical pro-\'etale tower
		\[\varprojlim_n E/p^nM\to E/M.\]
		If $E$ is totally degenerate, e.g.\ if $A$ is a Tate curve, this is an example of a canonical tower for ordinary $A$ that we discuss in the next section.
	\end{Remark}
	\subsection{The case of ordinary reduction}
	Finally, we now briefly discuss the special case that $A$ has ordinary reduction, since the discussion of this section simplifies a great deal in this case. In particular, we explain an alternative proof of Theorem~\ref{rslt:characterize-tau-unipotent-VB-via-algebraic-reps} in the ordinary reduction case. Note that by \cite{Luetkebohmert_abeloid} the usual notion of ordinary reduction generalises to the abeloid case:
	\begin{Definition}
		We say that the abeloid variety $A$ has ordinary reduction if the abelian part of the semi-stable reduction is an ordinary abelian variety over the residue field.
	\end{Definition}
	
	Note that the abeloid variety $A$ has ordinary reduction if and only if its Hodge filtration is already rational: Namely, in this case there exists a $p$-divisible subgroup $C\subseteq A[p^\infty]$ of height $d=\dim A$ called the canonical subgroup such that the Tate module $T_pC\subseteq T_pA$ spans the Hodge filtration, i.e.\ we have a commutative diagram of the form 
	\[ \begin{tikzcd}
		T_pC\otimes_{\Z_p}K \arrow[d, "\sim"labelrotate] \arrow[r, hook] & T_pA\otimes_{\Z_p}K \arrow[d, "\sim"labelrotate] \\
		{H^0(A,\wtOm^1)^\ast} \arrow[r, hook] & {H^1_v(A,\O)^\ast.}
	\end{tikzcd}\]
	In light of this diagram, \cref{rslt:characterize-tau-unipotent-VB-via-algebraic-reps} implies the following simpler statement:
	
	\begin{Corollary}
		Let $A$ be an abeloid variety with ordinary reduction, let $\rho:T_pA\to \GL_n(K)$ be a continuous representation and let $V_{\rho}$ be the associated $v$-vector bundle from \cref{rslt:profet-vb-vs-Reps}.
		\begin{enumerate}
			\item Suppose $\rho$ is unipotent. Then $V_\rho$ is analytic if and only if $\rho$ vanishes on $T_pC\subseteq T_pA$.
			\item In general, $V_\rho$ is analytic if and only if $\rho({T_pC})$ is finite.
		\end{enumerate}
	\end{Corollary}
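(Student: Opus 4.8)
The plan is to handle the two parts separately: part (1) follows almost immediately from the analyticity criterion obtained at the end of the previous subsection, and part (2) is then bootstrapped from it using the decomposition of \cref{rslt:vb-on-A-are-unipotent-times-lb}. For part (1), I would specialise the unipotent criterion: by the discussion following \cref{rslt:characterize-tau-unipotent-VB-via-algebraic-reps}, a unipotent $V_\rho$ is analytic if and only if the $K$-linear extension $\rho\tensor K$ is trivial on the subspace $H^0(A,\wtOm^1)^\ast\subseteq H^1_v(A,\O)^\ast$. In the ordinary case the displayed diagram identifies this subspace with $T_pC\tensor_{\Z_p}K$. Passing through the matrix logarithm (\cref{rslt:matrix-log-exp}), triviality of $\rho\tensor K$ on $T_pC\tensor K$ is equivalent to vanishing of $\theta=\log\comp\rho$ on $T_pC$, which by injectivity of $\exp$ is equivalent to $\rho|_{T_pC}=1$. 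This gives (1).

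For part (2), I would first reduce to isotypic pieces. Writing $\rho$ in upper triangular form (possible since $T_pA$ is abelian and topologically finitely generated), \cref{rslt:vb-on-A-are-unipotent-times-lb} yields a decomposition $V_\rho=\bigoplus_j U_j\tensor L_{\chi_j}$ indexed by the distinct diagonal characters $\chi_j\colon T_pA\to K^\times$, where $U_j$ is the unipotent bundle attached to the unipotent part $\chi_j^{-1}\rho$ of the $\chi_j$-block. Since a direct summand of an analytic bundle is analytic by \cref{rslt:v-loc-free-implies-analytic-loc-free}, $V_\rho$ is analytic if and only if each $U_j\tensor L_{\chi_j}$ is. As $U_j$ contains $\O$ as a sub-bundle, $L_{\chi_j}$ embeds into $U_j\tensor L_{\chi_j}$, so \cref{rslt:analyticity-of-sub-LB-of-VB} forces $L_{\chi_j}$ to be analytic whenever $U_j\tensor L_{\chi_j}$ is; conversely, tensoring with the analytic line bundle $L_{\chi_j}^{-1}$ shows the product is analytic as soon as both factors are. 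Hence $V_\rho$ is analytic if and only if, for every $j$, both $U_j$ and $L_{\chi_j}$ are analytic.

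It then remains to translate these two conditions into the single statement about $\rho(T_pC)$. By part (1), $U_j$ is analytic exactly when the unipotent part of the $\chi_j$-block is trivial on $T_pC$. For the line bundle factor I would invoke the rank-one theory (\cref{rslt:Simpson-linebundles} and the $\HT\log$ sequence): $L_{\chi_j}$ is analytic if and only if it lies in the kernel of $\HT\log$, which for a character of $T_pA$ amounts to $\log\chi_j|_{T_pC}=0$. Since $T_pC\cong\Z_p^d$ is a compact pro-$p$ group, $\chi_j(T_pC)$ is a compact subgroup of $\O_K^\times$, and $\log$ annihilates it precisely when it lands in $\mu_{p^\infty}(K)$, i.e.\ precisely when $\chi_j(T_pC)$ is finite. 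Assembling, via the Jordan decomposition of the commuting family $\rho(T_pC)$: its image is finite if and only if all unipotent parts are trivial on $T_pC$ (a nontrivial unipotent has infinite order in characteristic zero) and each semisimple character $\chi_j|_{T_pC}$ has finite image; after simultaneous diagonalisation these two conditions place $\rho(T_pC)$ inside a finite diagonal group. This matches the blockwise conditions above and yields (2).

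The step I expect to be the main obstacle is this final bookkeeping: one must match the blockwise analyticity conditions with the single group-theoretic condition ``$\rho(T_pC)$ finite'', which hinges on the Jordan decomposition of the commuting abelian image together with the two facts that finite-order unipotents are trivial and that a continuous character of the pro-$p$ group $T_pC$ with values in $\ker(\log)$ automatically has finite image. The other point requiring care is ensuring that the decomposition of \cref{rslt:vb-on-A-are-unipotent-times-lb} is compatible with the chosen triangular form of $\rho$, so that the $U_j$ genuinely realise the unipotent parts of the character blocks.
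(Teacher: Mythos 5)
Your proposal is correct and follows essentially the same route as the paper: part (1) via the analyticity criterion for unipotent bundles from \cref{rslt:characterize-tau-unipotent-VB-via-algebraic-reps} together with the ordinary-reduction diagram identifying $H^0(A,\wtOm^1)^\ast$ with $T_pC\otimes K$, and part (2) by reducing through the unipotent-tensor-line-bundle decomposition to the rank-one case, where analyticity is detected by $\HT\log$, i.e.\ by $\log\chi$ vanishing on $T_pC$. The only difference is one of explicitness: the paper compresses your final ``bookkeeping'' (Jordan decomposition of the commuting image, finite-order unipotents being trivial, and compact subgroups of $\mu_{p^\infty}$ being finite) into the single remark that the relevant condition is $\rho(T_pC)\subseteq\mu_{p^\infty}=\ker(\log)$, so your write-up simply fills in details the paper leaves implicit.
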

	\begin{proof}
		The case of unipotent bundles follows from  \cref{rslt:characterize-tau-unipotent-VB-via-algebraic-reps} via the above diagram. To deduce part 2, it remains by
		\cref{rslt:decomposition-of-profet-vb-category} to treat the case of line bundles. We may thus assume that $\rho:T_pA\to K^\times$ is a character. In this case, the Corollary follows from the results of \cite{heuer-v_lb_rigid}: As recalled in \cref{sec:prelim-simpson-for-line-bundles}, the $v$-line bundle $V_{\rho}$ is analytic if and only if $\HT\log(V_{\rho})=0$. Combining the diagram \cite[\S4.4, (11)]{heuer-v_lb_rigid} with the above diagram, we see that $\HT\log(V_{\rho})$ gets identified with the image of $\rho$ under the natural map
		\[ \Hom(T_pA,K^\times)\xrightarrow{\log}\Hom(T_pA,K)\xrightarrow{\mathrm{res}} \Hom(T_pC,K)\xisomarrow{\HT}H^0(A,\wtOm^1).\]
		This vanishes if and only if $\rho(T_pC)$ lands in $\mu_{p^\infty}\subseteq K^\times$, the kernel of the logarithm.
	\end{proof}
	We now sketch how one can give an alternative proof of the unipotent part of the Corollary by interpreting it in more geometric terms:
	Consider the intermediate pro-\'etale cover
	\[ \wt A^c:=\wt A^{p}/T_pC\to A.\]
	This can be identified with the inverse limit over the ``dual-to-canonical'' isogenies
	\[\dots A/C[p^2]\to A/C[p]\to A.\]
	As usual, we see by the Cartan--Leray sequence that we have 
	$H^1_v(\wt A^c,\O)=\Hom(T_pC,K)$
	and we thus obtain a short exact sequence
	\[0\to H^1_{\an}(A,\O)\to H_v^1(A,\O)\to  H_v^1(\wt A^c,\O)\to 0.\]
	We claim that in fact, the following stronger statement holds:
	\begin{equation}\label{eq:H^1_an(wt A^c}
		H_{\an}^1(\wt A^c,\O)=0.
	\end{equation}
	We sketch a proof: As in Lemma~\ref{rslt:Ext-A-Ga-is-H1-of-O} it follows from Lemma~\ref{rslt:BD-in-deg-1} that one can reduce to showing $\Ext^1_{\an}(\wt A^c,\O)=0$.
	Via the Raynaud uniformisation $A=E/M$, one uses this to reduce to the case of good reduction: Namely, the dual-to-canonical isogenies lift uniquely to a tower over $E$. Let $\wt E^c$ be its limit. Exactly like in \cite[Theorem 4.6]{perfectoid-covers-Arizona}, one sees that in the limit the morphism between the two towers gives a short exact sequence relating $\wt A^c$ and  $\wt E^c$. This reduces us to showing $\Ext^1_{\an}(\wt E^c,\O)=0$. For this  we use that $\wt E^c$ is itself an extension by a torus of the cover $\wt B^c$ for an abelian variety $B$ of good ordinary reduction. Since $\Ext^1(\G_m,\O)=0$ as one sees from comparing $\G_m$ to any Tate curve, this reduces us to the case of good reduction.
	
	In this case, one can use the smooth formal model $\mathfrak A$ of $A$, which induces a formal model $\wt{\mathfrak A}^c$ of $\wt A^c$. By a comparison of cohomologies, it suffices to prove that $H^1(\wt{\mathfrak A}^c,\O)=0$. For this we use that the dual-to-canonical tower reduces mod $p$ to the tower of Verschiebung isogenies. We thus have
	\[ H^1(\mathfrak A,\O/p)=\varinjlim_{V}H^1((\mathfrak A/p)^{(p^n)},\O/p)=0\]
	since Verschiebung kills each cohomology group.
	
	It follows from \cref{eq:H^1_an(wt A^c} by induction that pullback along $\wt A^c\to A$ kills precisely the unipotent analytic vector bundles. This gives an independent proof that a unipotent $v$-vector bundle is analytic if and only if its associated $T_pA$-representation is trivial on $T_pC$.
	
	\begin{Remark}
		One way in which this perspective could be helpful is that in contrast to the approach via universal vector extensions, it does not use the group structure on $A$ in an essential way. In particular, we believe that diamantine covers like $\wt A^c$ can also help understand when generalised representations are representations beyond the case of abeloids.
	\end{Remark}
	
	\section{Higgs bundles on abeloids}
	
	As before, let $K$ be a complete algebraically closed field extension of $\Q_p$. Having studied pro-finite-\'etale $v$-vector bundles on an abeloid variety $A$ over $K$, we now turn our focus to the other side of the Corlette--Simpson correspondence: the Higgs bundles. It turns out that pro-finite-\'etale Higgs bundles admit a similar decomposition as pro-finite-\'etale $v$-vector bundles, as predicted by the $p$-adic Corlette--Simpson philosophy. Proving this is the main goal of this subsection.
	
	\begin{Proposition} \label{rslt:Higgs-on-A-are-unipotent-times-lb}
		Let $A$ be an abeloid variety over $K$. Then a Higgs bundle $(E, \theta)$ on $A$ is pro-finite-\'etale if and only if it can be written as
		\begin{align*}
			(E, \theta) = \bigoplus_{i=1}^n (L_i, \theta_{L_i}) \tensor (U_i, \theta_{U_i}),
		\end{align*}
		where each $(L_i, \theta_{L_i})$ is a pro-finite-\'etale Higgs line bundle and each $(U_i, \theta_{U_i})$ is a unipotent Higgs bundle on $X$.
	\end{Proposition}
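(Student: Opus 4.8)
The \emph{if} direction is immediate: if $(E,\theta)=\bigoplus_i (L_i,\theta_{L_i})\tensor(U_i,\theta_{U_i})$ as stated, then the underlying bundle $E=\bigoplus_i L_i\tensor U_i$ is pro-finite-\'etale by \cref{rslt:vb-on-A-are-unipotent-times-lb}, so $(E,\theta)$ is pro-finite-\'etale by definition. I therefore concentrate on the \emph{only if} direction. Given $(E,\theta)$ with $E$ pro-finite-\'etale (and analytic, as all Higgs bundles are), \cref{rslt:vb-on-A-are-unipotent-times-lb}(ii) writes $E=\bigoplus_i (U_i\tensor L_i)$ with each $U_i$ unipotent analytic and each $L_i$ a pro-finite-\'etale analytic line bundle; after regrouping I may assume the $L_i$ are pairwise non-isomorphic. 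The first step is to show $\theta$ respects this decomposition. Since $A$ is a group variety, $\wtOm^1$ is a free $\O$-module, so the $(i,j)$-component of $\theta$ lies in $\Hom(U_i\tensor L_i,\,U_j\tensor L_j)\tensor_K H^0(A,\wtOm^1)$, which vanishes for $i\ne j$ by the $\Hom$-vanishing established in the proof of \cref{rslt:decomposition-of-profet-vb-category} (comparing analytic and $v$-Homs via \cref{l:an-to-v-vb-fully-faithful}). Hence $\theta=\bigoplus_i\theta_i$ and $(E,\theta)=\bigoplus_i(U_i\tensor L_i,\theta_i)$, reducing me to a single block $(U\tensor L,\theta')$.

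Set $g=\dim_K H^0(A,\wtOm^1)$ and write $\theta'_1,\dots,\theta'_g\in H^0(A,\IEnd(U\tensor L))$ for the components of $\theta'$ with respect to a basis of $H^0(A,\wtOm^1)$; they commute pairwise because $\theta'\wedge\theta'=0$. Using $\IEnd(U\tensor L)=\IEnd(U)$ (as $\IEnd(L)=\O$), I regard the $\theta'_j$ as commuting endomorphisms of $U$. Translating through \cref{rslt:profet-vb-vs-Reps} and \cref{rslt:unipotent-trivial-on-wtA}, the bundle $U\tensor L$ corresponds to a representation $\rho=\chi\tensor\rho_U\colon T_pA\to\GL(W)$, where $\chi$ is the character of $L$ and $\rho_U$ is unipotent, and each $\theta'_j$ corresponds to a $T_pA$-equivariant endomorphism $\Theta_j$ of $W$; the $\Theta_j$ commute with one another and with $\rho_U$. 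As $K$ is algebraically closed, I decompose $W=\bigoplus_\lambda W_\lambda$ into joint generalised eigenspaces $W_\lambda=\bigcap_j\ker(\Theta_j-\lambda_j)^{\dim W}$ indexed by $\lambda=(\lambda_j)_j\in K^g$. These are $T_pA$-stable, hence give a decomposition of $U\tensor L$ into sub-$v$-bundles; since the projectors onto them are polynomials in the \emph{analytic} endomorphisms $\theta'_j$, each summand is a direct factor of the analytic bundle $U\tensor L$ and is therefore itself analytic. On the summand indexed by $\lambda$ I write $\Theta_j=\lambda_j+N_j$ with $N_j$ nilpotent; the $N_j$ commute pairwise and with $\rho_U$.

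On this summand I now read off the decomposition: take the Higgs line bundle $(L,\lambda)$, where $\lambda\in K^g=H^0(A,\wtOm^1)$ is a scalar Higgs field (automatically $\lambda\wedge\lambda=0$, and pro-finite-\'etale since $L$ is), together with $(U_\lambda,N_\lambda)$, where $U_\lambda$ is the analytic unipotent bundle attached to $\rho_U|_{W_\lambda}$ and $N_\lambda$ is the Higgs field with components $N_j$. Under $\IEnd(L\tensor U_\lambda)=\IEnd(U_\lambda)$ the tensor-product Higgs field $\lambda\tensor\id+\id\tensor N_\lambda$ becomes $\lambda+N_\lambda$, which is exactly $\theta'$ restricted to $W_\lambda$; thus $(L,\lambda)\tensor(U_\lambda,N_\lambda)$ recovers this summand. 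Summing over $\lambda$ and over the original index $i$ then yields the asserted decomposition, provided each $(U_\lambda,N_\lambda)$ is a unipotent Higgs bundle. For the latter, observe that the operators $\rho_U(\gamma)-1$ (for $\gamma\in T_pA$) together with the $N_j$ form a commuting family of nilpotent endomorphisms of $W_\lambda$, so they possess a common kernel vector, i.e.\ a $v$-trivial sub-Higgs-bundle $(\O,0)\injto(U_\lambda,N_\lambda)$. By \cref{rslt:analyticity-of-sub-LB-of-VB} this sub-line bundle is analytic, and running the induction of \cref{rslt:unipotent-plus-analytic-implies-analytically-unipotent} while carrying the descending Higgs field along (the analytic quotient remains locally free by \cref{rslt:v-loc-free-implies-analytic-loc-free}) exhibits $(U_\lambda,N_\lambda)$ as a successive extension of $(\O,0)$ in the category of Higgs bundles.

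The main obstacle is precisely the step that separates the \emph{scalar} part of $\theta'$ (which becomes the twist by the Higgs line bundle $(L,\lambda)$) from its \emph{nilpotent} part (which becomes the Higgs field on the unipotent factor): this relies on the joint generalised eigenspace decomposition, and hence on $K$ being algebraically closed and on the commutativity coming from $\theta'\wedge\theta'=0$. The remaining care is to check that both this eigenspace decomposition and the final filtration stay within analytic bundles; I handle this by realising the eigenspace projectors as polynomials in the analytic Higgs endomorphisms and by invoking the analyticity criteria \cref{rslt:analyticity-of-sub-LB-of-VB} and \cref{rslt:v-loc-free-implies-analytic-loc-free}.
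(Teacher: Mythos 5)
Your proof is correct, but it is organised differently from the paper's. The paper proves the statement directly by induction on the rank: after fixing $\wtOm^1_A \cong \O_A^g$ and a trivialisation $E(\wt A) = K^n$, it views $(E,\theta)$ as a representation $\rho\colon TA \to \GL_n(K)$ together with $g$ commuting matrices commuting with the image of $\rho$, picks a \emph{simultaneous} eigenvector for all of $\rho(x)$ and all $\theta_i$ at once, splits off a pro-finite-\'etale Higgs line sub-bundle $(L,\theta_L)\injto (E,\theta)$ (analytic by \cref{rslt:analyticity-of-sub-LB-of-VB}, quotient analytic by \cref{rslt:v-loc-free-implies-analytic-loc-free}), and concludes that $(E,\theta)$ is a successive extension of pro-finite-\'etale Higgs line bundles; the decomposition then follows from the $\Hom$- and $\Ext^1$-vanishing of \cref{rslt:no-Ext-of-Higgs-line-bundles}. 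You instead bootstrap from the already-proved bundle-level decomposition \cref{rslt:vb-on-A-are-unipotent-times-lb}, show $\theta$ is block-diagonal using the $\Hom$-vanishing from the proof of \cref{rslt:decomposition-of-profet-vb-category}, and then, within a block, separate the scalar part of $\theta$ (the Higgs field on $L$) from its nilpotent part (the Higgs field on the unipotent factor) via joint generalised eigenspace projectors, which you correctly observe are polynomials in the analytic endomorphisms $\theta'_j$ and hence preserve analyticity. What your route buys is that it never needs the $\Ext^1$ computation of \cref{rslt:no-Ext-of-Higgs-line-bundles} (only $\Hom$-vanishing at the level of $v$-bundles) and it produces the summands explicitly rather than by abstractly splitting a filtration; what it costs is length and more analyticity bookkeeping, whereas the paper's argument is shorter and reuses a lemma that is needed anyway for \cref{rslt:decomposition-of-profet-Higgs-category}. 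One cosmetic slip: the character $\chi$ of $L$ is a character of the full group $TA=\pi_1(A,0)$, not of $T_pA$ (only the unipotent part $\rho_U$ factors through $T_pA$ by \cref{rslt:unipotent-trivial-on-wtA}); since $\chi$ is scalar this does not affect any of your stability or commutation arguments, but the domains should read $TA$.
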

	
	As in the proof of the analogous result \cref{rslt:vb-on-A-are-unipotent-times-lb}, \cref{rslt:Higgs-on-A-are-unipotent-times-lb} relies on the following characterization of extensions of Higgs line bundles:
	
	\begin{Lemma} \label{rslt:no-Ext-of-Higgs-line-bundles}
		Let $(L, \theta) \not\cong (L', \theta')$ be two pro-finite-\'etale Higgs line bundles on an abeloid $A$ over $K$. Then
		\begin{align*}
			\Hom((L, \theta), (L', \theta')) &=0,\\
			\Ext^1((L, \theta), (L', \theta')) &=0.
		\end{align*}
		Here $\Ext^1$ denotes the set of isomorphism classes of extensions of Higgs bundles on $A$.
	\end{Lemma}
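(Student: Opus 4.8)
The plan is to reduce the statement to a computation on the underlying line bundles together with their scalar Higgs fields, exploiting that the cotangent bundle of the group variety $A$ is trivial. Since $\IEnd(L)=\O$ for a line bundle, a Higgs field on $L$ is simply an element $\theta\in H^0(A,\wtOm^1)$, the Higgs condition being automatic; write $\Lambda:=H^0(A,\wtOm^1)$. Set $M:=\HOM(L,L')=L^{-1}\tensor L'$, a pro-finite-\'etale line bundle, and $\eta:=\theta'-\theta\in\Lambda$. The hypothesis $(L,\theta)\not\cong(L',\theta')$ splits into two cases: either $M\not\cong\O$ (i.e.\ $L\not\cong L'$), or $M\cong\O$ and $\eta\neq0$. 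Because $A$ is a group variety, $\Omega^1_A$ and hence $\wtOm^1\cong\O\tensor_K\Lambda$ is trivial, so for any line bundle $N$ one has $H^q_{\an}(A,N\tensor\wedge^p\wtOm^1)=H^q_{\an}(A,N)\tensor_K\wedge^p\Lambda$, with the Higgs differential acting as $\id\tensor(\eta\wedge-)$.

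For the vanishing of $\Hom$, a morphism of Higgs bundles is a map $\varphi\in H^0(A,M)$ with $\eta\cdot\varphi=0$ in $H^0(A,M\tensor\wtOm^1)$. If $M\not\cong\O$ then already $H^0_{\an}(A,M)=H^0_v(A,M)=0$ by \cref{rslt:H^1_v(A-L)}; if $M\cong\O$ then $\varphi\in K$ and $\eta\neq0$ forces $\varphi=0$. To treat $\Ext^1$ I would analyse the forgetful map $\Ext^1((L,\theta),(L',\theta'))\to\Ext^1_{\an}(L,L')=H^1_{\an}(A,M)$ — equivalently, compute the hypercohomology of the Higgs (Dolbeault) complex $[M\xrightarrow{\eta\wedge}M\tensor\wtOm^1\xrightarrow{\eta\wedge}M\tensor\wedge^2\wtOm^1\to\cdots]$. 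The image of this map lies in $\ker\big(\eta\wedge\colon H^1_{\an}(A,M)\to H^1_{\an}(A,M)\tensor_K\Lambda\big)$, while its kernel (the Higgs extensions whose underlying bundle splits) is the quotient $\{\phi\in H^0(A,M)\tensor_K\Lambda : \eta\wedge\phi=0\}/\eta\cdot H^0(A,M)$.

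It then remains to see that both groups vanish in each case. For $M\not\cong\O$ I would use, besides $H^0_{\an}(A,M)=0$, the analytic analog $H^1_{\an}(A,M)=0$ of \cref{rslt:H^1_v(A-L)}: this follows from $H^1_{\an}(A,M)=\Ext^1_{\an}(\O,M)\hookrightarrow\Ext^1_v(\O,M)=H^1_v(A,M)=0$, where the injection comes from full faithfulness of the analytification functor (\cref{l:an-to-v-vb-fully-faithful}). Then both the kernel and the image of the forgetful map are built from groups that already vanish. For $M\cong\O$ with $\eta\neq0$, the key input is that $\eta\neq0$ makes $\eta\wedge-$ the differential of an exact Koszul complex on $\wedge^\bullet\Lambda$: concretely $\eta\wedge\colon H^1_{\an}(A,\O)\to H^1_{\an}(A,\O)\tensor_K\Lambda,\ x\mapsto x\tensor\eta$ is injective, so the image vanishes, and $\{\phi\in\Lambda:\eta\wedge\phi=0\}=K\eta=\eta\cdot H^0(A,\O)$, so the kernel vanishes. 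Hence $\Ext^1=0$ in both cases.

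The main obstacle I anticipate is twofold. First, setting up the Higgs-extension bookkeeping correctly — identifying the obstruction to promoting an underlying bundle extension of class $e$ to a Higgs extension as the class $\eta\wedge e$, and pinning down the gauge action in the split case — which is cleanest to package as the hypercohomology spectral sequence of the Higgs complex, whose entire $E_2$-page is seen to vanish from the Koszul exactness for $\eta\neq0$. Second, upgrading the $v$-cohomological vanishing \cref{rslt:H^1_v(A-L)} to its analytic counterpart $H^1_{\an}(A,M)=0$, for which the full faithfulness of \cref{l:an-to-v-vb-fully-faithful} is precisely the tool needed. Everything else is linear algebra over $K$, enabled by the triviality of $\wtOm^1$ on the group variety $A$.
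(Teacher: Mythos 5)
Your proof is correct, but in the essential case it takes a genuinely different route from the paper. For $L \not\cong L'$ both arguments rest on \cref{rslt:H^1_v(A-L)}; you make explicit a point the paper leaves implicit, namely that the $v$-topological vanishing controls analytic extensions because $\Ext^1_{\an}(L,L') \hookrightarrow \Ext^1_v(L,L')$ by full faithfulness (\cref{l:an-to-v-vb-fully-faithful}). The divergence is in the case $M := L^{-1}\tensor L' \cong \O$, $\eta = \theta'-\theta \neq 0$: the paper reduces to $(\O,0)$ and $(\O,\theta)$ and passes to the representation side via \cref{rslt:profet-vb-vs-Reps}, so that an extension $(E,\Theta)$ becomes a unipotent representation $\rho\colon TA \to U_2(K)$ together with commuting matrices $\Theta_i$ commuting with $\rho$, which it then kills by explicit matrix manipulation (the relations $b_i\theta_j = b_j\theta_i$, gauging $b_1$ to $0$, concluding that $\rho$ is trivial); you instead stay entirely on the analytic side and run obstruction theory for the Koszul/Dolbeault complex $M \xrightarrow{\eta} M\tensor\wtOm^1 \xrightarrow{\eta\wedge} M\tensor\wedge^2\wtOm^1 \to \cdots$, using that the obstruction $\eta\cdot e$ to lifting a bundle-extension class $e \in H^1_{\an}(A,\O)$ to a Higgs extension depends injectively on $e$ when $\eta\neq 0$, so the underlying bundle splits, and that in the split case $\{\phi \in H^0(A,\wtOm^1) : \eta\wedge\phi = 0\} = K\eta = \eta\cdot H^0(A,\O)$, so the gauge action absorbs everything. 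Both are sound. The paper's argument is shorter because the dictionary of \cref{rslt:profet-vb-vs-Reps} is already in place and treats the (a priori non-split) underlying bundle and the Higgs field in a single computation; your argument needs no representation theory at all, computes $\Hom$ and $\Ext^1$ (indeed the whole hypercohomology) uniformly from the two inputs ``$H^i_{\an}(A,M)=0$ for $M \not\cong \O$'' and ``Koszul exactness for $\eta \neq 0$'', and thereby makes transparent exactly where each vanishing is used --- at the price of actually carrying out the \v{C}ech bookkeeping identifying Higgs extensions with classes in $\mathbb{H}^1$ of that complex, which is routine here (extensions of analytic vector bundles split locally over affinoids, and the cocycle and gauge computations are the standard ones you indicate) but still needs to be written out.
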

	\begin{proof}
		By  Lemma~\ref{rslt:H^1_v(A-L)}, if $L \ne L'$ then $\Hom(L,L')=0$ and  there are then no non-trivial extensions of $L$ by $L'$. In particular, any Higgs field on $L\oplus L'$ decomposes. Thus also $\Ext^1=0$. We can thus reduce to the case $L = L'$. Tensoring with $(L'^{-1}, -\theta')$ we are reduced to the case $L = L' = \O$ and $\theta' = 0$. The first statement then follows since $\Hom(\O,\O)=H^0(A,\O)=K$, and in particular any non-trivial morphism $\O\to \O$ is an isomorphism, which implies $\theta=\theta'$. 
		
		To prove the second statement, we consider an extension
		\begin{align*}
			0 \to (\O, 0) \to (E, \Theta) \to (\O, \theta) \to 0.
		\end{align*}
		Fix an isomorphism $\wt\Omega^1_A \cong \O_A^g$. Then
		\begin{align*}
			H^0(A, \IEnd(E) \tensor \wt\Omega^1) \cong H^0(A, \IEnd(E))^g = \End(E)^g,
		\end{align*}
		so the Higgs field $\Theta$ can be viewed as a collection of $g$ endomorphisms $\Theta_1, \dots, \Theta_g$ of $E$. The condition $\Theta \wedge \Theta = 0$ translates to the condition that the $\Theta_i$ commute pairwise. 
		
		Fix now moreover a splitting of $E$ over $\wt A$, so that $E(\wt A)=K^2$. Then via \cref{rslt:profet-vb-vs-Reps}, the vector bundle $E$ is associated to a representation \[\rho\colon TA = \pi_1(A, 0) \to U_2(K)=\left\{\smallmat{1}{\ast}{0}{1}\right\}\cong K,\] where $U_2(K)$ denotes the group of upper triangular unipotent $2 \times 2$-matrices over $K$. Then $\End(E)$ corresponds precisely to the endomorphisms of $K^2$ (i.e.\ $2 \times 2$-matrices) which commute with all $\rho(x)$ for $x \in TA$. Similarly, the Higgs field $\theta$ on $\O$ corresponds to $g$ elements $\theta_1, \dots, \theta_g \in K$. The fact that $(E, \Theta)$ is an extension of $(\O, \theta)$ by $(\O, 0)$ forces
		\begin{align*}
			\Theta_i = \begin{pmatrix} 0 & b_i\\ 0 & \theta_i \end{pmatrix}
		\end{align*}
		for $i = 1, \dots, g$ and some $b_i \in K$. The condition that these commute is then equivalent to
		\[b_i\theta_j=b_j\theta_i\quad\text{ for all }1\leq i,j\leq n.\]
		Assume that  $\theta\ne 0$, i.e.\ after reordering that $\theta_1\neq 0$. By conjugating everything by $\left(\begin{smallmatrix} 1 & b_1/\theta_1\\ 0 & 1 \end{smallmatrix}\right)$, which is an automorphism of  $(E, \Theta)$ as an extension, we can arrange that $b_1 = 0$. But then setting $i=1$ in the above commutativity condition implies $b_j=0$ for all $j$. 
		
		Similarly, a matrix $\smallmat{1}{b}{0}{1}$ commutes with $\smallmat{0}{0}{0}{\theta_1}$ if and only if $b=0$, so the condition that the representation $\rho\colon TA \to U_2(K)$ commutes with $\Theta_1$ forces $\rho$ to be trivial. Thus indeed, $(E, \Theta)$ is a trivial extension.
	\end{proof}
	
	\begin{proof}[Proof of \cref{rslt:Higgs-on-A-are-unipotent-times-lb}]
		Only the ``only if'' direction needs proof, so assume that $E$ is pro-finite-\'etale. As in the proof of \cref{rslt:no-Ext-of-Higgs-line-bundles} we can fix an isomorphism $\wt\Omega^1_A \cong \O_A^g$ and a basis $E(\wt A)=K^n$ which allows us to view $(E, \theta)$ equivalently as a representation $\rho\colon TA \to \GL_n(K)$ plus $g$ matrices $\theta_1, \dots, \theta_g\in M_n(K)$ such that all $\theta_i$ commute with all matrices in the image of $\rho$ and the $\theta_i$'s commute pairwise. Thus there is a simultaneous eigenvector $w \in K^n$ for the collection of all $\rho(x)$ and all $\theta_i$. Being an eigenvector for the $\rho(x)$ means that $w$ corresponds to a pro-finite-\'etale $v$-line bundle $L \subset E$, which by \cref{rslt:analyticity-of-sub-LB-of-VB} is automatically analytic. Being an eigenvector of the endomorphisms $\theta_1, \dots, \theta_g$ of $K^n$ means that $\theta$ preserves $L$ and hence defines a Higgs field $\theta_L$ on $L$ such that $(L, \theta_L) \injto (E, \theta)$ is a morphism of Higgs bundles. The quotient $E/L$ is again a pro-finite-\'etale vector bundle by \cref{rslt:profet-vb-vs-Reps}, and it is analytic by \cref{rslt:v-loc-free-implies-analytic-loc-free}. Inductively, we deduce that $(E, \theta)$ is a successive extension of pro-finite-\'etale Higgs line bundles. Thus the claim follows from \cref{rslt:no-Ext-of-Higgs-line-bundles}.
	\end{proof}
	
	We can reformulate \cref{rslt:Higgs-on-A-are-unipotent-times-lb} in the following form, analogously to \cref{rslt:decomposition-of-profet-vb-category}:
	
	\begin{Corollary} \label{rslt:decomposition-of-profet-Higgs-category}
		Let $A$ be an abeloid variety over $K$ and let $\Higgs^{\pf}_1(A)$ denote the set of isomorphism classes of pro-finite-\'etale Higgs line bundles on $A$. Then there is a natural equivalence of categories
		\begin{align*}
			\{ \text{pro-finite-\'etale Higgs bundles on $A$} \} \ &\cong \ \bigoplus_{L \in \Higgs^\pf_1(A)} \{ \text{unipotent Higgs bundles on A} \},\\
			\bigoplus_{[L]} L \tensor U_{[L]} \ & \mapsfrom \ (U_{[L]})_{[L]}.
		\end{align*}
		Here we can associate to every isomorphism class in $\Higgs^{\pf}_1(A)$ a canonical representative $L$ which is determined by saying that its underlying line bundle is that from \cref{rslt:decomposition-of-profet-vb-category}.
	\end{Corollary}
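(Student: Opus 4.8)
The plan is to mirror the proof of \cref{rslt:decomposition-of-profet-vb-category} essentially verbatim, now carried out inside the category of Higgs bundles. Recall that morphisms in the target $\bigoplus_{[L]} \{\text{unipotent Higgs bundles}\}$ are diagonal tuples (finitely supported families with component-wise morphisms), and the functor $(U_{[L]})_{[L]} \mapsto \bigoplus_{[L]} L \tensor U_{[L]}$ sends such a tuple to a block-diagonal morphism of Higgs bundles. Thus it suffices to establish essential surjectivity and full faithfulness. Essential surjectivity is immediate from \cref{rslt:Higgs-on-A-are-unipotent-times-lb}: a pro-finite-\'etale Higgs bundle $(E,\theta)$ decomposes as $\bigoplus_i (L_i,\theta_{L_i}) \tensor (U_i,\theta_{U_i})$ with finitely many Higgs line bundles $(L_i,\theta_{L_i})$; each is isomorphic to the canonical representative $L$ of its class $[L_i]\in\Higgs^\pf_1(A)$, and collecting the indices with a fixed class $[L]$ and setting $U_{[L]}:=\bigoplus_{[L_i]=[L]}(U_i,\theta_{U_i})$ (again unipotent) yields $(E,\theta)\cong\bigoplus_{[L]} L\tensor U_{[L]}$.

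For full faithfulness I would treat the diagonal and off-diagonal parts separately. On a single component, tensoring with the inverse Higgs line bundle $L^{-1}=(\mathcal L^{-1},-\theta_L)$ is an autoequivalence of Higgs bundles with $L\tensor L^{-1}=(\O,0)$, so $\Hom_{\mathrm{Higgs}}(L\tensor U,\,L\tensor U')=\Hom_{\mathrm{Higgs}}(U,U')$ and the functor is an equivalence onto each summand. The remaining, and crucial, point is the off-diagonal vanishing: for distinct classes $[L]\neq[L']$ and unipotent Higgs bundles $U,U'$ one has $\Hom_{\mathrm{Higgs}}(L\tensor U,\,L'\tensor U')=0$. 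This forces any morphism between images of the functor to be block-diagonal, hence to arise from a unique morphism of tuples.

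To prove this \emph{key vanishing}, I would tensor with $L^{-1}$ to reduce to $\Hom_{\mathrm{Higgs}}(U,\,L''\tensor U')=0$, where $L'':=L'\tensor L^{-1}$ is a \emph{nontrivial} pro-finite-\'etale Higgs line bundle (nontrivial precisely because $[L]\neq[L']$). I would then induct on the unipotent length of $U$: writing $U$ as an extension $(\O,0)\to U\to U'''$, a morphism $f\colon U\to L''\tensor U'$ restricts to $(\O,0)\hookrightarrow U\xrightarrow{f} L''\tensor U'$. Since $U'$ is a successive extension of $(\O,0)$, the target $L''\tensor U'$ is a successive extension of copies of $L''$; as $(\O,0)\not\cong L''$, \cref{rslt:no-Ext-of-Higgs-line-bundles} supplies both $\Hom_{\mathrm{Higgs}}((\O,0),L'')=0$ and $\Ext^1_{\mathrm{Higgs}}((\O,0),L'')=0$, so the long exact sequence of $\Hom_{\mathrm{Higgs}}((\O,0),-)$ gives $\Hom_{\mathrm{Higgs}}((\O,0),\,L''\tensor U')=0$ by a second induction on the length of $U'$. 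Hence the restriction of $f$ vanishes, $f$ factors through $U'''$, and the outer induction forces $f=0$.

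The main obstacle is exactly this off-diagonal vanishing: everything hinges on having \emph{both} $\Hom=0$ and $\Ext^1=0$ for distinct pro-finite-\'etale Higgs line bundles, which is the content of \cref{rslt:no-Ext-of-Higgs-line-bundles}, and on the bookkeeping that feeds the unipotent filtrations of $U$ and $U'$ into the two nested inductions. Once this is in place, combining it with the diagonal equivalence gives full faithfulness, and together with essential surjectivity the claimed equivalence of categories follows.
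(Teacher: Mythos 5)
Your proposal is correct and takes essentially the same approach as the paper: essential surjectivity via \cref{rslt:Higgs-on-A-are-unipotent-times-lb}, and full faithfulness by transplanting the double-induction argument from the proof of \cref{rslt:decomposition-of-profet-vb-category} into the category of Higgs bundles, with the off-diagonal vanishing supplied by \cref{rslt:no-Ext-of-Higgs-line-bundles}. One harmless inefficiency: for the off-diagonal $\Hom$-vanishing you only need the $\Hom = 0$ part of \cref{rslt:no-Ext-of-Higgs-line-bundles} together with left-exactness of $\Hom_{\mathrm{Higgs}}((\O,0),-)$ on short exact sequences; the $\Ext^1 = 0$ part is not needed here, as it has already been consumed by \cref{rslt:Higgs-on-A-are-unipotent-times-lb} to produce the direct-sum decomposition you cite for essential surjectivity.
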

	\begin{proof}
		By \cref{rslt:Higgs-on-A-are-unipotent-times-lb} the functor from right to left is essentially surjective. To get fully faithfulness, by the same arguments as in the proof of \cref{rslt:decomposition-of-profet-vb-category} it is enough to note that by \cref{rslt:no-Ext-of-Higgs-line-bundles}, for any two pro-finite-\'etale Higgs line bundles $(L_1, \theta_1) \not\cong (L_2, \theta_2)$ we have $\Hom((L_1, \theta_1), (L_2, \theta_2)) = 0$. 
	\end{proof}
	
	\begin{Remark}\label{remark:nfr-vs-profet}
		If $A$ is an abeloid of good reduction, then by \cite[Theorem~6.1]{LangerFundamental} there is an analogous description of the category of analytic vector bundles with numerically flat reduction on $A$, and thus of Higgs bundles with numerically flat reduction: Namely, the condition that $L$ is topological torsion gets replaced by the condition that $L\in \Pic^0(A)$.
		
		Over the base field $K=\C_p$, we have $\Pic^{\tt}(A)=\Pic^0(A)$ by \cite[Lemma~B.5]{heuer-diamantine-Picard}. Therefore, in this case, the pro-finite-\'etale Higgs bundles are precisely the ones with numerically flat reduction, in line with \cite[Theorem 1.2]{wuerthen_vb_on_rigid_var}. However, over more general base fields, the two notions are different in the case of line bundles: In general, one has $\Pic^\tt\subseteq \Pic^0$, so being pro-finite-\'etale is stronger than having numerically flat reduction.
	\end{Remark}
	\section{The $p$-adic Corlette--Simpson correspondence}
	
	We now have everything in place to prove our main result,  the $p$-adic Corlette--Simpson correspondence for abeloid varieties. As before, let $K$ be a complete algebraically closed field extension of $\Q_p$. Recall that for any smooth proper rigid space $X$ over $K$, its associated Hodge--Tate spectral sequence degenerates \cite[Theorem~13.3]{BMS}, and hence induces a short exact sequence (cf. \cref{sec:prelim-higgs-bundles})
	\begin{align*}
		0 \to H^1_\an(X, \O) \to H^1_v(X, \O) \xto{\HT} H^0(X, \wt\Omega^1) \to 0.
	\end{align*}
	The occurring map $\HT$ is called the \emph{Hodge--Tate map} of $X$.
	
	\begin{Theorem} \label{rslt:Simpson-correspondence}
		Let $K$ be a complete algebraically closed field extension of $\mathbb{Q}_p$ and let $A$ be an abeloid variety over $K$. Then any choice of an exponential on $K$ and a splitting of the Hodge-Tate map $\HT$ of $A$ induce an exact tensor equivalence of categories
		\begin{align*}
			\mathrm{Rep}_{K}(\pi_1(A,0)) \ \cong \ \{ \text{pro-finite-\'etale Higgs bundles on $A$} \}.
		\end{align*}
	\end{Theorem}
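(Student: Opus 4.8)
The plan is to assemble the equivalence from the structural results already established, exploiting that both sides carry parallel direct-sum decompositions into a line-bundle part and a unipotent part. First I would use \cref{rslt:profet-vb-vs-Reps} to identify $\mathrm{Rep}_{K}(\pi_1(A,0))$, exactly and compatibly with tensor products, with the category of pro-finite-\'etale $v$-vector bundles on $A$. It then suffices to produce an exact tensor equivalence
\[
\{\text{pro-finite-\'etale $v$-vector bundles on $A$}\} \ \cong \ \{\text{pro-finite-\'etale Higgs bundles on $A$}\}.
\]
Both sides decompose: \cref{rslt:decomposition-of-profet-vb-category} writes the left-hand side as $\bigoplus_{[L]\in\Pic^\pf_v(A)}\{\text{unipotent $v$-bundles on $A$}\}$ and \cref{rslt:decomposition-of-profet-Higgs-category} writes the right-hand side as $\bigoplus_{[L]\in\Higgs^\pf_1(A)}\{\text{unipotent Higgs bundles on $A$}\}$, each summand coming with a canonical representative line bundle. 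Since in each decomposition the Hom-groups between distinct summands vanish, an equivalence of the two amounts to two independent tasks: a bijection of the index sets and an equivalence of the (constant) summand categories.

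For the index sets I would invoke the rank-one correspondence \cref{rslt:Simpson-linebundles}, which, given the chosen exponential and Hodge--Tate splitting, is a tensor equivalence between $v$-line bundles and Higgs line bundles; restricting to pro-finite-\'etale objects identifies $\Pic^\pf_v(A)=\Hom_\cts(TA,K^\times)$ with $\Higgs^\pf_1(A)$ as groups. This is the only place the exponential enters. For the summand categories I would establish an equivalence $\{\text{unipotent $v$-vector bundles on $A$}\}\cong\{\text{unipotent Higgs bundles on $A$}\}$ using Brion's theorem \cref{rslt:characterize-tau-unipotent-VB-via-algebraic-reps} in both topologies: the left-hand side is the category of algebraic representations of $H^1_v(A,\O)^*\tensor\G_a$, while unipotent \emph{analytic} bundles are the representations of $H^1_\an(A,\O)^*\tensor\G_a$. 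The chosen splitting of the Hodge--Tate sequence yields
\[
H^1_v(A,\O)^* \cong H^1_\an(A,\O)^* \oplus H^0(A,\wt\Omega^1)^*,
\]
hence a product decomposition of the associated vector group. Since a representation of a product of vector groups is a pair of commuting representations of the factors, a representation of $H^1_v(A,\O)^*\tensor\G_a$ amounts to a unipotent analytic vector bundle $E$ (the first factor) together with a commuting representation of $H^0(A,\wt\Omega^1)^*\tensor\G_a$. By \cref{rslt:matrix-log-exp} and \cref{rslt:algebraic-reps-of-vector-groups-equiv-unip-reps-of-Z-p-mod}, the latter is a $K$-linear map $H^0(A,\wt\Omega^1)^*\to\End(E)$ landing in pairwise commuting nilpotent endomorphisms of the bundle $E$; as $\Omega^1_A$, and hence $\wt\Omega^1$, is trivial on the group variety $A$, this datum is precisely an element $\theta\in\End(E)\tensor H^0(A,\wt\Omega^1)=H^0(A,\IEnd(E)\tensor\wt\Omega^1)$ with $\theta\wedge\theta=0$, i.e.\ a nilpotent Higgs field making $(E,\theta)$ a unipotent Higgs bundle. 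This assignment is functorial and invertible, and uses only the splitting, not the exponential.

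Combining the group isomorphism on index sets with the equivalence of summands produces the functor $(U_{[L]})_{[L]}\mapsto\bigoplus_L L\tensor U_L$ on both sides, which is an equivalence because each constituent is. Exactness is measured on the underlying ($v$-, resp.\ analytic) vector bundles, where it follows from the exactness of \cref{rslt:profet-vb-vs-Reps}, of \cref{rslt:Simpson-linebundles}, and of the Brion correspondence. The step requiring the most care is the verification that the result is a \emph{tensor} equivalence: one must check that the line-bundle matching respects the canonical representatives fixed in \cref{rslt:decomposition-of-profet-vb-category} and \cref{rslt:decomposition-of-profet-Higgs-category} (so that the possible change of underlying bundle under \cref{rslt:Simpson-linebundles} is accounted for), and that under the unipotent equivalence the tensor product of Higgs bundles, with its field $\theta\tensor\id+\id\tensor\theta'$, matches the tensor product of representations of the abelian vector group. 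I expect this bookkeeping, together with the precise identification of the commuting $H^0(A,\wt\Omega^1)^*\tensor\G_a$-action with a Higgs field and the correct handling of the Tate twist, to be the main technical obstacle, whereas the overall architecture is dictated by the two decomposition results.
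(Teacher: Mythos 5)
Your proposal is correct and follows essentially the same route as the paper: reduce via \cref{rslt:profet-vb-vs-Reps}, decompose both sides using \cref{rslt:decomposition-of-profet-vb-category} and \cref{rslt:decomposition-of-profet-Higgs-category}, match the line-bundle index sets with \cref{rslt:Simpson-linebundles}, and treat the unipotent summands exactly as the paper does in \cref{rslt:Simpson-correspondence-for-unipotent} (Brion's correspondence \cref{rslt:characterize-tau-unipotent-VB-via-algebraic-reps} in both topologies, the Hodge--Tate splitting of the vector group, and the $\log/\exp$ dictionary between commuting nilpotent endomorphisms and Higgs fields). The only cosmetic difference is that you re-derive the unipotent equivalence inline rather than quoting it as a separate proposition, and you flag the tensor-compatibility bookkeeping explicitly, which the paper leaves implicit.
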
	By \cref{rslt:profet-vb-vs-Reps}, the left hand side is equivalent to pro-finite-\'etale $v$-vector bundles.
	As reviewed in \cref{rslt:Simpson-linebundles}, the case of line bundles is known. By our decomposition results of both sides, \cref{rslt:decomposition-of-profet-vb-category} and \cref{rslt:decomposition-of-profet-Higgs-category}, we are left to treat the unipotent case:
	
	\begin{Proposition} \label{rslt:Simpson-correspondence-for-unipotent}
		The choice of a splitting of the Hodge-Tate map $\HT$ of $A$ induces an equivalence of categories
		\begin{align*}
			\{ \text{unipotent $v$-bundles on $A$} \} \ \cong \ \{ \text{unipotent Higgs bundles on $A$} \}.
		\end{align*}
	\end{Proposition}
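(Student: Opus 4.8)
The plan is to combine the two instances ($\tau=\an$ and $\tau=v$) of the Brion-type equivalence \cref{rslt:characterize-tau-unipotent-VB-via-algebraic-reps} with the chosen splitting of the Hodge--Tate sequence. Dualising the splitting yields a direct product decomposition of vector groups
\[
H^1_v(A,\O)^* \tensor \G_a \;=\; \big(H^1_\an(A,\O)^* \tensor \G_a\big) \times \big(H^0(A,\wtOm^1)^* \tensor \G_a\big).
\]
Since the two factors are commutative while the target $\GL(W)$ is not, a homomorphism out of this product is the same datum as a pair of homomorphisms out of the factors whose images commute. Hence, by \cref{rslt:characterize-tau-unipotent-VB-via-algebraic-reps} for $\tau=v$, a unipotent $v$-bundle on $A$ is the same as an algebraic representation $\sigma$ of $H^1_\an(A,\O)^* \tensor \G_a$ together with a commuting algebraic representation $\tau'$ of $H^0(A,\wtOm^1)^* \tensor \G_a$. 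By the same theorem for $\tau=\an$, the factor $\sigma$ corresponds precisely to a unipotent \emph{analytic} vector bundle $E$, where $\End(E)$ is identified with the endomorphisms of the underlying space $W$ that commute with $\sigma$. The entire content of the proposition is then to identify the remaining datum -- the commuting representation $\tau'$ -- with a Higgs field on $E$.

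To carry this out I would apply the matrix logarithm of \cref{rslt:matrix-log-exp} to $\tau'$. Being a representation of a vector group, $\tau'$ is unipotent by \cref{rslt:algebraic-reps-of-vector-groups-equiv-unip-reps-of-Z-p-mod}, so $\log\tau'$ is a $K$-linear map $H^0(A,\wtOm^1)^* \to \End(W)$ landing in pairwise commuting nilpotent endomorphisms, i.e.\ an element $\theta \in H^0(A,\wtOm^1)\tensor_K \End(W)$. The hypothesis that $\tau'$ commutes with $\sigma$ says, via \cref{rslt:matrix-log-exp}.(iii), exactly that $\log\tau'$ takes values in $\End(E)\subseteq\End(W)$, so $\theta$ lands in $H^0(A,\wtOm^1)\tensor_K \End(E)$. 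Since $A$ is a group variety, $\wtOm^1_A\cong\O_A^g$ is free, whence $H^0(A,\IEnd(E)\tensor\wtOm^1)=\End(E)\tensor_K H^0(A,\wtOm^1)$ and $\theta$ is precisely a candidate Higgs field $E\to E\tensor\wtOm^1$. Writing $\theta=\sum_j \omega_j\tensor\theta_j$ in a basis $(\omega_j)$ of $H^0(A,\wtOm^1)$, one computes $\theta\wedge\theta=\sum_{i<j}(\omega_i\wedge\omega_j)\tensor[\theta_i,\theta_j]$, so the integrability condition $\theta\wedge\theta=0$ is equivalent to the $\theta_j$ commuting pairwise -- which is exactly the condition that $\log\tau'$ define a representation. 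Thus $(E,\theta)$ is a Higgs bundle, and conversely every such $\theta$ arises by applying $\exp$. Finally, since $\sigma$ and $\tau'$ are unipotent and commute, the total representation can be simultaneously upper-triangularised, and the resulting common filtration exhibits $(E,\theta)$ as a successive extension of $(\O,0)$; hence it is genuinely a \emph{unipotent} Higgs bundle, while conversely the automatic nilpotency of $\theta$ in a unipotent Higgs bundle lets one run the construction backwards.

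The main obstacle I anticipate is not a single hard estimate but the bookkeeping in the middle step: one must verify that the decomposition $H^1_v(A,\O)^*\cong H^1_\an(A,\O)^*\oplus H^0(A,\wtOm^1)^*$ matches the $v$-bundle structure coming from $\tau=v$ with the analytic structure coming from $\tau=\an$ \emph{on the nose} -- that is, that the analytic bundle produced by $\sigma$ is the analytic refinement of the $v$-bundle $E$, not merely an analytic bundle abstractly carrying the same fibre representation. This is what makes the assignment functorial in morphisms and a genuine quasi-inverse in both directions. It should follow from the commutativity of the large diagram of functors assembled at the end of the previous section, together with \cref{rslt:unipotent-plus-analytic-implies-analytically-unipotent}, but it is the point at which the two Brion equivalences must be shown to be compatible. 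Once this is secured, exactness and compatibility with tensor products are formal, since all the functors are additive and the construction of $\theta$ is visibly compatible with the tensor-product formula $\theta\tensor\id+\id\tensor\theta'$ for Higgs fields.
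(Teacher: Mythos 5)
Your proposal is correct and follows essentially the same route as the paper's proof: the Brion-type equivalence \cref{rslt:characterize-tau-unipotent-VB-via-algebraic-reps} for $\tau=v$, the splitting of $\HT$ to split off a commuting representation of $H^0(A,\wtOm^1)^*\tensor\G_a$, the matrix logarithm of \cref{rslt:matrix-log-exp} to turn that factor into a Higgs field on the unipotent analytic bundle given by the $\tau=\an$ case, with freeness of $\wtOm^1_A$ identifying commutativity with $\theta\wedge\theta=0$ and nilpotency with unipotency. The compatibility issue you flag at the end is not an obstruction: the paper simply defines the equivalence as the composite of these steps (so no ``on the nose'' matching is required), and the consistency of the two Brion equivalences is exactly what the commutative diagram at the end of the preceding section records.
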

	\begin{proof}
		By \cref{rslt:characterize-tau-unipotent-VB-via-algebraic-reps}, unipotent $v$-bundles on $A$ are equivalent to algebraic representations
		\[ H^1_v(A, \O)^*\to \underline{\Aut}(W)\]
		on finite dimensional $K$-vector spaces. Here and in the following, we simply write $H^1_v(A, \O)^*$ for the algebraic group $H^1_v(A, \O)^*\otimes \G_a$ to ease notation. We proceed similarly for the other terms in the Hodge-Tate sequence. This abuse of notation is harmless due to the category equivalence in  \cref{rslt:algebraic-reps-of-vector-groups-equiv-unip-reps-of-Z-p-mod}.

		We now extend this to a chain of equivalences of categories as follows:
		\[\begin{tikzcd}[ampersand replacement=\&,row sep = 0.55cm]
			{\left\{ \text{unipotent $v$-vector bundles on $A$}\right\}} \arrow[d,"(a)"] \\
			{\left\{(W,\rho)\middle| \begin{array}{@{}l@{}l}W &:\text{finite dimensional $K$-vector space,}\\
					\rho &:\text{$H^1_v(A, \O)^*\to \underline{\Aut}(W)$ algebraic representation}
				\end{array}\right\}}  \arrow[d, dashed,"(b)"] \\
			{\left\{(W,\rho_1,\rho_2)\middle| \begin{array}{@{}l@{}l @{\:}l}W &:&\text{finite dimensional $K$-vector space,}\\
					\rho_1 &:&\text{$H^1_\an(A, \O)^*\to \underline{\Aut}(W)$ algebraic representation,}\\
					\rho_2 &:& \text{$H^0(A, \wtOm^1)^* \to  \underline{\Aut}(W)$ algebraic representation}\\
					&&\text{ such that $\rho_1$ and $\rho_2$ commute}
				\end{array}\right\}}   \arrow[d,"(c)"] \\
			{\left\{(W,\rho_1,\theta)\middle| \begin{array}{@{}l@{}l @{\:}l}W &:&\text{finite dimensional $K$-vector space,}\\
					\rho_1 &:&\text{$H^1_\an(A, \O)^*\to \underline{\Aut}(W)$ algebraic representation,}\\
					\theta &:& H^0(A, \wtOm^1)^* \to \underline{\End}(W)  \text{ $K$-linear map whose image consists of}\\
					&&\text{pairwise commuting nilpotent matrices that commute with $\rho_1$}
				\end{array}\right\}}\arrow[d,"(d)"] \\
			{\left\{ \text{unipotent Higgs bundles on $A$}\right\}}
		\end{tikzcd}\]
		
		where $(a)$, $(c)$ and $(d)$ are canonical and $(b)$ depends  on the splitting of $\HT$.
		
		The equivalence $(a)$ is \cref{rslt:characterize-tau-unipotent-VB-via-algebraic-reps}.
		
		The equivalence $(b)$ is given by the splitting of $\HT$ on $A$ which induces an isomorphism
		\[ H^1_v(A, \O)^*=H^1_\an(A, \O)^*\oplus H^0_\an(A, \wtOm^1)^*\]
		of $K$-vector spaces. In particular, an algebraic representation of the associated vector group on the left is the same as commuting representations of the two factors on the right.
		
		The equivalence (c) is given by
		\begin{align*}
			(W, \rho_1, \rho_2) \mapsto (W, \rho_1, \theta := \log \comp \rho_2),
		\end{align*}
		where $\log$ denotes the matrix logarithm from \cref{rslt:matrix-log-exp}.  Then $\theta\colon H^0(A, \wt\Omega^1)^* \to \underline{\End}(W)$ is an algebraic map and in particular $K$-linear, so $(V, \rho_1, \theta)$ is indeed an element of the right hand side. The functor thus constructed is an equivalence because it admits a quasi-inverse
		\begin{align*}
			(W, \rho_1, \theta) \mapsto (W, \rho_1, \rho_2 := \exp \comp \theta).
		\end{align*}
		Here the commutativity conditions are equivalent to each other since  by Lemma~\ref{rslt:matrix-log-exp}.(iii), two unipotent matrices $u$ and $u'$ commute if and only if $u$ and $n:=\log(u')$ commute.
		
		It remains to construct the equivalence $(d)$. Starting with a triple $(W, \rho_1, \theta)$, we see that the pair $(W, \rho_1)$ corresponds to a unipotent analytic vector bundle $E$ on $A$ by \cref{rslt:characterize-tau-unipotent-VB-via-algebraic-reps}. The condition that $\theta$ and $\rho_1$ commute means precisely that $\theta$ can equivalently be seen as a $K$-linear map $\theta\colon H^0(A, \wtOm^1)^* \to \End(E)$. As  $\wtOm^1$ is free, we have
		\begin{align*}
			\theta \in \Hom(H^0(A, \wtOm^1)^*, \End(E)) = H^0(A, \IEnd(E) \tensor \wtOm^1).
		\end{align*}
		The condition that the image of $\theta$ consists of pairwise commuting matrices is equivalent to $\theta \wedge \theta = 0$. Thus $(E, \theta)$ is a Higgs bundle on $A$. Finally, $\theta$ consisting of nilpotent matrices is equivalent to $(E, \theta)$ being unipotent, as desired.
	\end{proof}
	
	\begin{proof}[Proof of \cref{rslt:Simpson-correspondence}]
		By \cref{rslt:profet-vb-vs-Reps} we are reduced to finding an equivalence of pro-finite-\'etale $v$-bundles and pro-finite-\'etale Higgs bundles. By \cref{rslt:Simpson-linebundles}, the choice of exponential and of a splitting of $\HT$ induces a bijection between the sets of isomorphism classes of pro-finite-\'etale $v$-line bundles on $A$ and pro-finite-\'etale Higgs line bundles on $A$, respectively. Thus by \cref{rslt:decomposition-of-profet-vb-category} and \cref{rslt:decomposition-of-profet-Higgs-category} we are reduced to finding an equivalence of unipotent $v$-bundles and Higgs bundles on $A$. This is \cref{rslt:Simpson-correspondence-for-unipotent}.
	\end{proof}
	
	\section{Open questions}
	We end this paper with some open questions to which we hope to return in the future.
	\begin{enumerate}
		
		\item We do not currently know if the property of an analytic vector bundle on a smooth proper rigid space to be pro-finite-\'etale can be expressed in more classical terms.  As explained in Remark~\ref{remark:nfr-vs-profet}, the pro-finite-\'etale bundles on an abeloid $A$ of good reduction over $\C_p$  are precisely those with numerically flat reduction. However, over more general base fields, being pro-finite-\'etale is stronger.
		
		\item For abelian varieties over $\C_p$, Remark \ref{rem:homogeneous} says that the pro-finite-\'etale analytic vector bundles are precisely the homogeneous ones. Is this also true for abeloids?  
		\item One natural question is whether our $p$-adic Corlette--Simpson correspondence can be upgraded to an isomorphism of moduli spaces, say of $v$-stacks. This is possible in the case of line bundles \cite{heuer-diamantine-Picard}, and it seems that similar methods should apply here.
		
		\item Can we extend our correspondence to the case of not necessarily pro-finite-\'etale bundles, e.g. to a correspondence between Higgs bundles and $v$-vector bundles?

	\end{enumerate}

	\bibliographystyle{alpha}
	\bibliography{bibliography.bib}

	\noindent	
	Lucas Mann \hfill Ben Heuer \\
	Mathematisches Institut \hfill Mathematisches Institut \\
	Universit\"at Bonn \hfill   Universit\"at Bonn \hfill 
	\\
	Endenicher Allee 60 \hfill 
	Endenicher Allee 60 \hfill
	\\
	53115 Bonn \hfill 
	53115 Bonn \hfill 
	\\
	mannluca@math.uni-bonn.de \hfill heuer@math.uni-bonn.de
	\\[2ex]
	\noindent
	Annette Werner\\
	Institut f\"ur Mathematik \\
	Goethe-Universit\"at Frankfurt\\
	Robert-Mayer-Str. 6-8\\
	60325 Frankfurt am Main\\
	werner@math.uni-frankfurt.de
\end{document}